\newcommand{\chessboardwithcaptions}[9]{%
\begin{figure}[#1]
{\centering

\ifx&#2&\else \captionof{table}{\label{#2}}\fi

\ifx&#4&\else \noindent #4 \fi

\ifx&#5&\else \noindent #5 \fi

\ifx&#6&\else \noindent {\footnotesize #6} \fi

\begin{tabular}{lr}
\multicolumn{2}{c}{\chessboard[marginwidth=0pt,margintopwidth=0.2em,label=false,showmover,moversize=0.5em, #3]} \\
\ifx&#6&\ifx&#8&\else {\footnotesize #7} & {\footnotesize #8} \\ \fi \else {\footnotesize #7} & {\footnotesize #8} \\ \fi
\ifx&#9&\else \multicolumn{2}{c}{{\footnotesize #9}} \\ \fi
\end{tabular}

}
\end{figure}
}
\footnotesize\hspace{0.3em}\thecontentspage}%
\bfseries\contentslabel[\hfill\textsection\thecontentslabel.\hspace{0.3em}]{3em}}
\footnotesize\hspace{0.3em}\thecontentspage}%
    \definecolor{shadecolor}{gray}{0.90}
       \def\namedlabel#1#2{\begingroup
        #2%
        \def\@currentlabel{#2}%
        \phantomsection\label{#1}\endgroup
    }
    \newcommand{\word}[1]{\textit{#1}\index{#1}} 
    \newcommand{\invword}[1]{\index{#1}}
    \newcommand{\word}[1]{\textit{#1}} 
    \newcommand{\invword}[1]{}
    \DeclareMathAlphabet{\mathcal}{OMS}{cmsy}{m}{n} %
\newlist{enum_thm}{enumerate}{1}
\setlist[enum_thm,1]
{
    label=(\alph*),
    noitemsep,
    nolistsep,
    align=left,
    labelindent=\parindent,
    leftmargin=*
}
\newlist{enum_proof}{enumerate}{1}
\setlist[enum_proof,1]
{
    label=(\alph*),
    noitemsep,
    nolistsep,
    align=left,
    labelindent=\parindent,
    itemindent=*,
    leftmargin=0pt
    }
    \newtheoremstyle{std_style_theorem}%
        {8pt}%
        {8pt}%
        {\normalfont}%
        {0pt}%
        {\normalfont\bfseries}%
        {.}%
        { }%
        {}%
    \theoremstyle{std_style_theorem}
        \newtheorem{thm}{Theorem}
        \newtheorem{thm}{Theorem}[section]
        \newtheorem{thm}{Theorem}[subsection]
        \newtheorem{paran}[thm]{}
        \newtheorem{lemma}[thm]{Lemma}
        \newtheorem{prop}[thm]{Proposition}
        \newtheorem{cor}[thm]{Corollary}
        \newtheorem{defn}[thm]{Definition}
        \newtheorem{ass}[thm]{Assumption}
        \newtheorem{remark}[thm]{Remark}
        \newtheorem{ex}[thm]{Example}
         \newtheorem{exercise}[thm]{Exercise}
        \newtheorem{conjecture}[thm]{Conjecture}
        \newtheorem{question}[thm]{Question}
        \newtheorem{convention}[thm]{Convention}
        \newtheorem{notation}[thm]{Notation}
        \newtheorem{aim}[thm]{Aim}
        \newtheorem{refe}[thm]{Reference}
        \newtheorem{prob}[thm]{Problem}
        \newtheorem*{solution}{Solution}
        \newtheorem*{ack}{Acknowledgements}
        \newtheorem{fact}[thm]{Fact}
        \newtheorem{algo}[thm]{Algorithm}
        \newtheorem{paran}[thm]{}
        \newtheorem{lemma}[thm]{Lemma}
        \newtheorem{prop}[thm]{Proposition}
        \newtheorem{cor}[thm]{Korollar}
        \newtheorem{defn}[thm]{Definition}
        \newtheorem{ass}[thm]{Annahme}
        \newtheorem{remark}[thm]{Bemerkung}
        \newtheorem{conjecture}[thm]{Vermutung}
        \newtheorem{question}[thm]{Frage}
        \newtheorem{convention}[thm]{Konvention}
    \newtheoremstyle{std_style_theorem_nn}%
        {8pt}%
        {8pt}%
        {\normalfont}%
        {\parindent}%
        {\normalfont\bfseries}%
        {}%
        {0pt}%
        {}%
    \theoremstyle{std_style_theorem_nn}
    \newtheorem*{para}{}
    \newtheoremstyle{std_style_theorem_ni}%
        {8pt}%
        {8pt}%
        {\normalfont}%
        {0pt}%
        {\normalfont\bfseries}%
        {}%
        {0pt}%
        {}%
    \theoremstyle{std_style_theorem_ni}
    \newtheorem*{parani}{}
    \newtheoremstyle{std_style_theorem_nospace}%
        {0pt}%
        {0pt}%
        {\normalfont}%
        {0pt}%
        {\normalfont\bfseries}%
        {.}%
        { }%
        {}%
    \definecolor{light-gray}{gray}{0.9}
    \newenvironment{ass_boxed}
    {
        \begin{mdframed}[usetwoside=false,leftmargin=\the\parindent,%
            rightmargin=\the\parindent,skipabove=4pt,skipbelow=2pt,%
            backgroundcolor=light-gray]
            \begin{ass}
    }
    {
        \end{ass}\end{mdframed}
    }
    \newenvironment{thm_boxed}
    {
        \begin{mdframed}[usetwoside=false,leftmargin=\the\parindent,%
            rightmargin=\the\parindent,skipabove=4pt,skipbelow=2pt,%
            backgroundcolor=light-gray]
            \begin{thm}
    }
    {
        \end{thm}\end{mdframed}
    }
    \newenvironment{conjecture_boxed}
    {
        \begin{mdframed}[usetwoside=false,leftmargin=\the\parindent,%
            rightmargin=\the\parindent,skipabove=4pt,skipbelow=2pt,%
            backgroundcolor=light-gray]
            \begin{conjecture}
    }
    {
        \end{conjecture}\end{mdframed}
    }
    \newdimen\dbendwidth
    \newdimen\dbendindent
    \newtheoremstyle{std_style_theorem_danger}%
        {8pt}%
        {8pt}%
        {\normalfont\hangindent=2pc\hangafter=-2}%
        {0pt}%
        {\normalfont\bfseries}%
        {}%
        {0pt}%
        {}%
    \theoremstyle{std_style_theorem_danger}
        \renewcommand{\thechapter}{\Roman{chapter}}
        \titleformat{\chapter}[display]
        {\normalfont\filcenter\bfseries}
        {Chapter \thechapter}
        {0pc}
        {\Large
        \vspace{3ex}
        \titlerule \vspace{10pt}}
        [\vspace{8pt} \titlerule ]
        \titleformat{\chapter}[display]
        {\normalfont\filcenter\bfseries}
        {Kapitel \thechapter}
        {0pc}
        {\Large
        \vspace{3ex}
        \titlerule \vspace{10pt}}
        [\vspace{8pt} \titlerule ]
    \titlespacing{\chapter}{0pt}{0pt}{5ex}
    \renewcommand*\thesection{\arabic{section}}
    \titleformat{\section}{\center\normalfont\large\bfseries}{\textsection\thesection.}{.3em}{}
    \titlespacing{\section}{0pt}{14pt}{4pt}
    \titlespacing{\subsection}{0pt}{14pt}{0pt}
        \renewcommand{\thesubsection}{\arabic{section}\Alph{subsection}}
        \titleformat{\subsection}{\center\normalfont\bfseries}{\textsection\thesection\Alph{subsection}.}{.3em}{}
        \titleformat{\subsection}{\center\normalfont\bfseries}{\thesubsection.}{.3em}{}
    \DeclareMathOperator{\GL}{GL}
    \DeclareMathOperator{\SL}{SL}
    \DeclareMathOperator{\Ker}{Ker}
    \let\Im\undefined
    \DeclareMathOperator{\Im}{Im}
    \let\dim\undefined
    \DeclareMathOperator{\dim}{dim}
    \DeclareMathOperator{\id}{id}
    \newcommand{\subs}{\subseteq}
        \let\oldcommandsups\sups
        \let\sups\undefined        \newcommand{\sups}{\ifthenelse{\boolean{mmode}}{\subseteq}{\oldcommandsups}}
        \newcommand{\sups}{\supseteq}
    \newcommand{\dopgleich}{\mathrel{\mathop:}=}
    \newcommand{\gleichdop}{=\mathrel{\mathop:}}
    \newcommand{\ol}[1]{\overline{#1}}
    \newcommand{\wt}[1]{\widetilde{#1}}
    \newcommand{\rarr}{\rightarrow}
    \newcommand{\eps}{\varepsilon}
    \DeclareDocumentCommand{\cat}{m g g}
    {
        \IfNoValueTF{#2}
        {
            \IfNoValueTF{#3}
            {
                \mathsf{#1}
            }
            {
                \ensuremath{\mathsf{#1}_{#3}}
            }
        }
        {
            \IfNoValueTF{#3}
            {
                \ensuremath{{_{#2}}\mathsf{#1}}
            }
            {
                \ensuremath{{_{#2}}\mathsf{#1}_{#3}}
            }
        }
    }
    \newcounter{commcount}
    \newcommand{\mbi}[1]{\textbf{\em #1}}
    \newcommand{\mrm}[1]{\mathrm{#1}}
    \newcommand{\tn}[1]{\textnormal{#1}}
    \newcommand{\biq}{\mbi{q}}
    \newcommand{\biu}{\mbi{u}}
    \newcommand{\fm}{\mathfrak{m}}
    \newcommand{\rt}{\mathrm{t}}
    \newcommand{\rG}{\mathrm{G}}
    \newcommand{\rH}{\mathrm{H}}
    \newcommand{\rI}{\mathrm{I}}
    \newcommand{\rL}{\mathrm{L}}
    \newcommand{\rM}{\mathrm{M}}
    \newcommand{\rP}{\mathrm{P}}
    \newcommand{\rS}{\mathrm{S}}
    \newcommand{\rT}{\mathrm{T}}
    \newcommand{\rZ}{\mathrm{Z}}
    \newcommand{\sF}{\mathscr{F}}
    \newcommand{\sH}{\mathscr{H}}
    \newcommand{\sO}{\mathscr{O}}
    \newcommand{\bbC}{\mathbb{C}}
    \newcommand{\bbF}{\mathbb{F}}
    \newcommand{\bbN}{\mathbb{N}}
    \newcommand{\bbP}{\mathbb{P}}
    \newcommand{\bbQ}{\mathbb{Q}}
    \newcommand{\bbR}{\mathbb{R}}
    \newcommand{\bbZ}{\mathbb{Z}}
\begin{document}

\ifthenelse{\boolean{squeezedisplaymath}}
{
    \setlength{\abovedisplayshortskip}{\the\smallskipamount}%
    \setlength{\belowdisplayshortskip}{\the\smallskipamount}%
    \setlength{\abovedisplayskip}{\the\smallskipamount}%
    \setlength{\belowdisplayskip}{\the\smallskipamount}%
}
{}
\author{Ulrich Thiel\footnote{University of Stuttgart, \texttt{thiel at mathematik.uni-stuttgart.de}}}
\title{A counter-example to Martino's conjecture about generic Calogero--Moser families}
\maketitle
\pagestyle{pagenumonly}
\thispagestyle{empty}
\vspace{-12pt}
\begin{abstract}
The Calogero--Moser families are partitions of the irreducible characters of a complex reflection group derived from the block structure of the corresponding restricted rational Cherednik algebra. It was conjectured by Martino in 2009 that the generic Calogero--Moser families coincide with the generic Rouquier families, which are derived from the corresponding Hecke algebra. This conjecture is already proven for the whole infinite series $G(m,p,n)$ and for the exceptional group $G_4$. 
A combination of theoretical facts with explicit computations enables us to determine the generic Calogero--Moser families for the nine exceptional groups $G_4$, $G_5$, $G_6$, $G_8$, $G_{10}$, $G_{23}=H_3$, $G_{24}$, $G_{25}$, and $G_{26}$. We show that the conjecture holds for all these groups---except surprisingly for the group $G_{25}$, thus being the first and only-known counter-example so far.
\end{abstract}

{\centering
\begin{minipage}{0.87\textwidth}
{\small
\noindent \textbf{Remark.} The final version will appear in  \mbox{\textit{Algebras and Representation theory}} and is already available online at \url{http://dx.doi.org/10.1007/s10468-013-9449-4}.
}
\end{minipage}

}

\tableofcontents

\newpage
\section{Introduction}

\begin{parani}
For a complex reflection group $\Gamma \dopgleich (W,V)$ Etingof--Ginzburg \cite{EG-Symplectic-reflection-algebras} have introduced a family of $\bbC$-algebras, called \word{rational Cherednik algebras}. %
Starting with Gordon \cite{Gor-Baby-verma} interest in a certain canonical finite-di\-men\-sion\-al quotient, the so-called \word{restricted rational Cherednik algebra}, of the rational Cherednik algebra arose. The irreducible modules of these algebras are parametrized by the irreducible modules of $W$ and so the block structure yields a partition of them whose members are called the \word{Calogero--Moser families} of $\Gamma$. These families are known for the whole infinite series $G(m,p,n)$ of complex reflection groups and all parameters due to results by Gordon \cite{Gor-Baby-verma}, Gordon--Martino \cite{GorMar-Calogero-Moser-space-rest-0}, Bellamy \cite{Bel-The-Calogero-Moser-partit-0}, and Martino \cite{Martino.M11Blocks-of-restricted}. Furthermore, Bellamy \cite{Bel-Singular-CM} determined them generically for the exceptional group $G_4$ but there is almost nothing  known for the remaining exceptional groups.

Although the Calogero--Moser families are interesting objects per se, it is their  (conjectural) broader context which makes them so important. Namely, the achievement of Gordon--Martino \cite{GorMar-Calogero-Moser-space-rest-0} is much more than the sole determination of the Calogero--Moser families for $G(m,1,n)$ because they also conjectured for a Weyl group the existence of a natural bijection between the Calogero--Moser families and the families defined by the Kazhdan--Lusztig cells (the Lusztig families). They have proven this conjecture for type $B_2$ and collected further evidence. This gave a conjectural link between restricted rational Cherednik algebras and Hecke algebras, and so the question arose if there are further, more general, connections to Hecke algebras. The hope of investigating such connections is on the one hand to gain new information about objects attached to Hecke algebras, but on the other hand to generalize constructions from Weyl or Coxeter groups to all complex reflection groups. The main goals are to set up a cell theory for all complex reflection groups---Bonnaf\'e--Rouquier \cite{Bonnafe.C;Rouquier.R13Cellules-de-Calogero} recently made a big step in this direction using rational Cherednik algebras---and to make further progress in the spets program initiated by  Brou\'e--Malle--Michel \cite{Broue.M;Malle.G;Michel.J99Towards-spetses.-I, Broue.M;Malle.G;Michel.JSplit-Spetses-for-pr} whose aim is to find the analogs of finite groups of Lie type for a complex reflection group.
\end{parani}

\begin{para}
To begin studying such broad connections we do not yet need a cell theory, however, since a sensible extension of the Gordon--Martino conjecture to all complex reflection groups is to relate the Calogero--Moser families with the Rouquier families, which are also defined for  any complex reflection group due to Rouquier \cite{Rouquier.R99Familles-et-blocs-da}, Brou\'e--Kim \cite{BroKim02-Familles-de-cara}, Malle--Rouquier \cite{MalRou-Familles-de-caracteres-de-0}, and Chlouveraki \cite{Chl09-Blocks-and-famil}. This idea was already formulated by Gordon--Martino \cite{GorMar-Calogero-Moser-space-rest-0} but Martino \cite{Mar-CM-Rouqier-partition} presented a general conjecture---actually two conjectures: one for arbitrary parameters and a stronger one for generic parameters---relating these two families and proved it for the groups $G(m,1,n)$. Later Bellamy \cite{Bel-The-Calogero-Moser-partit-0} proved it for the whole infinite series $G(m,d,n)$ and generically for $G_4$.%
\end{para}

\begin{para}
In this article we start the first attack on Martino's conjecture for exceptional complex reflection groups. To this end, we essentially just use two theoretical arguments to be discussed in \S2. On the one hand, we introduce the notion of \word{supersingular} characters, which simply emphasizes a result already used by Bellamy \cite{Bel-Singular-CM}. On the other hand, we introduce the notion of \word{Euler families} and the \word{Euler variety} to get a quick approximation of the Calogero--Moser families. The explicit computations in \S3 will prove the main theorems \ref{bad_generic_euler_families_thm} and \ref{main_theorem_2} of this article which state that we can explicitly compute the generic Calogero--Moser families for all the groups
\[
G_4, G_5, G_6, G_8, G_{10}, G_{23} = H_3, G_{24}, G_{25}, G_{26} 
\]
and know some additional properties. A comparison with the generic Rouquier families computed by Chlouveraki \cite{Chlouveraki.MGAP-functions-for-th} shows that Martino's conjecture holds for all these groups---except surprisingly for the group $G_{25}$ which yields the first and only known counter-example to this conjecture. It is not really clear what the abstract reason for this failure is and if there are further counter-examples. This raises an important question: What is the actual relation between Calogero--Moser families and Rouquier families, and do Calogero--Moser families contain additional information about spetses?
\end{para}

\begin{ack}
I would like to thank C\'edric Bonnaf\'e for many discussions and sharing preliminary versions of his joint work \cite{Bonnafe.C;Rouquier.R13Cellules-de-Calogero} with Rapha\"el Rouquier---both leading to major clarifications of my thoughts. Also, I would like to thank Maurizio Martino for many discussions about his conjecture. Furthermore, I would like to thank Gwyn Bellamy, Meinolf Geck, and Gunter Malle for numerous helpful comments on a preliminary version of this article. Last but not least I would like to thank the (anonymous) reviewer of \textit{Algebras and Representation Theory} for a lot of detailed comments and helping me to improve this article. 

I was partially supported by the \textit{DFG Schwerpunktprogramm Darstellungstheorie 1388}. 
\end{ack}

\section{Theoretical basics}

\begin{parani}
In this paragraph we will recall and develop all necessary theoretical ingredients which are then used in the following paragraph to obtain the results.
\end{parani}

\subsection{Reflection groups}

\begin{paran} 
\label{crg_def}
We fix some notations around reflection groups and refer the reader to \cite{BMR-Complex-Reflection} for more details. By a \word{reflection group} we always mean a \word{finite irreducible complex reflection group} $\Gamma \dopgleich (W,V)$. This means that $W$ is a finite group acting faithfully and irreducibly on a finite-dimensional $\bbC$-vector space $V$ such that $W$ is generated by its \word{reflections} in $V$, which are those elements $s \in W$ whose fixed space is a hyperplane in $V$, i.e., $\dim \Ker(s-\id_V) = \dim V-1$. We denote the set of reflections of $\Gamma$ by $\mrm{Ref}(\Gamma)$ and we denote the set of reflection hyperplanes of reflections of $\Gamma$ by $\mrm{Hyp}(\Gamma)$. 

For $s \in \mrm{Ref}(\Gamma)$ we call a non-zero element $\alpha_s \in \Im(\id_V -s )$ a \word{root} of $s$ and we call an element $\alpha_s^\vee$ whose kernel is equal to the \word{reflection hyperplane} $H_s \dopgleich \Ker(\id_V -s)$ of $s$ a \word{coroot} of $s$. If $(\alpha_s,\alpha_s^\vee)$ is a \word{root-coroot pair} of $s$, then 
\[
s(x) = x - (1-\eps_s) \frac{\langle x,\alpha_s^\vee \rangle}{\langle \alpha_s,\alpha_s^\vee \rangle} \alpha_s
\]
for all $x \in V$, where $\eps_s$ is the unique non-trivial eigenvalue of $s$ and $\langle \cdot, \cdot \rangle$ is the canonical pairing between $V$ and its dual space $V^*$. 

The group $W$ acts canonically on both $\mrm{Ref}(\Gamma)$ and $\mrm{Hyp}(\Gamma)$. For $H \in \mrm{Hyp}(\Gamma)$ we denote by $W_H$ the pointwise stabilizer of $H$. The order $e_H$ of $W_H$ is independent of the orbit $\Omega_H$ of $H$ under $W$ and therefore we also denote it by $e_{\Omega_H}$. We now define $\boldsymbol{\Omega}(\Gamma)$ to be the set of all pairs $(\Omega,j)$ with $\Omega \in \mrm{Hyp}(\Gamma)/W$ and $1 \leq j \leq e_\Omega -1$. Then there is a canonical bijection between $\boldsymbol{\Omega}(\Gamma)$ and $\mrm{Ref}(\Gamma)/W$. 
\end{paran}

\begin{paran} \label{shephard_todd}
The reflection groups have been classified up to conjugacy by Shephard--Todd \cite{Shephard.G;Todd.J54Finite-unitary-refle} and were labeled by $G(m,p,n)$ for $m,p,n \in \bbN_{>0}$ satisfying $p \mid m$ and $(m,p,n) \neq (2,2,2)$, and by $G_{i}$ for $4 \leq i \leq 37$, where there are some redundancies in the series $G(m,p,n)$ however.

Explicit representatives of the \word{exceptional groups} $G_i$ for $4 \leq i \leq 37$ can be found for example in \cite{LT-Reflection-groups}, or in the \textsc{GAP3} computer algebra package \textsc{CHEVIE} (see \cite{CHEVIE-JM-4}) by using the command 
\texttt{ComplexReflectionGroup(i).matgens}, or in the computer algebra system \textsc{Magma} (see \cite{Magma-2.18-2}) by using the command \texttt{Shephard\-Todd(i)}. The realizations in \textsc{Magma} were implemented by Taylor and are those given in \cite{LT-Reflection-groups}. 

The character tables of the exceptional groups can quickly be computed in \textsc{Magma}, but they are also available in \textsc{CHEVIE}. The invariant degrees of the exceptional groups are listed in \cite[table D.3]{LT-Reflection-groups} and are also contained in \textsc{CHEVIE}. 
\end{paran}

\subsection{Rational Cherednik algebras}

\begin{paran}
For a reflection group $\Gamma \dopgleich (W,V)$ Etingof--Ginzburg \cite{EG-Symplectic-reflection-algebras} introduced a family of $\bbC$-algebras, called rational Cherednik algebras. Depending on the context different parameters are used for their definition, but all of them can be transformed into each other. We will use the same parameters here as used by Martino \cite{Mar-CM-Rouqier-partition}.

To this end, we choose for each $H \in \mrm{Hyp}(\Gamma)$ a non-zero element $\alpha_H \in V$ such that $\langle \alpha_H \rangle_{\bbC}$ is $W_H$-stable, and choose an element $\alpha_H^\vee \in V^*$ whose kernel is equal to $H$. Then $(\alpha_H,\alpha_H^\vee)$ is a root-coroot pair for any $s \in \mrm{Ref}(\Gamma)$ whose reflection hyperplane is equal to $H$, i.e., for any $s \in W_H \setminus \lbrace 1 \rbrace$, and the expression
\[
(x,y)_H \dopgleich \frac{ \langle x, \alpha_H^\vee \rangle \langle \alpha_H,y \rangle}{\langle \alpha_H,\alpha_H^\vee \rangle} = (x,y)_s
\]
does not depend on the choice of $s \in W_H \setminus \lbrace 1 \rbrace$ and on the choice of $(\alpha_H,\alpha_H^\vee)$.

For the parameter space of the rational Cherednik algebras we define the set $\ol{\boldsymbol{\Omega}}(\Gamma)$ as the set of all pairs $(\Omega,j)$ with $\Omega \in \mrm{Hyp}(\Gamma)/W$ and $0 \leq j \leq e_\Omega-1$. If $(k_{\Omega,j}) \in \bbC^{\ol{\boldsymbol{\Omega}}(\Gamma)}$, then we consider the indices $j$ of $k_{\Omega,j}$ always modulo $e_\Omega$.
\end{paran}

\begin{paran}
The \word{rational Cherednik algebra} $\rH_{t,k} \dopgleich \rH_{t,k}(\Gamma)$ for $\Gamma$ in a parameter $t \in \bbC$ and a parameter family $k \dopgleich (k_{\Omega,j}) \in \bbC^{\ol{\boldsymbol{\Omega}}(\Gamma)}$ is now the quotient of the algebra $\rT(V \oplus V^*) \rtimes \bbC W$, with $\rT(-)$ denoting the tensor algebra, by the ideal generated by the relations
\[
\lbrack x,x' \rbrack = 0 = \lbrack y,y' \rbrack \quad \tn{ for all } x,x' \in V, \; y,y' \in V^*
\]
and
\[
\lbrack x,y \rbrack = t \cdot \langle x,y \rangle + \sum_{H \in \mrm{Hyp}(\Gamma)} (x,y)_H \sum_{s \in W_H \setminus \lbrace 1 \rbrace} \left( \sum_{j=0}^{e_{H}-1} \det(s)^{j} (k_{\Omega_H,j+1}-k_{\Omega_H,j}) \right) s
\]
for $x \in V$ and $y \in V^*$. We will denote the coefficient of $s$ in the above commutator relation by $c_k(s)$.
\end{paran}

\begin{paran}
The first important result about rational Cherednik algebras is the so-called \word{Poincar\'e--Birkhoff--Witt theorem} (\word{PBW theorem} for short) proven in \cite{EG-Symplectic-reflection-algebras} which states that there is a canonical isomorphism
\[
\rS(V) \otimes_{\bbC} \bbC W \otimes_{\bbC} \rS(V^*) \cong \rS(V \oplus V^*) \otimes_{\bbC} \bbC W \cong \rH_{t,k}
\]
of $\bbC$-vector spaces, where $\rS(-)$ denotes the symmetric algebra.
\end{paran}

\subsection{Restricted rational Cherednik algebras}

\begin{paran}
The PBW theorem shows that $\rH_{t,k}$ is an infinite-dimensional $\bbC$-vector space. Brown--Gordon \cite{Brown.K;Gordon.I03Poisson-orders-sympl} have proven that the center of $\rH_{t,k}$ is equal to $\bbC$ whenever $t\neq0$ but that the center of $\rH_{0,k}$ is so large that $\rH_{0,k}$ is even a finite module over it. In fact, Etingof--Ginzburg \cite[Proposition 4.15]{EG-Symplectic-reflection-algebras} (see also Gordon \cite{Gor-Baby-verma}) have proven that the ideal $\fm_k$ of $\rS(V) \otimes_\bbC \rS(V^*)$ generated by
\[
(\rS(V)^W \otimes_{\bbC} \rS(V^*)^W)_+ = \rS(V)^W_+ \otimes_{\bbC} \rS(V^*)^W + \rS(V)^W \otimes_{\bbC} \rS(V^*)_+^W \;,
\]
where $(-)_+$ denotes the ideal generated by the elements of positive degree, maps under the PBW isomorphism into the center of $\rH_{0,k}$. Induced by this isomorphism we thus get an isomorphism
\[
\rS(V)_W \otimes_{\bbC} \bbC W \otimes_{\bbC} \rS(V^*)_W = (\rS(V) \otimes_{\bbC} \bbC W \otimes_{\bbC} \rS(V^*)) /\fm_{k} \cong \rH_{0,k}/\fm_k \rH_{0,k} \gleichdop \ol{\rH}_{k} 
\]
of $\bbC$-vector spaces, where 
\[
\rS(V)_W \dopgleich \rS(V)^W/\rS(V)^W_+ \quad \textnormal{and} \quad \rS(V^*)_W \dopgleich \rS(V^*)^W/\rS(V^*)^W_+
\]
are the \word{coinvariant algebras}. As the coinvariant algebras are of dimension $|W|$, the $\bbC$-algebra $\ol{\rH}_{k}$ is of dimension $|W|^3$. It is called the \word{restricted rational Cherednik algebra} of $\Gamma$ in $k$. %
\end{paran}

\begin{paran}
The following basic representation theoretic properties of $\ol{\rH}_k$ are due to Gordon \cite{Gor-Baby-verma}. First, note that $\ol{\rH}_{k}$ is $\bbZ$-graded by putting $V$ in degree $1$, $\bbC W$ in degree $0$ and $V^*$ in degree $-1$. Then $(\ol{\rH}_{k})_{\leq 0} \cong \rS(V^*) \rtimes \bbC W$ is a graded subalgebra of $\ol{\rH}_{k}$ and as $\bbC W$ is a quotient of this algebra, we can inflate $\bbC W$-modules to $(\ol{\rH}_{k})_{\leq 0}$-modules. For each irreducible character $\lambda \in \Lambda \dopgleich \mrm{Irr}(\bbC W)$ we can now define the corresponding \word{Verma module} $\rM_{k}(\lambda) \dopgleich \ol{\rH}_{k} \otimes_{(\ol{\rH}_{k})_{\leq 0} } \lambda$. We clearly have $\rM_k(\lambda) \cong \rS(V)_W \otimes_\bbC \lambda \cong \bbC W \otimes_\bbC \lambda$ as $\bbC W$-modules so that in particular $\dim_\bbC \rM_k(\lambda) = |W| \cdot \dim_\bbC \lambda$. Due to \cite{Gor-Baby-verma} the Verma module $\rM_k(\lambda)$ is an indecomposable graded $\ol{\rH}_{k}$-module with simple head $\rL_{k}(\lambda)$ and $(\rL_k(\lambda))_{\lambda \in \Lambda}$ is a system of representatives of the isomorphism classes of simple $\ol{\rH}_k$-modules. As any simple $\ol{\rH}_k$-module lies in a unique block, the block structure of $\ol{\rH}_k$ yields a partition of $\Lambda$ whose members are called the \word{Calogero--Moser $k$-families}. We denote this partition by $\mrm{CM}_k$. %
\end{paran}

\begin{paran}
For more details about the following we refer to \cite{Bonnafe.C;Rouquier.R13Cellules-de-Calogero} and \cite{Thiel.UOn-restricted-ration}. The Calogero--Moser families have the property that there exists a non-empty Zariski open subset $\mrm{CMGen}(\Gamma)$ of the parameter space $\bbC^{\ol{\boldsymbol{\Omega}}(\Gamma)}$ on which the Calogero-Moser families have their finest possible form. We will call these families---which are independent of the particular choice of parameters contained in $\mrm{CMGen}(\Gamma)$---the \word{generic Calogero--Moser families} of $\Gamma$. In fact, the generic Calogero--Moser families are precisely the Calogero--Moser $k$-families when considering the parameters $k$ as being algebraically independent. The Calogero--Moser $k$-families for \textit{exceptional} parameters $k$, i.e., for $k$ in the complement $\mrm{CMEx}(\Gamma)$ of $\mrm{CMGen}(\Gamma)$, are then unions of the generic Calogero--Moser families. It is still an open and important problem to understand $\mrm{CMEx}(\Gamma)$ explicitly for the exceptional groups. In \S3 we will present new results in this direction. 

\end{paran}

\subsection{Martino's conjecture}

\begin{paran}
We now come to Martino's conjecture whose historical context we already outlined in the introduction. To state the conjecture in a precise form, we will first quickly recall the definition of Rouquier families. The main references for this are \cite{BroKim02-Familles-de-cara}, \cite{MalRou-Familles-de-caracteres-de-0}, and \cite{Chl09-Blocks-and-famil} but we also want to emphasize the discussion in \cite[chapter 2]{Bonnafe.C;Rouquier.R13Cellules-de-Calogero}, which we will review here. This approach is more general than the one in \cite{Chl09-Blocks-and-famil} but everything works analogously.
\end{paran}

\begin{paran}
Let $\sO$ be the character value ring of $W$, i.e., the subring of $\bbC$ generated by the values of the irreducible characters of $W$, and let $K$ be its quotient field. Let $\biq \dopgleich (\biq_{\Omega,j})_{(\Omega,j) \in \ol{\boldsymbol{\Omega}}(\Gamma)}$ be a family of algebraically independent elements over $\sO$ and let $\sO \lbrack \biq^{\pm 1} \rbrack \dopgleich \sO \lbrack \biq,\biq^{-1} \rbrack$ be the ring of Laurent polynomials in $\biq$ over $\sO$. Let $\sH \dopgleich \sH(\Gamma)$ be the \word{generic Hecke algebra} of $\Gamma$. This is the quotient of $\sO \lbrack \biq^{\pm 1} \rbrack B(\Gamma)$, where $B(\Gamma)$ is the braid group of $\Gamma$, by the ideal generated by the relations
\[
\prod_{j=0}^{e_H-1}\left( \sigma_H - \zeta_{e_H}^j \biq_{\Omega_H,j}^{|\mu(K)|} \right) 
\]
for all $H \in \mrm{Hyp}(\Gamma)$, where $\mu(K)$ is the group of all roots of unity in $K$ and $\sigma_H$ is the generator of the monodromy around $H$ (see \cite{BMR-Complex-Reflection}). Usually, the generic Hecke algebra is considered over the subring $\sO\lbrack \biu^{\pm 1} \rbrack$ or $\bbZ \lbrack \biu^{\pm 1} \rbrack$ of $\sO \lbrack \biq^{\pm 1} \rbrack$, where $\biu \dopgleich (\biu_{\Omega,j})_{(\Omega,j) \in \ol{\boldsymbol{\Omega}}(\Gamma)}$ with $\biu_{\Omega,j} \dopgleich \zeta_{e_\Omega}^j \biq_{\Omega,j}^{|\mu(K)|}$, but the definition here is more convenient for our purposes. 
\end{paran}

\begin{paran}
For the actual work with Hecke algebras one normally wants it to satisfy the following  properties:
\begin{enumerate}[leftmargin=4em,nosep]
\item[(HF)] $\sH$ is a free $\sO\lbrack \biq^{\pm 1} \rbrack$-module of rank $|W|$.
\item[(HS)] There exists a symmetrizing form on $\sH$ satisfying several properties as explained in \cite[4.2.3(1-3)]{Chl09-Blocks-and-famil} (see also \cite[2.60]{Broue.M;Malle.G;Michel.JSplit-Spetses-for-pr}).
\end{enumerate}
These conjectures are known to hold for all Coxeter groups by the classical theory in this case (see \cite[IV, \S2, exercise 23]{Bou-Lie-4-6}). Furthermore, they are known to hold for all complex reflection groups of type $G(m,p,n)$ by \cite[1.17 and the preceding comment]{Broue.M;Malle.G;Michel.J99Towards-spetses.-I}, \cite[theorem 5.1]{Malle.G;Mathas.A98Symmetric-cyclotomic}, and \cite[\S4]{Geck.M;Iancu.L;Malle.G00Weights-of-Markov-tr}. Unfortunately, it seems that there is not much known about these conjectures for non-Coxeter exceptional groups. In \cite[4.7]{Broue.M;Malle.G93Zyklotomische-Heckea} the assumption (HF) is proven for the groups $G_4$, $G_5$, $G_{12}$, and $G_{25}$---but recently Marin has pointed out in \cite{Marin.I12The-freeness-conject} that these proofs might contain a questionable argument. Furthermore, in \cite{Malle.G;Michel.J10Constructing-represe} it is mentioned that (HF) and (HS) have been confirmed by Müller computationally in several cases---but unfortunately this work is not published. It is thus hard to say for which non-Coxeter exceptional groups we can believe that the conjectures hold. Because of this we will, as in \cite{Marin.I12The-freeness-conject}, be careful and consider the cases treated in \cite{Broue.M;Malle.G93Zyklotomische-Heckea} and \cite{Malle.G;Michel.J10Constructing-represe} not as cases where the conjectures are known to hold. Marin has recently proven in \cite{Mar-The-cubic-Hecke-algebra-o-0} however that (HF) holds for the groups $G_4$, $G_{25}$, and $G_{32}$, and there is hope that future work will solve more cases.
\end{paran}

\begin{ass_boxed}
We assume from now on that the properties (HF) and (HS) are satisfied for $\Gamma$.
\end{ass_boxed}

\begin{paran}
It follows from \cite[5.2]{Malle.G99On-the-rationality-a} that the algebra $\ol{\sH} \dopgleich K(\biq) \otimes_{\sO \lbrack \biq^{\pm 1} \rbrack} \sH$ is split. Since this algebra admits a specialization to the (split semi-simple) group algebra $K W$ via $\biq_{\Omega,j} \mapsto 1$, an application of Tits's deformation theorem \cite[7.4.6]{GP-Coxeter-Hecke} shows that $\ol{\sH}$ is also semisimple and that we have induced by this specialization a bijection between the simple $\ol{\sH}$-modules and the simple $KW$-modules.
\end{paran}

\begin{paran}
We write the group ring $\sO\lbrack  \bbC  \rbrack$ of the abelian group $\bbC$ in exponential notation as $\sO \lbrack \biq^{\bbC} \rbrack$ with basis elements $\biq^\alpha$ for $\alpha \in \bbC$ and multiplication $\biq^\alpha \cdot \biq^{\beta} = \biq^{\alpha+\beta}$. %
Since $\bbC$ is a torsion-free abelian group, it can be ordered due to a theorem by Levi (see \cite[theorem 6.31]{Lam-Noncommutative}) and now it is a standard fact (see \cite[theorem 6.29]{Lam-Noncommutative}) that the group ring $\sO \lbrack \biq^{\bbC} \rbrack = \sO \lbrack \bbC \rbrack$ is an integral domain. We denote its quotient field by $K(\biq^\bbC)$. Bonnafé--Rouquier have proven in \cite[chapter 2]{Bonnafe.C;Rouquier.R13Cellules-de-Calogero} that $\sO \lbrack \bbR \rbrack$ is integrally closed but their proof works word-for-word also for $\bbC$ instead of $\bbR$, so $\sO \lbrack \biq^\bbC \rbrack$ is integrally closed.

 If $k \dopgleich (k_{\Omega,j}) \in \ol{\boldsymbol{\Omega}}(\Gamma)$ is a family of elements of $\bbC$, we define the \word{$k$-cyclotomic specialization} as the morphism $\Theta_k^{\mrm{cyc}}: \sO \lbrack \biq^{\pm 1} \rbrack \rarr \sO \lbrack \biq^{\bbC} \rbrack$, $\biq_{\Omega,j} \mapsto \biq^{k_{\Omega,j}}$, and call the algebra $\sH_k^{\mrm{cyc}} \dopgleich (\Theta_k^{\mrm{cyc}})^* \sH = \sO \lbrack \biq^\bbC \rbrack \otimes_{\sO \lbrack \biq^{\pm 1}  \rbrack} \sH$ the \word{$k$-cyclotomic Hecke algebra} of $\Gamma$. The splitting result above and results by Chlouveraki \cite{Chl09-Blocks-and-famil} show that $\ol{\sH}_k^{\mrm{cyc}} \dopgleich K( \biq^{\bbC}) \otimes_{\sO \lbrack \biq^\bbC \rbrack} \sH_k^{\mrm{cyc}}$ is split semi-simple (see \cite[chapter 2]{Bonnafe.C;Rouquier.R13Cellules-de-Calogero} for a precise argument). We thus have induced by specializations bijections between the simple $\ol{\sH}$-modules, the simple $\ol{\sH}_k^{\mrm{cyc}}$-modules and the simple $KW$-modules. 

Now, let $\sO^{\mrm{cyc}}\lbrack \biq^{\bbC} \rbrack$ be the sub $\sO\lbrack \biq^\bbC \rbrack$-algebra of $K(\biq^\bbC)$ generated by the elements $(1-\biq^r)^{-1}$ for $r \in \bbC^\times$. This ring is called the \word{Rouquier ring}. The blocks of the subalgebra $\sO^{\mrm{cyc}}\lbrack \biq^{\bbC} \rbrack \otimes_{\sO\lbrack \biq^\bbC \rbrack} \sH_k^{\mrm{cyc}}$ of $\ol{\sH}_k^{\mrm{cyc}}$ partition the simple $\ol{\sH}_k^{\mrm{cyc}}$-modules---this is a coarser partition than the one induced by the blocks of $\ol{\sH}_k^{\mrm{cyc}}$ itself---and thus the simple $KW$-modules. The families of the simple $KW$-modules obtained in this way are called the \word{Rouquier $k$-families}. 
\end{paran}

\begin{paran}
The parameter dependent behavior of Rouquier families was analyzed by Chlouveraki in \cite{Chl09-Blocks-and-famil} using the notion of essential hyperplanes. The essential hyperplanes are hyperplanes in the parameter space $\bbC^{\ol{\boldsymbol{\Omega}}(\Gamma)}$ of the Rouquier families such that their union $\mrm{RouGen}(\Gamma)$ has a similar property as $\mrm{CMGen}(\Gamma)$, namely away from these hyperplanes the Rouquier families are as fine as possible and independent of the particular parameters. We will call them also the \word{generic Rouquier families}. For parameters in the complement $\mrm{RouEx}(\Gamma)$ of $\mrm{RouGen}(\Gamma)$ some of the generic Rouquier blocks will fuse.%

To review the definition of essential hyperplanes, we first define an \word{essential monomial} of $\Gamma$ to be a monomial $M$ in the variables $\biq_{\Omega,j}$ such that there is a simple $\ol{\sH}$-module whose Schur element contains a factor of the form $\Psi(M)$, where $\Psi$ is a $K$-cyclotomic polynomial with $\Psi(1)$ not invertible in $\sO$. If $M = \prod_{(\Omega,j) \in \ol{\boldsymbol{\Omega}}(\Gamma)} \biq_{\Omega,j}^{n_{\Omega,j}}$ is such a monomial and $k \in \bbC^{\ol{\boldsymbol{\Omega}}(\Gamma)}$, then under the $k$-cyclotomic specialization it specializes to $\biq^{\sum_{(\Omega,j) \in \ol{\boldsymbol{\Omega}}(\Gamma)} n_{\Omega,j} k_{\Omega,j}} \in \sO\lbrack \biq^{\bbC} \rbrack$. The hyperplane in $\bbC^{\ol{\boldsymbol{\Omega}}(\Gamma)}$ defined by $\sum_{(\Omega,j) \in \ol{\boldsymbol{\Omega}}(\Gamma)} n_{\Omega,j} k_{\Omega,j}$ is called the \word{essential hyperplane} associated with $M$. The \word{essential hyperplanes} of $\Gamma$ are now the essential hyperplanes of the essential monomials of $\Gamma$.

Chlouveraki has determined in \cite{Chl09-Blocks-and-famil} all essential hyperplanes and all Rouquier families of all reflection groups. The results for the exceptional groups are contained in the \textsc{CHEVIE} package \cite{Chlouveraki.MGAP-functions-for-th} and can be obtained by using the command \texttt{DisplayAllBlocks(ComplexReflectionGroup(i))}.
\end{paran}

\begin{remark}
Our review of Rouquier families differs slightly from the original discussion in \cite{Chl09-Blocks-and-famil}. Here, only \textit{integral} parameters $k$ satisfying a certain Galois invariance property were allowed (see \cite[4.3.1]{Chl09-Blocks-and-famil}). But this restriction is not necessary at all and the results of \cite{Chl09-Blocks-and-famil} remain the same in the general case as everything is already defined over $\sO \lbrack \biq^{\bbZ} \rbrack$ and as the cyclotomic specializations already split over $K(\biq^{\bbZ})$ by \cite{Chl09-Blocks-and-famil}. We refer again also to \cite[chapter 2]{Bonnafe.C;Rouquier.R13Cellules-de-Calogero}, where this is reviewed for real parameters. 
\end{remark}

\begin{para}
We are finally ready to state Martino's conjecture. %
\end{para}

\begin{conjecture_boxed}[\word{Special parameter conjecture}, \cite{Mar-CM-Rouqier-partition}] \label{conjecture_special}
Let $k \dopgleich (k_{\Omega,j}) \in \bbC^{\ol{\boldsymbol{\Omega}}(\Gamma)}$ and define $k^\sharp \dopgleich (k_{\Omega,j}^\sharp) \in \bbC^{\ol{\boldsymbol{\Omega}}(\Gamma)}$ with $k_{\Omega,j}^\sharp \dopgleich k_{\Omega,-j}$. Then the Calogero--Moser $k$-families are unions of Rouquier $k^\sharp$-families.
\end{conjecture_boxed}
\vspace{6pt}
\begin{conjecture_boxed}[\word{Generic parameter conjecture}, \cite{Mar-CM-Rouqier-partition}] \label{conjecture_generic}
The generic Calogero--Moser families are equal to the generic Rouquier families.
\end{conjecture_boxed}
\vspace{5pt}

\begin{remark}
Our formulation of the conjectures is at first sight slightly different from the original formulation in \cite{Mar-CM-Rouqier-partition} but there is actually no difference as we will argue now.

First, Martino's original formulation of the special parameter conjecture only deals with \textit{integral} parameters---but, as apparent from \cite{Mar-CM-Rouqier-partition} and \cite{Martino.M11Blocks-of-restricted}, this was only due to the assumption on the integrality of the parameters for the Rouquier families in \cite{Chl09-Blocks-and-famil} and we have already argued that this was not necessary.

Second, the term \textit{generic} in Martino's original formulation of the generic parameter conjecture was not defined---it was just stated that for \textit{generic} parameters both families are equal. To be careful, we prove in the following lemma \ref{weak_generic_conjecture} that another more direct and seemingly stronger interpretation is equivalent to our formulation.
\end{remark}

\begin{lemma} \label{conjecture_zariski_stuff}
If there exists a non-empty Zariski open subset $U \subs \bbC^{\ol{\boldsymbol{\Omega}}(\Gamma)}$ such that the Calogero--Moser $k$-families remain the same for all $k \in U$, then these families are the generic Calogero--Moser families and so $U \subs \Phi^{-1}\mrm{CMGen}(\Gamma)$. A similar statement holds for Rouquier families.
\end{lemma}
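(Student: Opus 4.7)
The plan is to exploit the irreducibility of the parameter space $\bbC^{\ol{\boldsymbol{\Omega}}(\Gamma)}$ together with the defining property of $\mrm{CMGen}(\Gamma)$ recalled earlier in the paper. The essential observation is that in an affine space $\bbA^N$ any two non-empty Zariski open subsets meet, so I can test the family structure on $U$ against the family structure on $\mrm{CMGen}(\Gamma)$ at a common point.

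Concretely, I would argue as follows. First, recall from the earlier paragraph that $\mrm{CMGen}(\Gamma) \subs \bbC^{\ol{\boldsymbol{\Omega}}(\Gamma)}$ is a non-empty Zariski open subset and that on $\mrm{CMGen}(\Gamma)$ the Calogero--Moser $k$-families are all equal to a single partition of $\Lambda$, the generic Calogero--Moser families, and that for every $k \in \bbC^{\ol{\boldsymbol{\Omega}}(\Gamma)}$ the $k$-families are coarser than, or equal to, this generic partition. Now assume $U$ is a non-empty Zariski open subset on which the $k$-families are constant, equal to some partition $\mca{P}$ of $\Lambda$. Since $\bbC^{\ol{\boldsymbol{\Omega}}(\Gamma)}$ is irreducible, the intersection $U \cap \mrm{CMGen}(\Gamma)$ is non-empty; picking any $k_0$ in this intersection shows that $\mca{P}$ coincides at $k_0$ with the generic Calogero--Moser families, and hence $\mca{P}$ equals the generic partition.

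It remains to deduce $U \subs \Phi^{-1} \mrm{CMGen}(\Gamma)$. For every $k \in U$ the $k$-families equal the generic families, which is the finest partition that occurs. Combined with the fact that $\mrm{CMGen}(\Gamma)$ is characterized (up to the identification $\Phi$ between parameters and the point in the parameter variety used elsewhere in the paper) as precisely the locus where the $k$-families attain this finest form, we conclude $k \in \Phi^{-1} \mrm{CMGen}(\Gamma)$ for every $k \in U$. The analogous argument for Rouquier families works verbatim, using that $\mrm{RouGen}(\Gamma)$ is defined as the complement of a finite union of essential hyperplanes, hence is a non-empty Zariski open subset of $\bbC^{\ol{\boldsymbol{\Omega}}(\Gamma)}$, and that Rouquier $k$-families outside this locus are coarser than the generic Rouquier families.

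The only point that requires care---and that I would treat as the main (though very mild) obstacle---is the bookkeeping around the map $\Phi$ and the characterization of $\mrm{CMGen}(\Gamma)$ as the maximal open locus of constancy of the $k$-families. Once one checks that this maximality is equivalent to the finest-form description (which is immediate since a finite partition attaining its finest value on an open set cannot become strictly finer elsewhere), the containment $U \subs \Phi^{-1} \mrm{CMGen}(\Gamma)$ is automatic. No further input beyond the already quoted results on $\mrm{CMGen}(\Gamma)$ and $\mrm{RouGen}(\Gamma)$ is needed.
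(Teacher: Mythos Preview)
Your argument is correct and follows essentially the same approach as the paper: both exploit that $U$ and $\Phi^{-1}\mrm{CMGen}(\Gamma)$ are non-empty Zariski open subsets of the irreducible space $\bbC^{\ol{\boldsymbol{\Omega}}(\Gamma)}$, hence intersect, and then compare the family structures at a common point. You are in fact slightly more careful than the paper's own proof, which stops after identifying the families on $U$ with the generic ones and leaves the containment $U \subs \Phi^{-1}\mrm{CMGen}(\Gamma)$ implicit; your extra paragraph making this explicit via the finest-form characterization is a reasonable addition.
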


\begin{proof}
As $\Phi^{-1}\mrm{CMGen}(\Gamma) \subs \bbC^{\ol{\boldsymbol{\Omega}}(\Gamma)}$ is a non-empty Zariski open subset, the intersection $U \cap \Phi^{-1}\mrm{CMGen}(\Gamma)$ is non-empty and so the Calogero--Moser $k$-families for $k \in U$ are the generic Calogero--Moser families. The same argument applies to Rouquier families.
\end{proof}

\begin{lemma} \label{weak_generic_conjecture}
The following assertions are equivalent:
\begin{enum_thm}
\item \label{weak_generic_conjecture:a} The generic Calogero--Moser families are equal to the generic Rouquier families.
\item \label{weak_generic_conjecture:b} There exists a non-empty Zariski open subset $U \subs \bbC^{\ol{\boldsymbol{\Omega}}(\Gamma)}$ such that the Calogero--Moser $k$-families are equal to the Rouquier $k^\sharp$-families for all $k \in U$.
\end{enum_thm}
\end{lemma}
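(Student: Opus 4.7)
The plan is to observe that both conditions reduce to a simple statement about intersections of non-empty Zariski open subsets in the irreducible affine space $\bbC^{\ol{\boldsymbol{\Omega}}(\Gamma)}$. The key preparatory remark is that the sharp operation $\sharp: k \mapsto k^\sharp$ defined by $k^\sharp_{\Omega,j} \dopgleich k_{\Omega,-j}$ is a linear involution of $\bbC^{\ol{\boldsymbol{\Omega}}(\Gamma)}$, hence a homeomorphism for the Zariski topology. In particular $\sharp^{-1}\mrm{RouGen}(\Gamma)$ is a non-empty Zariski open subset whenever $\mrm{RouGen}(\Gamma)$ is, which we already know.

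For the direction \ref{weak_generic_conjecture:a} $\Rightarrow$ \ref{weak_generic_conjecture:b}, I would set
\[
U \dopgleich \mrm{CMGen}(\Gamma) \cap \sharp^{-1}\mrm{RouGen}(\Gamma).
\]
Since $\bbC^{\ol{\boldsymbol{\Omega}}(\Gamma)}$ is irreducible and $U$ is the intersection of two non-empty Zariski open subsets, $U$ is itself non-empty and Zariski open. For $k \in U$, the Calogero--Moser $k$-families coincide with the generic Calogero--Moser families by the definition of $\mrm{CMGen}(\Gamma)$, and similarly the Rouquier $k^\sharp$-families coincide with the generic Rouquier families because $k^\sharp \in \mrm{RouGen}(\Gamma)$. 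By hypothesis \ref{weak_generic_conjecture:a} these two generic partitions agree, so the Calogero--Moser $k$-families equal the Rouquier $k^\sharp$-families on all of $U$.

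For the direction \ref{weak_generic_conjecture:b} $\Rightarrow$ \ref{weak_generic_conjecture:a}, suppose such a $U$ is given. I would intersect it with the two genericity loci and set
\[
U' \dopgleich U \cap \mrm{CMGen}(\Gamma) \cap \sharp^{-1}\mrm{RouGen}(\Gamma),
\]
which is again non-empty by irreducibility of the ambient affine space. Pick any $k \in U'$. Then the Calogero--Moser $k$-families are, by choice of $k \in \mrm{CMGen}(\Gamma)$, the generic Calogero--Moser families; the Rouquier $k^\sharp$-families are, by $k^\sharp \in \mrm{RouGen}(\Gamma)$, the generic Rouquier families; and these two partitions coincide because $k \in U$. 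Hence the generic Calogero--Moser families equal the generic Rouquier families, giving \ref{weak_generic_conjecture:a}.

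There is no real obstacle: the entire argument rests on the irreducibility of $\bbC^{\ol{\boldsymbol{\Omega}}(\Gamma)}$ and on the fact that $\sharp$ is a Zariski homeomorphism. The only thing one has to be slightly careful about is that the Rouquier families move with the parameter $k^\sharp$, not $k$, so in assertion \ref{weak_generic_conjecture:b} the condition "families are the same for all $k \in U$" is not immediately available from the hypothesis alone; this is precisely why one invokes $\mrm{CMGen}(\Gamma)$ and $\sharp^{-1}\mrm{RouGen}(\Gamma)$ simultaneously rather than appealing to Lemma \ref{conjecture_zariski_stuff} directly.
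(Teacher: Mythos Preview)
Your proof is correct and follows the same idea as the paper: exploit irreducibility of $\bbC^{\ol{\boldsymbol{\Omega}}(\Gamma)}$ to intersect the given open with the two genericity loci. The differences are cosmetic. For \ref{weak_generic_conjecture:a}$\Rightarrow$\ref{weak_generic_conjecture:b} the paper works with the $\sharp$-symmetrized set $X = \Phi^{-1}\mrm{CMGen}(\Gamma) \cap (\Phi^{-1}\mrm{CMGen}(\Gamma))^\sharp \cap \mrm{RouGen}(\Gamma) \cap \mrm{RouGen}(\Gamma)^\sharp$, which is larger than necessary but convenient because it is $\sharp$-stable; your $U = \mrm{CMGen}(\Gamma) \cap \sharp^{-1}\mrm{RouGen}(\Gamma)$ is the minimal choice and works just as well. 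For \ref{weak_generic_conjecture:b}$\Rightarrow$\ref{weak_generic_conjecture:a} the paper appeals to Lemma~\ref{conjecture_zariski_stuff}, whose literal hypothesis (constancy of the families on $U$) is, as you correctly observe, not immediately verified by \ref{weak_generic_conjecture:b} alone; your direct intersection with $\mrm{CMGen}(\Gamma) \cap \sharp^{-1}\mrm{RouGen}(\Gamma)$ sidesteps this wrinkle and is cleaner, though of course the paper's intended argument is the same manoeuvre.
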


\begin{proof}
First note that the map $\sharp:\bbC^{\ol{\boldsymbol{\Omega}}(\Gamma)} \rarr \bbC^{\ol{\boldsymbol{\Omega}}(\Gamma)}$, $k \mapsto k^\sharp$, is an automorphism of order $2$ of the variety $\bbC^{\ol{\boldsymbol{\Omega}}(\Gamma)}$. As both $\Phi^{-1}\mrm{CMGen}(\Gamma)$ and $\mrm{RouGen}(\Gamma)$ are non-empty open subsets of $\bbC^{\ol{\boldsymbol{\Omega}}(\Gamma)}$, also
\[
X \dopgleich \Phi^{-1}\mrm{CMGen}(\Gamma) \cap (\Phi^{-1}\mrm{CMGen}(\Gamma))^\sharp \cap \mrm{RouGen}(\Gamma) \cap \mrm{RouGen}(\Gamma)^\sharp
\]
is a non-empty open subset of $\bbC^{\ol{\boldsymbol{\Omega}}(\Gamma)}$. This subset is furthermore $\sharp$-stable. 

Now, suppose that \ref{weak_generic_conjecture:a} holds. If $k \in X$, then $k \in \Phi^{-1}\mrm{CMGen}(\Gamma)$ so that the Calogero--Moser $k$-families are equal to the generic Calogero--Moser families. These are in turn by assumption equal to the generic Rouquier families. As $k^\sharp \in X$ and so $k^\sharp \in \mrm{RouGen}(\Gamma)$, the generic Rouquier families are equal to the Rouquier $k^\sharp$-families. Hence, the Calogero--Moser $k$-families are equal to the Rouquier $k^\sharp$-families for all $k \in X$, proving \ref{weak_generic_conjecture:b}.

Conversely, suppose that \ref{weak_generic_conjecture:b} holds. As $U$ is a non-empty open subset, it follows from \ref{conjecture_zariski_stuff} that the Calogero--Moser $k$-families are equal to the generic Calogero--Moser families for all $k \in U$. In the same way, using that $U^\sharp$ is also a non-empty open subset, the Rouquier $k^\sharp$-families are equal to the generic Rouquier families. This already shows \ref{weak_generic_conjecture:a}.
\end{proof}

\begin{para}
Although the generic parameter conjecture does not involve the explicit parameter correspondence $k \leftrightarrow k^\sharp$, it still implies that the special parameter conjecture holds for Zariski almost all parameters as the following lemma shows.
\end{para}

\begin{lemma}
If the generic Calogero--Moser families are unions of generic Rouquier families, then Martino's special parameter conjecture holds for all $k \in \Phi^{-1}\mrm{CMGen}(\Gamma)$.
\end{lemma}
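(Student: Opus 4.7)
The plan is to reduce the special parameter conjecture at an arbitrary CM-generic parameter to the hypothesis by combining Lemma~\ref{conjecture_zariski_stuff} with the fact that both Calogero--Moser and Rouquier families can only coalesce under specialization.

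First, I would fix $k \in \Phi^{-1}\mrm{CMGen}(\Gamma)$. By definition of the generic CM families, the CM $k$-family partition $\pi_k$ coincides with the generic CM partition $\pi_*$. The standing hypothesis says that $\pi_*$ is coarser than the generic Rouquier partition $\rho_*$, i.e.\ every $\pi_*$-class is a disjoint union of $\rho_*$-classes. On the other hand, since the Rouquier $k^\sharp$-families arise from $\rho_*$ by specialization along the essential hyperplanes through $k^\sharp$, each $\rho_*$-class is contained in a unique Rouquier $k^\sharp$-family; equivalently $\rho_*$ refines $\rho_{k^\sharp}$. Thus a $\pi_*$-class $F$ is always contained in the union of those Rouquier $k^\sharp$-families which meet it.

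The core of the argument is to promote this inclusion to an equality, that is, to show that if a Rouquier $k^\sharp$-family $C$ contains some $\rho_*$-class $B \subseteq F$, then $C \subseteq F$. To this end, I would pick a convenient test parameter $k_0$ in the non-empty Zariski open set
\[
\Phi^{-1}\mrm{CMGen}(\Gamma)\,\cap\,\sharp^{-1}\bigl(\Phi^{-1}\mrm{RouGen}(\Gamma)\bigr)\,,
\]
whose non-emptiness follows from Lemma~\ref{conjecture_zariski_stuff} applied on both sides together with the fact that $\sharp$ is an automorphism of order $2$ of $\bbC^{\ol{\boldsymbol{\Omega}}(\Gamma)}$. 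At such a $k_0$ one has $\pi_{k_0} = \pi_*$ and $\rho_{k_0^\sharp} = \rho_*$, so by the hypothesis the special parameter conjecture holds at $k_0$.

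The main obstacle is the deformation step: extending the conclusion from $k_0$ to every $k \in \Phi^{-1}\mrm{CMGen}(\Gamma)$. Here I would exploit that $\pi_k = \pi_*$ is constant on all of $\Phi^{-1}\mrm{CMGen}(\Gamma)$, while $\rho_{k^\sharp}$ is only a coarsening of $\rho_*$ when $k^\sharp$ leaves $\mrm{RouGen}(\Gamma)$. If two $\rho_*$-classes $B, B'$ were to fuse in $\rho_{k^\sharp}$ while lying in distinct $\pi_*$-classes $F \neq F'$, then in conjunction with the special conjecture already established at $k_0$ one gets a contradiction: the $\pi_*$-classes $F$ and $F'$ would each have to contain this common $\rho_{k^\sharp}$-class, violating disjointness of the partition. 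Hence any such fusion is confined to a single $\pi_*$-class, which is exactly the statement that $\rho_{k^\sharp}$ refines $\pi_* = \pi_k$. Equivalently, the CM $k$-families are unions of Rouquier $k^\sharp$-families, as required.
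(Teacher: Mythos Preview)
Your argument has a genuine gap in the final paragraph. You correctly set up the picture: for $k$ CM-generic one has $\pi_k = \pi_*$, the hypothesis gives $\pi_* \succeq \rho_*$, and semicontinuity gives $\rho_{k^\sharp} \succeq \rho_*$; the problem is that these two coarsenings of $\rho_*$ need not be comparable. Your proposed fix, however, is circular. The ``special conjecture already established at $k_0$'' is nothing but the statement $\pi_{k_0} \succeq \rho_{k_0^\sharp}$, and since you chose $k_0$ so that both sides are generic, this is literally $\pi_* \succeq \rho_*$ --- the hypothesis again. It says nothing whatsoever about the partition $\rho_{k^\sharp}$ at the \emph{different} parameter $k$. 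Concretely, if $B \subseteq F$ and $B' \subseteq F'$ with $F \neq F'$ are $\rho_*$-classes that fuse into one $\rho_{k^\sharp}$-class $C$, nothing you have written forces $C$ into either $F$ or $F'$; there is no contradiction. Indeed, with only the hypothesis and semicontinuity one cannot exclude an abstract configuration like $\rho_* = \{\{1\},\{2\},\{3\},\{4\}\}$, $\pi_* = \{\{1,2\},\{3,4\}\}$, $\rho_{k^\sharp} = \{\{1,3\},\{2,4\}\}$.

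For comparison, the paper's proof is a single chain: CM $k$-families $=$ generic CM families, these are unions of generic Rouquier families by assumption, ``and these are in turn unions of Rouquier $k^\sharp$-families''. The last link asserts that generic Rouquier families are unions of Rouquier $k^\sharp$-families, i.e.\ that the generic Rouquier partition is \emph{coarser} than the $k^\sharp$-specialized one --- the opposite of the semicontinuity direction you identified. So you were right to sense that this step needs care; the issue is that the auxiliary test parameter $k_0$ you introduce does not supply the missing comparison, and no amount of moving between $k$ and $k_0$ will, since the only input available at any generic point is the hypothesis itself.
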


\begin{proof}
Suppose that $k  \in \Phi^{-1}\mrm{CMGen}(\Gamma)$. Then the Calogero--Moser $k$-families are equal to the generic Calogero--Moser $k$-families, which are by assumption unions of generic Rouquier families and these are in turn unions of Rouquier $k^\sharp$-families. Hence, the Calogero--Moser $k$-families are unions of Rouquier $k^\sharp$-families and so
Martino's special parameter conjecture holds for $k$.%
\end{proof}

\begin{para}
The following theorem summarizes the current status of the conjecture.
\end{para}

\begin{thm}[\cite{GorMar-Calogero-Moser-space-rest-0}, \cite{Mar-CM-Rouqier-partition}, \cite{Bel-The-Calogero-Moser-partit-0}, \cite{Bel-Singular-CM}, \cite{Martino.M11Blocks-of-restricted}]
Martino's conjecture holds in its full form for the whole infinite series $G(m,p,n)$, and the generic parameter conjecture also holds for the exceptional group $G_4$. \qed
\end{thm}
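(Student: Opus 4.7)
The plan is to recognize that this theorem is purely a summary statement: it does not require any new argument but rather an attribution of existing results in the literature. The $\qed$ appended directly to the statement itself signals that the proof is by citation, so what I would produce is a road map saying which portion of each cited paper proves which portion of the claim, together with a verification that the parameter conventions are compatible.

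First, for the series $G(m,1,n)$ I would invoke Martino's original paper \cite{Mar-CM-Rouqier-partition}, in which both Conjecture~\ref{conjecture_special} and Conjecture~\ref{conjecture_generic} are established: the explicit description of the Calogero--Moser families in this case goes back to Gordon~\cite{Gor-Baby-verma} and Gordon--Martino~\cite{GorMar-Calogero-Moser-space-rest-0}, and Martino matches it to the Rouquier side by comparing the cyclotomic blocks of the corresponding Ariki--Koike--Hecke algebras. Next, for the full infinite series $G(m,p,n)$ with $p \mid m$ I would cite Bellamy~\cite{Bel-The-Calogero-Moser-partit-0} for the generic parameter conjecture, whose proof reduces from $G(m,p,n)$ to $G(m,1,n)$ by a Clifford-theoretic argument on the restricted Cherednik side, and Martino~\cite{Martino.M11Blocks-of-restricted} for the special parameter statement in the remaining cases, where a parallel reduction is carried out at the level of Rouquier blocks.

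For the exceptional group $G_4$, I would appeal to Bellamy~\cite{Bel-Singular-CM}: there the generic Calogero--Moser families of $G_4$ are computed directly (using precisely the rigidity-of-singular-modules input that \S 2 will abstract into the notion of supersingular characters), and then compared with the generic Rouquier families of $G_4$ available through Chlouveraki's data~\cite{Chlouveraki.MGAP-functions-for-th}; the two partitions match, giving Conjecture~\ref{conjecture_generic} for $\Gamma = G_4$.

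The only real obstacle is bookkeeping: each of these papers uses slightly different parameter conventions (some use the $k_{\Omega,j}$ above, some use the dual family $k^{\sharp}$, some work with the monodromy parameters $\biu_{\Omega,j}$ of the Hecke algebra), so I would need to verify that the $\sharp$-twist appearing in Conjecture~\ref{conjecture_special} is accounted for correctly in each quoted result and that the normalisation of the Rouquier families agrees with the one fixed in the present article. For the $G(m,p,n)$ case this amounts to a straightforward change of variables carried out already in the cited works, and for $G_4$ no twist is needed since the generic parameter conjecture is symmetric under~$\sharp$. Once these identifications are in place the theorem follows by concatenating the above citations, and there is nothing further to prove.
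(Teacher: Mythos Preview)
Your proposal is correct and matches the paper's approach: the theorem is stated with a \qed\ immediately following and no proof is given, precisely because it is a summary of the cited results. Your attribution roadmap is more detailed than what the paper provides (the paper simply lists the five citations in the theorem header and moves on), but the substance is the same.
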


\begin{para}
The aim of this article is to investigate Martino's conjecture for further exceptional groups. As the exceptional groups unfortunately do not have a coherent description as the groups $G(m,p,n)$, results in this direction will (so far) be less structural and more computational. However, the key ingredients allowing us to derive some results are of theoretical nature and we will now discuss them. 
\end{para}

\subsection{Dimensions of simple modules and Calogero--Moser families}

\begin{parani}
We will now review an important connection between the dimensions of simple $\ol{\rH}_k$-modules and Calogero--Moser $k$-families. %
\end{parani}

\begin{thm}[{\cite[1.7]{EG-Symplectic-reflection-algebras}}] \label{simple_module_dimension_bound}
For all $\lambda$ and all $k$ the estimate $
\dim_{\bbC} \rL_k(\lambda) \leq |W|$ holds. \qed
\end{thm}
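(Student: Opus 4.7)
The plan is to deduce the bound from general properties of the \emph{non-restricted} rational Cherednik algebra $\rH_{0,k}$, of which $\ol{\rH}_k = \rH_{0,k}/\fm_k \rH_{0,k}$ is a finite-dimensional quotient. Since any simple $\ol{\rH}_k$-module pulls back via the canonical surjection $\rH_{0,k} \twoheadrightarrow \ol{\rH}_k$ to a finite-dimensional simple $\rH_{0,k}$-module, it suffices to prove the stronger statement that every finite-dimensional simple $\rH_{0,k}$-module has $\bbC$-dimension at most $|W|$.

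To this end, I would invoke the structural results for $\rH_{0,k}$ established in \cite{EG-Symplectic-reflection-algebras}: the Satake isomorphism identifies the spherical subalgebra $e\rH_{0,k}e$ (with $e = |W|^{-1} \sum_{w \in W} w$) with the center $Z_k$ of $\rH_{0,k}$; the algebra $\rH_{0,k}$ is a finitely generated torsion-free module over $Z_k$; and $\rH_{0,k}$ is an Azumaya algebra of rank $|W|^2$ over a non-empty Zariski open subset of $\Spec(Z_k)$. In particular, $\rH_{0,k}$ is a prime noetherian polynomial identity algebra whose PI degree equals $|W|$: the upper bound follows by specialising at a generic central character, where $\rH_{0,k} \otimes_{Z_k} \bbC \cong \Mat_{|W|}(\bbC)$, while the lower bound is witnessed by the existence of such simple modules of dimension exactly $|W|$.

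The concluding step is then purely formal: by Kaplansky's theorem, any simple module over a $\bbC$-algebra with PI degree $n$ has $\bbC$-dimension at most $n$. Applied to $\rL_k(\lambda)$ viewed as a simple $\rH_{0,k}$-module, this yields $\dim_\bbC \rL_k(\lambda) \leq |W|$. The main obstacle is establishing the Azumaya/PI-degree claim: one has to exhibit enough simple $\rH_{0,k}$-modules of the maximal dimension $|W|$, which in the original argument is achieved by constructing polynomial-type representations via the Dunkl embedding and matching them with the simple modules supported on the smooth locus of the Calogero--Moser space $\Spec(Z_k)$.
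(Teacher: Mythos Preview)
The paper does not give its own proof of this theorem: it is stated with a terminal \qed\ and attributed to \cite[1.7]{EG-Symplectic-reflection-algebras}. Your proposal correctly reconstructs the argument that underlies that reference---pulling $\rL_k(\lambda)$ back to a finite-dimensional simple $\rH_{0,k}$-module, invoking that $\rH_{0,k}$ is a prime PI algebra of PI degree $|W|$ (via module-finiteness over $\rZ_k$ and the Azumaya property on the smooth locus), and concluding by the standard Kaplansky-type bound---so there is nothing to compare: you have supplied precisely the proof the paper chose to cite rather than reproduce.

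One small remark: what you call ``Kaplansky's theorem'' in the last step is really the standard consequence that for a prime PI $\bbC$-algebra any finite-dimensional simple module has dimension at most the PI degree (Kaplansky's theorem proper says a primitive PI ring is central simple over its center; combined with Schur's lemma over $\bbC$ and the fact that PI degree does not increase under quotients, this gives the bound you use). Your invocation is correct in substance.
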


\begin{para}
Although the following result is well-known and follows in conjunction with the above theorem from a characterization of the Azumaya points of the center of $\rH_k$ due to Brown (discussed by Gordon \cite[7.2]{Gor-Baby-verma}), we give here a simple isolated proof of this fact.
\end{para}

\begin{prop} \label{dim_less_non_trivial}
If $\dim_{\bbC} \rL_k(\lambda) < |W|$, then $\lambda$ lies in a non-singleton Calogero--Moser $k$-family.
\end{prop}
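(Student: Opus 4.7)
The plan is to prove the contrapositive: suppose the Calogero--Moser $k$-family of $\lambda$ is the singleton $\{\lambda\}$, and show that this forces $\dim_\bbC \rL_k(\lambda) = |W|$. Since $\rL_k(\lambda)$ is then the unique simple module of its block, every composition factor of the Verma module $\rM_k(\lambda)$ must be isomorphic to $\rL_k(\lambda)$. Writing $n \dopgleich [\rM_k(\lambda) : \rL_k(\lambda)]$ and counting $\bbC$-dimensions gives
\[
n \cdot \dim_\bbC \rL_k(\lambda) \;=\; \dim_\bbC \rM_k(\lambda) \;=\; |W| \cdot \dim_\bbC \lambda,
\]
so that $\dim_\bbC \rL_k(\lambda) = |W| \dim_\bbC \lambda / n$.

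Next, I would restrict the identity $[\rM_k(\lambda)] = n \cdot [\rL_k(\lambda)]$ in the Grothendieck group to $\bbC W$, which is justified because restriction along $\bbC W \hookrightarrow \ol{\rH}_k$ is exact. The paper already records that, using the PBW theorem and the Chevalley--Shephard--Todd isomorphism $\rS(V)_W \cong \bbC W$ of $\bbC W$-modules, one has $\rM_k(\lambda) \cong \bbC W \otimes_\bbC \lambda$ as $\bbC W$-modules with the diagonal action. A direct character calculation (or Frobenius reciprocity against the regular representation) then shows that the multiplicity of the trivial character satisfies $[\rM_k(\lambda) : 1_W]_W = \dim_\bbC \lambda$. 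Hence
\[
\dim_\bbC \lambda \;=\; n \cdot [\rL_k(\lambda) : 1_W]_W,
\]
and since $\dim_\bbC \lambda \ge 1$ the factor $[\rL_k(\lambda) : 1_W]_W$ must be a \emph{positive} integer, so $n \le \dim_\bbC \lambda$.

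To finish, I would invoke Theorem~\ref{simple_module_dimension_bound}: from $\dim_\bbC \rL_k(\lambda) = |W|\dim_\bbC \lambda/n \le |W|$ one obtains the reverse inequality $n \ge \dim_\bbC \lambda$. Combining both bounds forces $n = \dim_\bbC \lambda$ and therefore $\dim_\bbC \rL_k(\lambda) = |W|$, contradicting the hypothesis $\dim_\bbC \rL_k(\lambda) < |W|$. The only point that needs even a line of justification is the identification of the $\bbC W$-module structure of $\rM_k(\lambda)$, which is however precisely the observation recorded in the setup of the baby Verma modules; the rest is elementary bookkeeping with the Etingof--Ginzburg bound.
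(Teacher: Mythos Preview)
Your argument is correct and follows essentially the same route as the paper: both use that $[\rM_k(\lambda)] = n[\rL_k(\lambda)]$ in the singleton case, then compare the $\bbC W$-multiplicity of the trivial character in $\rM_k(\lambda) \cong \bbC W \otimes_\bbC \lambda$ (which equals $\dim_\bbC \lambda$) against the same multiplicity in $n \cdot \rL_k(\lambda)$. The only cosmetic difference is that the paper argues by direct contradiction (assuming $\dim_\bbC \rL_k(\lambda) < |W|$ to get $n > \dim_\bbC \lambda$, then observing $n \mid \dim_\bbC \lambda$), whereas you prove the clean contrapositive and invoke Theorem~\ref{simple_module_dimension_bound} explicitly to obtain the reverse inequality $n \geq \dim_\bbC \lambda$.
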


\begin{proof}
Suppose that $L \dopgleich \rL_k(\lambda)$ lies in a singleton Calogero--Moser $k$-family. Since $M \dopgleich \rM_k(\lambda)$ is indecomposable, we thus have $\lbrack M \rbrack = a \lbrack L \rbrack$ in the Grothendieck group $\rG_0(\ol{\rH}_k)$ for some $a \in \bbN_{>0}$. This relation implies that 
\[
a \dim_\bbC L = \dim_\bbC M  = |W| \cdot \dim_\bbC \lambda
\] 
and since we assumed $\dim_\bbC L < |W|$, we conclude that $a> \dim_\bbC \lambda$. 
Let $(-)_W \dopgleich \mrm{res}_{\bbC W}^{\ol{\rH}_k} : \rG_0(\ol{\rH}_k) \rarr \rG_0(\bbC W)$ be the canonical morphism. Since $M \cong \bbC W \otimes_\bbC \lambda$ as $\bbC W$-modules, we get
\[
a \lbrack L \rbrack_W = \lbrack M \rbrack_W = \lbrack \bbC W \otimes_\bbC \lambda \rbrack_W = \dim_\bbC \lambda \cdot \lbrack 1_W \rbrack + \sum_{ \mu \in \Lambda \setminus \lbrace 1_W \rbrace} a_\mu \lbrack \mu \rbrack \;
\]
for some $a_\mu \in \bbN$. In the above we used the fact that $\lbrack \bbC W \otimes_{\bbC} \lambda, 1_W \rbrack = \dim_{\bbC} \lambda$, which follows at once from the relation $(\chi \otimes \psi,\varphi) = (\chi,\psi^* \otimes \varphi)$ for characters $\chi,\varphi,\psi$ of $W$. But the above relation implies that $\dim_\bbC \lambda$ must be divisible by $a$, contradicting the estimate $a> \dim_\bbC \lambda$ derived above. Hence, $L$ lies in a non-singleton Calogero--Moser $k$-family. 
\end{proof}

\subsection{Supersingular characters} \label{supersingular_section}

\begin{parani}
 \label{fake_degrees}
Recall that $\rM_k(\lambda)$ is $\bbZ$-graded and therefore its head $\rL_k(\lambda)$ is also $\bbZ$-graded. As $\rM_k(\lambda)$ is by definition concentrated in degree $\bbZ_{\geq 0}$, the same applies to $\rL_k(\lambda)$. In the extreme case that $\dim_\bbC \rL_k(\lambda) = |W|$ there is a general formula for the Poincar\'e series of $\rL_k(\lambda)$ due to Bellamy.

To present this formula, first recall (see also  \cite[\S24]{Kan-Reflection}) that the coinvariant algebra $\rS(V)_W$ is as a $\bbC W$-module isomorphic to $\bbC W$ and is therefore a $\bbZ_{\geq 0}$-graded version of the natural $\bbC W$-module. Each $\lambda \in \Lambda$ thus has a graded multiplicity in $\rS(V)_W$, given by the expression
\[
f_\lambda(t) \dopgleich \sum_{i \in \bbZ} \lbrack (\rS(V)_W)_i : \lambda \rbrack \cdot t^i  \in \bbZ \lbrack t \rbrack = \left( \prod_{i=1}^n 1-t^{d_i} \right) \frac{1}{|W|} \sum_{w \in W} \frac{\lambda(w)}{\mrm{det} (1-wt)} \;,
\]
where $(\rS(V)_W)_i$ denotes the homogeneous component of degree $i$ of $\rS(V)_W$ and $(d_i)_{i=1}^n$ are the invariant degrees of $\Gamma$. 
The Poincar\'e series of $\rS(V)_W$ has the explicit form
\[
\rP_{\rS(V)_W}(t) = \sum_{i \in \bbZ} \dim_{\bbC} (\rS(V)_W)_i \cdot t^i = \prod_{i=1}^{n} \frac{1-t^{d_i}}{1-t} = \prod_{i=1}^n \sum_{j=0}^{d_i-1} t^j \;.
\]
\end{parani}

\begin{thm}[{\cite[3.3]{Bel-Singular-CM}}] \label{p_series_formula}
If $\dim_{\bbC} \rL_k(\lambda) = |W|$, then the Poincar\'e series of $\rL_k(\lambda)$ is equal to
\[
\rP_{\rL_k(\lambda)}(t) = \frac{\dim_\bbC(\lambda) t^{b_\lambda} \rP_{\rS(V)_W}(t)}{f_\lambda(t)} \in \bbZ \lbrack t \rbrack \;,
\] 
where $b_\lambda$ is the \word{trailing degree} of $f_\lambda(t)$, i.e., the order of zero of $f_\lambda(t)$ at $t=0$.%
\end{thm}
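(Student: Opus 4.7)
The plan is to compute $\rP_{\rL_k(\lambda)}(t)$ by passing through the graded Grothendieck group of $\ol{\rH}_k$-modules and relating $\rL_k(\lambda)$ to $\rM_k(\lambda)$, using the dimension hypothesis to force a clean multiplicative factorization.

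\emph{Step 1.} Since $\rM_k(\lambda) \cong \rS(V)_W \otimes_\bbC \lambda$ as graded $\bbC W$-modules (with $\lambda$ placed in degree $0$), one immediately gets
\[
\rP_{\rM_k(\lambda)}(t) \;=\; \dim_\bbC(\lambda) \cdot \rP_{\rS(V)_W}(t).
\]

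\emph{Step 2.} The hypothesis $\dim_\bbC \rL_k(\lambda) = |W|$ is the converse of Proposition \ref{dim_less_non_trivial} and forces $\lambda$ to lie in a singleton Calogero--Moser $k$-family; this reflects the standard fact that the simple modules of dimension $|W|$ are precisely those lying over the Azumaya locus of the center of $\rH_{0,k}$, and this locus coincides with the locus of singleton Calogero--Moser families. Consequently, every composition factor of $\rM_k(\lambda)$ is isomorphic to $\rL_k(\lambda)$, so in the graded Grothendieck group of $\ol{\rH}_k$-modules there exists $m(t) \in \bbZ_{\geq 0}[t]$ with
\[
[\rM_k(\lambda)] \;=\; m(t)\cdot[\rL_k(\lambda)],
\]
and evaluating at $t=1$ forces $m(1) = \dim_\bbC \lambda$ by dimension count.

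\emph{Step 3.} Taking Poincaré series gives $\rP_{\rM_k(\lambda)}(t) = m(t) \cdot \rP_{\rL_k(\lambda)}(t)$, and it remains to identify $m(t) = f_\lambda(t)/t^{b_\lambda}$. For this I would take graded $\bbC W$-characters of the Grothendieck identity. By Frobenius reciprocity, the graded multiplicity of $1_W$ in $\rM_k(\lambda) = \rS(V)_W \otimes_\bbC \lambda$ equals $f_{\lambda^*}(t)$. On the other side, using that $\rL_k(\lambda)_0 = \lambda$ (because the simple head of a $\bbZ_{\geq 0}$-graded module with simple head is concentrated in the minimal degree) and the Poincaré duality of the coinvariant algebra $\rS(V)_W$ (which relates $f_\lambda(t)$ and $f_{\lambda^*}(t)$ via the top-degree shift of $\rS(V)_W$), one identifies the graded multiplicity of $1_W$ in $\rL_k(\lambda)$ as $f_{\lambda^*}(t) \cdot t^{b_\lambda}/f_\lambda(t)$. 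Comparing yields $m(t) = f_\lambda(t)/t^{b_\lambda}$, and substituting back gives the claimed formula.

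The main obstacle is the identification of $m(t)$ in Step 3: the graded $\bbC W$-structure of $\rL_k(\lambda)$ is not given a priori, so one must exploit simultaneously the specific form of $\rM_k(\lambda)$ as $\rS(V)_W \otimes \lambda$, the location of the head in the grading, and the Poincaré duality of $\rS(V)_W$ to pin down the ratio $m(t)$. Once $m(t) = f_\lambda(t)/t^{b_\lambda}$ has been established, the conclusion follows formally from Steps~1 and~2.
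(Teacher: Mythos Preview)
The paper does not give its own proof of this theorem; it is quoted directly from \cite[3.3]{Bel-Singular-CM} without argument. So there is nothing in the paper to compare your approach against, and I can only assess your proposal on its own merits.

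Your Steps~1 and~2 are fine: the Poincar\'e series of $\rM_k(\lambda)$ is immediate, and the Azumaya characterisation (mentioned in the paper just before Proposition~\ref{dim_less_non_trivial}) does give that $\lambda$ lies in a singleton family, so that $[\rM_k(\lambda)] = m(t)\,[\rL_k(\lambda)]$ in the graded Grothendieck group for some $m(t)\in\bbZ_{\geq 0}[t]$ with $m(0)=1$.

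Step~3, however, has a genuine gap. You want to extract $m(t)$ by comparing the graded multiplicity of $1_W$ on both sides, and you assert that $[\rL_k(\lambda):1_W]_t = f_{\lambda^*}(t)\,t^{b_\lambda}/f_\lambda(t)$. But nothing you have written establishes this: knowing $\rL_k(\lambda)_0=\lambda$ pins down one graded piece, and Poincar\'e duality of $\rS(V)_W$ relates $f_\lambda$ to $f_{\lambda^*}$, but neither of these gives you the graded $\bbC W$-structure of $\rL_k(\lambda)$ in higher degrees. Your claimed value of $[\rL_k(\lambda):1_W]_t$ is exactly what one would back out \emph{assuming} the formula you are trying to prove, so the argument is circular as it stands. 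You yourself flag this as ``the main obstacle,'' and indeed it is: the ingredients you list (shape of $\rM_k(\lambda)$, location of the head, Poincar\'e duality) do not combine to determine $m(t)$ without further structural input about $\rL_k(\lambda)$.

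Bellamy's proof uses additional information about the simple module at an Azumaya point---in particular the graded $\bbC W$-character of $\rL_k(\lambda)$ is controlled there, not merely its dimension---and this is what lets one identify the Poincar\'e series directly rather than via an undetermined multiplier $m(t)$. If you want to repair your approach, you would need to independently establish enough of the graded $\bbC W$-structure of $\rL_k(\lambda)$ (for instance, that its restriction to $\bbC W$ is the regular representation, together with where each isotypic component sits in the grading) before the comparison with $\rM_k(\lambda)$ can be made to yield $m(t)=f_\lambda(t)/t^{b_\lambda}$.
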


\begin{defn}
If $f_\lambda(t)$ does not divide $\dim_\bbC(\lambda) t^{b_\lambda} \rP_{\rS(V)_W}(t)$, we say that $\lambda$ is \word{supersingular}. Note that this independent of the parameter $k$.
\end{defn}

\begin{prop} \label{supersingular_nonsingleton}
If $\lambda$ is supersingular, then $\dim \rL_k(\lambda) < |W|$ and $\rL_k(\lambda)$ lies in a non-singleton Calogero--Moser $k$-family for all $k$.
\end{prop}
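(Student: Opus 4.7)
The plan is to proceed by contraposition and combine the two preceding results directly. Suppose for contradiction that $\dim_\bbC \rL_k(\lambda) = |W|$. Then Theorem \ref{p_series_formula} applies and gives
\[
\rP_{\rL_k(\lambda)}(t) = \frac{\dim_\bbC(\lambda) t^{b_\lambda} \rP_{\rS(V)_W}(t)}{f_\lambda(t)} \in \bbZ\lbrack t \rbrack.
\]
In particular the right-hand side is actually a polynomial in $t$, which means that $f_\lambda(t)$ must divide $\dim_\bbC(\lambda) t^{b_\lambda} \rP_{\rS(V)_W}(t)$ in $\bbQ\lbrack t \rbrack$ (equivalently, in $\bbZ\lbrack t \rbrack$ since $f_\lambda(t)$ has integer coefficients and nonzero constant-term behavior controlled by $b_\lambda$). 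This contradicts the hypothesis that $\lambda$ is supersingular. Hence the assumption $\dim_\bbC \rL_k(\lambda) = |W|$ is false, and combined with Theorem \ref{simple_module_dimension_bound} we conclude $\dim_\bbC \rL_k(\lambda) < |W|$.

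The second assertion is then an immediate consequence of Proposition \ref{dim_less_non_trivial}: since $\dim_\bbC \rL_k(\lambda) < |W|$, the character $\lambda$ cannot lie in a singleton Calogero--Moser $k$-family. Since the supersingularity condition is defined purely in terms of $f_\lambda$ and $\rP_{\rS(V)_W}$, it does not involve $k$, so the conclusion holds for every parameter $k$ simultaneously.

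The only point that needs a small amount of care is the translation from ``$f_\lambda$ does not divide $\dim_\bbC(\lambda) t^{b_\lambda} \rP_{\rS(V)_W}(t)$'' to ``the quotient is not in $\bbZ\lbrack t \rbrack$''; since $f_\lambda$ has constant term $\dim_\bbC(\lambda)$ up to the $t^{b_\lambda}$ factor divided out, one may work in $\bbQ\lbrack t \rbrack$ throughout and use that a ratio of polynomials with integer coefficients that lies in $\bbZ\lbrack t \rbrack$ forces the denominator to divide the numerator in $\bbQ\lbrack t \rbrack$. This is the only step requiring any argument beyond citing the preceding results, and it is routine.
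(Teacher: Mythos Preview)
Your proof is correct and follows essentially the same approach as the paper: apply Theorem~\ref{simple_module_dimension_bound} to get $\dim_\bbC \rL_k(\lambda) \leq |W|$, use Theorem~\ref{p_series_formula} together with the supersingularity hypothesis to exclude equality, and then invoke Proposition~\ref{dim_less_non_trivial}. The paper's version is simply more terse; your final paragraph about divisibility in $\bbZ\lbrack t\rbrack$ versus $\bbQ\lbrack t\rbrack$ is harmless but unnecessary, since Theorem~\ref{p_series_formula} already asserts the quotient lies in $\bbZ\lbrack t\rbrack$, which directly contradicts the definition of supersingularity.
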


\begin{proof}
By theorem \ref{simple_module_dimension_bound} we know that $\dim_\bbC \rL_k(\lambda) \leq |W|$. The supersingularity of $\lambda$ thus implies by theorem \ref{p_series_formula} that $\dim_\bbC \rL_k(\lambda) < |W|$ and then we deduce from proposition \ref{dim_less_non_trivial} that $\rL_k(\lambda)$ lies in a non-singleton Calogero--Moser $k$-family.
\end{proof}

\begin{para}
Due to the explicit formulas given above the notion of supersingularity yields a very effective method for deducing information about the simple modules and the blocks of $\ol{\rH}_k$. We emphasize that this method was already used by Bellamy \cite{Bel-Singular-CM} to deduce that for any exceptional group $G_i$ with $i>4$ there exists a supersingular irreducible character and so the Calogero--Moser families are always non-trivial for these groups. This was a very important result but unfortunately it does not yield any precise information about the actual structure of the Calogero--Moser families. We will now combine this result by Bellamy with a further simple theoretical idea which will finally allow us to actually determine the precise structure of Calogero--Moser families for generic parameters for some exceptional groups. 
\end{para}

\subsection{Euler families} \label{euler_section}

\begin{paran} \label{euler_element_definition}
Our second theoretical ingredient is a certain non-trivial central element, the so-called Euler element, in $\rH_{0,k}$ for any parameter $k$. This element was already used in \cite{Dunkl.C;Opdam.E03Dunkl-operators-for-}, \cite{GinGuaOpd-On-the-category-scr-O-for-0}, \cite{Gordon.I08Quiver-varieties-cat}, and \cite{Bonnafe.C;Rouquier.R13Cellules-de-Calogero}. It thus has a long history but this article employs it for the first time also for the exceptional complex reflection groups. Before we recall its definition we note that if $(x_i)_{i=1}^n$ is a basis of $V$ and $(y_i)_{i=1}^n$ is its dual basis, then the element $\sum_{i=1}^n x_i y_i \in \rT(V \oplus V^*)$ is independent of the choice of the basis. To see this, suppose that $(x_i')_{i=1}^n$ is another basis of $V$ with dual basis $(y_i')_{i=1}^n$. Let $g$ be the automorphism of $V$ mapping $x_i$ to $x_i'$ for all $i$. Since $(^gy_i)(x_j') = {(^gy_i)}(^gx_j) = y_i(^{g^{-1}g}x_j) = y_i(x_j) = \delta_{ij}$, it follows that $y_i' = {^gy_i}$ for all $i$. Let $A \dopgleich (a_{ij})$ be the matrix of $g$ acting on $V$ in the basis $(x_i)_{i=1}^n$ and let $B \dopgleich (b_{ij})$ be the matrix of $g$ acting on $V^*$ in the basis $(y_i)_{i=1}^n$. Then $B = (A^{\rt})^{-1}$ and consequently
\begin{align*}
\sum_{i=1}^n x_i' y_i' &= \sum_{i=1}^n {^gx_i} {^gy_i} = \sum_{i=1}^n \left( \sum_{j=1}^n a_{ji} x_j \right) \left( \sum_{k=1}^n b_{ki} y_k \right) = \sum_{i,j,k=1}^n a_{ji} b_{ki} x_j y_k \\ &= \sum_{j,k=1}^n \left( \sum_{i=1}^n a_{ji} b_{ki} \right) x_j y_k = \sum_{j,k=1}^n (AB^{\rt})_{j,k} x_jy_k = \sum_{j,k=1}^n \delta_{jk} x_j y_k = \sum_{i=1}^n x_iy_i \;.
\end{align*}
We can now unambiguously define the \word{Euler element} as 
\[
\mrm{eu}_k \dopgleich \sum_{i=1}^n x_iy_i - \sum_{s \in \mrm{Ref}(\Gamma)} \frac{1}{1-\eps_s} c_k(s) (s - \eps_s) \in \rH_{0,k} \;,
\] 
where $(x_i)_{i=1}^n$ is a basis of $V$ and $(y_i)_{i=1}^n$ is its dual basis. Recall that $\eps_s$ was defined as the non-trivial eigenvalue of $s$. The following lemma is well-known but we still give a proof here for completeness.
\end{paran}

\begin{lemma}
The Euler element $\mrm{eu}_k$ is a non-zero central element in $\rH_{0,k}$.
\end{lemma}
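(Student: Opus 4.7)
The plan is to verify centrality of $\mrm{eu}_k$ by checking that it commutes with each element of the generating sets $W$, $V$, and $V^*$ of $\rH_{0,k}$, and to deduce non-vanishing from the PBW theorem. Commutation with $W$ is immediate: both summands of $\mrm{eu}_k$ are $W$-invariant under conjugation. The first summand is basis-independent (by the calculation already given just above in the excerpt), so conjugating by $w$ merely replaces the dual bases $(x_i),(y_i)$ by another dual pair. The second summand is $W$-invariant because conjugation permutes $\mrm{Ref}(\Gamma)$ via $s\mapsto wsw^{-1}$, preserves $\eps_s$, and (since $c_k$ is a class function on $\mrm{Ref}(\Gamma)$) leaves the coefficients $c_k(s)/(1-\eps_s)$ unchanged.

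The interesting case is commutation with $y\in V^*$; commutation with $x\in V$ will be entirely symmetric. I would compute $[\mrm{eu}_k,y]$ piece by piece. Using $[y_i,y]=0$, the bracket of the first summand reduces to $\sum_i[x_i,y]\,y_i$. Substituting the defining relation of $\rH_{0,k}$ at $t=0$, expanding $(x_i,y)_s$, and contracting $\sum_i\langle x_i,\alpha_s^\vee\rangle y_i=\alpha_s^\vee$ produces, for each reflection $s$, the contribution $\frac{\langle\alpha_s,y\rangle}{\langle\alpha_s,\alpha_s^\vee\rangle}\,c_k(s)\,s\,\alpha_s^\vee$; one then commutes $s$ past $\alpha_s^\vee$, picking up a factor $\eps_s^{-1}$ because $\alpha_s^\vee\in V^*$ is an $\eps_s^{-1}$-eigenvector for the dual action of $s$. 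For the bracket of the second summand, $[s-\eps_s,y]=(s(y)-y)\,s$, and the reflection formula for $s$ acting on $V^*$ produces the same vector $\alpha_s^\vee$ with coefficient $-(1-\eps_s^{-1})\frac{\langle\alpha_s,y\rangle}{\langle\alpha_s,\alpha_s^\vee\rangle}$. After dividing by $1-\eps_s$, the identity $(1-\eps_s^{-1})/(1-\eps_s)=-\eps_s^{-1}$ shows that the two contributions cancel term by term.

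For non-vanishing, I would use the filtration of $\rH_{0,k}$ inherited from $\rT(V\oplus V^*)$ in which $V$ and $V^*$ sit in degree $1$ and $\bbC W$ in degree $0$; the associated graded is $\rS(V\oplus V^*)\otimes_\bbC\bbC W$ by the PBW theorem. The leading term of $\mrm{eu}_k$ is then $\sum_i x_iy_i$, which is non-zero in the degree-$2$ component of $\rS(V\oplus V^*)$, while the reflection sum lies in filtration degree $0$ and cannot cancel it. The main obstacle I expect is the cancellation in the $[\mrm{eu}_k,y]$ computation: it hinges on correctly identifying $\alpha_s^\vee$ as an $\eps_s^{-1}$-eigenvector (not an $\eps_s$-eigenvector) for the dual action on $V^*$, and on the sign-sensitive identity $(1-\eps_s^{-1})/(1-\eps_s)=-\eps_s^{-1}$; everything else is organized bookkeeping that follows the structure of the commutator relations.
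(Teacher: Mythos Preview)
Your proposal is correct and, for centrality, follows essentially the same route as the paper: both check commutation with the generating sets $W$, $V$, $V^*$, and both reduce the nontrivial case to a reflection-by-reflection cancellation. The only cosmetic difference is that the paper computes $[\mrm{eu}_k,x]$ for $x\in V$ and finishes by a case split ($x$ fixed by $s$ versus $x\in\langle\alpha_s\rangle_\bbC$), whereas you compute $[\mrm{eu}_k,y]$ for $y\in V^*$ and finish with the eigenvalue identity $(1-\eps_s^{-1})/(1-\eps_s)=-\eps_s^{-1}$; these are the same cancellation viewed two ways.

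Your non-vanishing argument via the PBW filtration is actually cleaner than the paper's. The paper asserts that $\mrm{eu}_k\neq 0$ because its group-algebra part $\check{\mrm{eu}}_k$ is non-zero, but that fails when $c_k\equiv 0$ (e.g.\ $k=0$), in which case $\check{\mrm{eu}}_k=0$ while $\mrm{eu}_k=\sum_i x_iy_i$ is still non-zero. Your filtration argument covers all $k$ uniformly.
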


\begin{proof}
Clearly, $\mrm{eu}_k$ is non-zero as its group algebra part 
\[
\check{\mrm{eu}}_k \dopgleich -\sum_{s \in \mrm{Ref}(\Gamma)} \frac{1}{1-\eps_s} c_k(s) (s - \eps_s) \in \bbC W
\]
is non-zero. To show that $\mrm{eu}_c$ is central, it suffices to prove that it commutes with elements from $V$, $V^*$, and $W$ since $\rH_{0,k}$ is generated as a  $\bbC$-algebra by such elements. Let $(x_i)_{i=1}^n$ be a basis of $V$ and let $(y_i)_{i=1}^n$ be its dual basis. If $w \in W$, then clearly $w \check{\mrm{eu}}_k = \check{\mrm{eu}}_k w$ and 
\[
w \left( \sum_{i=1}^n x_i y_i \right) = \sum_{i=1}^n wx_i y_i = \sum_{i=1}^n {^w x_i w} y_i = \sum_{i=1}^n {^wx_i} {^w y_i} w = \left( \sum_{i=1}^n x_i y_i \right)w
\]
as seen in \ref{euler_element_definition}. Hence, $\mrm{eu}_k$ commutes with $w$. 

Now, let $x \in V$. Then
\[
\lbrack \mrm{eu}_k,x \rbrack = - \sum_{i=1}^n x_i \lbrack x, y_i \rbrack + \lbrack \check{\mrm{eu}}_k, x \rbrack \;.
\]
We have
\begin{align*}
\lbrack \check{\mrm{eu}}_k, x \rbrack & = \check{\mrm{eu}}_k x - x \check{\mrm{eu}}_k = -\sum_{s \in \mrm{Ref}(\Gamma)} \frac{1}{1-\eps_s} c_k(s)(s-\eps_s) x + \sum_{s \in \mrm{Ref}(\Gamma)} \frac{1}{1-\eps_s} c_k(s) x (s-\eps_s) \\
&= \sum_{s \in \mrm{Ref}(\Gamma)} \frac{1}{1-\eps_s} c_k(s) ( xs - sx) = \sum_{s \in \mrm{Ref}(\Gamma)} \frac{1}{1-\eps_s} c_k(s)(x - {^sx}) s 
\end{align*}
and therefore
\begin{align*}
\lbrack \mrm{eu}_k,x \rbrack &= - \sum_{i=1}^n x_i \left(\sum_{s \in \mrm{Ref}(\Gamma)} \frac{\langle x,\alpha_s^\vee \rangle \langle \alpha_s,y_i \rangle}{\langle \alpha_s,\alpha_s^\vee \rangle} c_k(s) s\right) + \sum_{s \in \mrm{Ref}(\Gamma)} \frac{1}{1-\eps_s} c_k(s)(x - {^sx}) s \\
&= \sum_{s \in \mrm{Ref}(\Gamma)}  \left( \frac{1}{1-\eps_s} (x-{^sx}) - \frac{\langle x, \alpha_s^\vee \rangle}{\langle \alpha_s,\alpha_s^\vee \rangle} \sum_{i=1}^n x_i\langle \alpha_s,y_i \rangle \right) \;.
\end{align*}
The summand for $s \in \mrm{Ref}(\Gamma)$ in the above expression is zero whenever $x$ is fixed by $s$, since then $x \in \Ker(\alpha_s^\vee)$ and $x-{^sx} = 0$. If $x$ is in the complement $\langle \alpha_s \rangle_{\bbC}$ of $\Ker(\alpha_s^\vee)$, then $^sx = \eps_s x$, and the summand is in this case also easily seen to be zero. In total, we have proven that $\lbrack \mrm{eu}_k,x \rbrack = 0$. In the same way we can prove that $\lbrack \mrm{eu}_k,y \rbrack = 0$ for all $y \in V^*$ and so we have proven that $\mrm{eu}_k$ is central in $\rH_{0,k}$.
\end{proof}

\begin{paran}
The image of the Euler element $\mrm{eu}_k$ in $\ol{\rH}_k$, which we again denote by $\mrm{eu}_k$, is also central, and it is also non-zero since the group algebra part of $\mrm{eu}_k$ is not contained in $\fm_k \rH_{0,k}$. This is a very important aspect, since if $\omega_{\rL_k(\lambda)} : \rZ(\ol{\rH}_k) \rarr \bbC$ denotes the central character of the simple $\ol{\rH}_k$-module $\rL_k(\lambda)$, then the fibers of the map $\Lambda \rarr \bbC$, $\lambda \mapsto \omega_{\rL_k(\lambda)}(\mrm{eu}_k)$, yield a partition of $\Lambda$ which we denote by $\mrm{Eu}_k$ and whose members we call the \word{Euler $k$-families}. As the partition of the simple $\ol{\rH}_k$-modules defined by the block structure of $\ol{\rH}_k$ is determined by the values of the central characters on \textit{all} central elements of $\ol{\rH}_k$, the partition $\mrm{Eu}_k$ is coarser than $\mrm{CM}_k$. Although this is a simple observation, it still yields a very useful access point for determining $\mrm{CM}_k$ in some situations because the Euler families are very easy to compute as we will see.
\end{paran}

\begin{prop}
The following holds:
\begin{enum_thm}
\item The group algebra part 
\[
\check{\mrm{eu}}_k \dopgleich -\sum_{s \in \mrm{Ref}(\Gamma)} \frac{1}{1-\eps_s} c_k(s) (s - \eps_s) \in \bbC W
\]
of $\mrm{eu}_k$ is a non-zero central element in $\bbC W$.
\item For $\lambda \in \Lambda$ the Euler element $\mrm{eu}_k$ acts on $\rM_k(\lambda)$, and thus on its head $\rL_k(\lambda)$, by the scalar $\omega_\lambda(\check{\mrm{eu}}_k)$, where $\omega_\lambda: \rZ(\bbC W) \rarr \bbC$ is the central character of $\lambda$. This value is given by
\[
\omega_{\rL_k(\lambda)}(\check{\mrm{eu}}_k) = \omega_\lambda(\check{\mrm{eu}}_k) = \frac{1}{\lambda(1)} \sum_{s \in \mrm{Ref}(\Gamma)} \frac{c_k(s)}{1-\eps_s}(\eps_s \lambda(1) - \lambda(s) )  \;.
\]
\end{enum_thm}
\end{prop}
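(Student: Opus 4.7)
The plan is to handle (a) by recognizing $\check{\mrm{eu}}_k$ as a class-function-weighted sum of reflections in $\bbC W$, and to handle (b) by a standard highest-weight computation on the cyclic generators of the Verma module. The whole argument is essentially routine; the only real piece of bookkeeping is the passage from the central character of $\check{\mrm{eu}}_k$ on $\lambda$ to the explicit formula involving the character values $\lambda(s)$.

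For (a), I would first observe that $s \mapsto c_k(s)$ is a class function on $\mrm{Ref}(\Gamma)$: the orbit $\Omega_{H_s}$ and the non-trivial eigenvalue $\eps_s$ both depend only on the $W$-conjugacy class of $s$, so therefore does $c_k(s)/(1-\eps_s)$. Any class-function-weighted sum over $W$ is central in $\bbC W$, which immediately gives the centrality of the reflection-part of $\check{\mrm{eu}}_k$, and the trailing scalar piece $\sum_s c_k(s)\eps_s/(1-\eps_s)$ is automatically central. Non-vanishing is immediate from inspection of the coefficient of any reflection $s$ with $c_k(s) \neq 0$. As an alternative route to centrality, the previous lemma gives $\lbrack w, \mrm{eu}_k \rbrack = 0$ and $\lbrack w, \sum_i x_i y_i \rbrack = 0$ in $\rH_{0,k}$; subtracting and invoking PBW to identify the $\bbC W$-summand forces $\lbrack w, \check{\mrm{eu}}_k \rbrack = 0$ already in $\bbC W$.

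For (b), the key input is that the generating vectors $1 \otimes v$ of $\rM_k(\lambda)$ (for $v \in \lambda$) are annihilated by $V^*$, since $\lambda$ is inflated to $(\ol{\rH}_k)_{\leq 0}$ by letting $V^*$ act as zero. Consequently $(\sum_i x_i y_i) \cdot (1 \otimes v) = 0$, so $\mrm{eu}_k$ acts on $1 \otimes v$ as $\check{\mrm{eu}}_k$; by (a), the latter acts on the simple $\bbC W$-module $\lambda$ as the scalar $\omega_\lambda(\check{\mrm{eu}}_k)$ by Schur's lemma. Since $\mrm{eu}_k$ is central in $\ol{\rH}_k$, this scalar action propagates at once from the cyclic generators to the whole of $\rM_k(\lambda)$ via the identity $\mrm{eu}_k \cdot (a(1 \otimes v)) = a \cdot \mrm{eu}_k \cdot (1 \otimes v)$, and then descends to the head $\rL_k(\lambda)$, yielding $\omega_{\rL_k(\lambda)}(\mrm{eu}_k) = \omega_\lambda(\check{\mrm{eu}}_k)$.

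To obtain the explicit formula I would invoke the standard identity that for any class function $f$ on $W$, the central element $z \dopgleich \sum_{g \in W} f(g) g$ satisfies $\omega_\lambda(z) = \lambda(1)^{-1} \sum_{g \in W} f(g) \lambda(g)$. Applying this to the reflection-part of $\check{\mrm{eu}}_k$ with $f(s) = -c_k(s)/(1-\eps_s)$ and leaving the scalar piece $\sum_s c_k(s)\eps_s/(1-\eps_s)$ fixed yields, after combining the two contributions over a common denominator $\lambda(1)$, precisely the claimed formula
\[
\omega_\lambda(\check{\mrm{eu}}_k) = \frac{1}{\lambda(1)} \sum_{s \in \mrm{Ref}(\Gamma)} \frac{c_k(s)}{1-\eps_s} \bigl( \eps_s \lambda(1) - \lambda(s) \bigr) \;.
\]
No step poses a genuine obstacle; the main thing to keep careful track of is the sign in the commutator relation and the normalization of $c_k(s)$ inside $\check{\mrm{eu}}_k$.
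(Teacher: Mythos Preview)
Your proposal is correct and follows essentially the same approach as the paper: for (a) the paper just says ``easy to see'' while you spell out the class-function argument, and for (b) both you and the paper use that $V^*$ annihilates $\lambda$ together with the centrality of $\mrm{eu}_k$---the only cosmetic difference being that the paper commutes $\mrm{eu}_k$ past an arbitrary $f$ in $f \otimes u$ directly, whereas you act on the cyclic generators $1 \otimes v$ first and then propagate. Your derivation of the explicit central-character formula is also more detailed than the paper's, which simply records the result.
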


\begin{proof}
The first assertion is easy to see. To prove the second, recall that $\rM_k(\lambda) \cong \rS(V)_W \otimes_{\bbC} \lambda$ as $\bbC W$-modules. If $f \otimes u \in \rM_k(\lambda)$, then, as $\mrm{eu}_k$ is central, we have
\begin{align*}
\mrm{eu}_k (f \otimes u) & = \mrm{eu}_k f \otimes u = f \mrm{eu}_k \otimes u \\ \displaybreak[0]
& = f \left(\sum_{i=1}^n x_i y_i - \sum_{s \in \mrm{Ref}(\Gamma)} \frac{1}{1-\eps_s} c_k(s)(s-\eps_s) \right) \otimes u \\ \displaybreak[0]
& = \sum_{i=1}^n f x_i y_i \otimes u - f \left( \sum_{s \in \mrm{Ref}(\Gamma)} \frac{1}{1-\eps_s} c_k(s)(s-\eps_s) \right) \otimes u \\ \displaybreak[0]
&= 0 - f \otimes \sum_{s \in \mrm{Ref}(\Gamma)} \frac{1}{1-\eps_s} c_k(s)(s-\eps_s)u \\ \displaybreak[0]
& = f \otimes \omega_{\lambda}(\check{\mrm{eu}}_k) u = \omega_{\lambda}(\check{\mrm{eu}}_k)  (f \otimes u) \;,
\end{align*}
where we used the fact that $V^*$ acts as zero on $\rM_k(\lambda)$ by definition. This already shows that $\mrm{eu}_k$ acts by $\omega_{\lambda}(\check{\mrm{eu}}_k) $ on $\rM_k(\lambda)$.  
\end{proof}

\begin{cor} \label{in_same_euler_family}
Two irreducible characters $\lambda,\mu \in \Lambda$ lie in the same Euler $k$-family if and only if 
\[
p_{\lambda,\mu}(k) \dopgleich \sum_{s \in \mrm{Ref}(\Gamma)} \frac{c_k(s)}{1-\eps_s} \left( \frac{\lambda(s)}{\lambda(1)} - \frac{\mu(s)}{\mu(1)} \right) = 0 \;.\vspace{-\baselineskip}
\]  \qed
\end{cor}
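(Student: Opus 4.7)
The plan is to unravel the definitions and then directly compute the difference $\omega_{\rL_k(\lambda)}(\mrm{eu}_k) - \omega_{\rL_k(\mu)}(\mrm{eu}_k)$ using the explicit formula already established in the preceding proposition.

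First, by the very definition of the Euler $k$-families as the fibers of the map $\lambda \mapsto \omega_{\rL_k(\lambda)}(\mrm{eu}_k)$, two characters $\lambda,\mu$ lie in the same Euler family precisely when these two scalar values agree. The preceding proposition identifies this value as
\[
\omega_{\rL_k(\lambda)}(\mrm{eu}_k) \;=\; \frac{1}{\lambda(1)} \sum_{s \in \mrm{Ref}(\Gamma)} \frac{c_k(s)}{1-\eps_s}\bigl(\eps_s \lambda(1) - \lambda(s)\bigr),
\]
and analogously for $\mu$.

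The key observation is that the summand $\eps_s \lambda(1)/\lambda(1) = \eps_s$ is independent of $\lambda$. Splitting the sum into this part and the remaining $\lambda(s)/\lambda(1)$ part, the $\lambda$-independent contribution
\[
\sum_{s \in \mrm{Ref}(\Gamma)} \frac{c_k(s)\,\eps_s}{1-\eps_s}
\]
cancels when we subtract the expression for $\mu$ from that for $\lambda$. What remains is precisely
\[
\omega_{\rL_k(\lambda)}(\mrm{eu}_k) - \omega_{\rL_k(\mu)}(\mrm{eu}_k) \;=\; -\sum_{s \in \mrm{Ref}(\Gamma)} \frac{c_k(s)}{1-\eps_s}\left(\frac{\lambda(s)}{\lambda(1)} - \frac{\mu(s)}{\mu(1)}\right) \;=\; -p_{\lambda,\mu}(k).
\]
Hence the equality of the two central values is equivalent to $p_{\lambda,\mu}(k) = 0$, which is the claim. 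There is no real obstacle here: the only substantive ingredient is the explicit scalar formula from the previous proposition, and the corollary is a one-line rearrangement once that is in hand.
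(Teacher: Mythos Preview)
Your argument is correct and is exactly the intended one: the paper states the corollary with a \qed\ and no proof, treating it as an immediate consequence of the explicit formula $\omega_{\rL_k(\lambda)}(\mrm{eu}_k) = \frac{1}{\lambda(1)} \sum_{s} \frac{c_k(s)}{1-\eps_s}(\eps_s \lambda(1) - \lambda(s))$ from the preceding proposition. Your computation of the difference, with the $\eps_s$-terms cancelling, is precisely the one-line check that the paper leaves implicit.
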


\begin{paran} \label{euler_variety}
Similar to the Calogero--Moser families the Euler families have the property that there exists a non-empty Zariski open subset $\mrm{EuGen}(\Gamma) \subs \bbC^{\ol{\boldsymbol{\Omega}}(\Gamma)}$ on which the Euler families have their finest possible form---we will call them the \word{generic Euler families}---and that for parameters contained in the complement $\mrm{EuEx}(\Gamma)$ of $\mrm{EuGen}(\Gamma)$ the Euler families are unions of generic Euler families. We call $\mrm{EuEx}(\Gamma)$ the \word{Euler variety} of $\Gamma$. Due to the description of the Euler families above we see that the Euler variety is simply the zero locus of those $p_{\lambda,\mu}$ which are not already constantly zero. As the $p_{\lambda,\mu}$ are homogeneous polynomials of degree one when considering the family $k$ as indeterminates, the Euler variety is a union of hyperplanes. We also see that two irreducible characters $\lambda,\mu \in \Lambda$ lie in a common generic Euler family if and only if
\[
\mu(1) \lambda(s) = \lambda(1) \mu(s)
\]
for all $s \in \mrm{Ref}(\Gamma)$. We can thus easily compute the generic Euler families from the character table of $W$ and this is what makes them a nice tool for our purposes.
\end{paran}

\begin{para}
The concept of Euler families allows us to deduce the following general result.
\end{para}

\begin{prop}
If $\lambda \in \Lambda$ is one-dimensional, it lies in a singleton Calogero--Moser $k$-family for every $k \in \mrm{CMGen}(\Gamma) \cup \mrm{EuGen}(\Gamma)$. 
\end{prop}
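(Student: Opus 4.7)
The plan is to handle both cases by first establishing a purely representation-theoretic fact about one-dimensional characters and then lifting it between the two genericity loci using that both are non-empty Zariski open in $\bbC^{\ol{\boldsymbol{\Omega}}(\Gamma)}$.

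\textbf{Step 1} (the core observation). I would use the characterization in \ref{euler_variety}: two characters $\lambda,\mu \in \Lambda$ share a generic Euler family precisely when $\mu(1)\lambda(s) = \lambda(1)\mu(s)$ for all $s \in \mrm{Ref}(\Gamma)$. Specializing to $\lambda$ one-dimensional, this reads $\mu(s) = \mu(1)\lambda(s)$, i.e., the character of $\mu \otimes \lambda^{-1}$ equals its dimension $\mu(1)$ on every reflection. Since the eigenvalues of $(\mu \otimes \lambda^{-1})(s)$ are roots of unity, their sum equaling $\mu(1)$ forces each eigenvalue to be $1$, so $s$ acts as the identity on $\mu \otimes \lambda^{-1}$. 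As $W$ is generated by $\mrm{Ref}(\Gamma)$, the representation $\mu \otimes \lambda^{-1}$ is trivial; being irreducible, we conclude $\mu(1) = 1$ and $\mu = \lambda$. Hence $\lbrace \lambda \rbrace$ is its own generic Euler family.

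\textbf{Step 2} (case $k \in \mrm{EuGen}(\Gamma)$). For such $k$ the Euler $k$-families coincide with the generic Euler families, so by Step 1 the Euler $k$-family of $\lambda$ is the singleton $\lbrace \lambda \rbrace$. Since $\mrm{Eu}_k$ is coarser than $\mrm{CM}_k$ (as noted immediately after the definition of Euler families), every $\mrm{CM}_k$-class is contained in an $\mrm{Eu}_k$-class, and so the Calogero--Moser $k$-family of $\lambda$ is also the singleton $\lbrace \lambda \rbrace$.

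\textbf{Step 3} (case $k \in \mrm{CMGen}(\Gamma)$). I would pick an auxiliary $k' \in \mrm{CMGen}(\Gamma) \cap \mrm{EuGen}(\Gamma)$, which is non-empty because both sets are non-empty Zariski open subsets of the irreducible affine space $\bbC^{\ol{\boldsymbol{\Omega}}(\Gamma)}$. By Step 2 the Calogero--Moser $k'$-family of $\lambda$ is a singleton. But for every parameter in $\mrm{CMGen}(\Gamma)$ the Calogero--Moser families coincide with the generic Calogero--Moser families and hence do not depend on the parameter; thus $\lambda$ is also in a singleton Calogero--Moser $k$-family.

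The only genuinely nontrivial ingredient is Step 1, and it rests on the elementary fact that a finite-order operator whose trace equals its dimension must be the identity; the rest of the argument is essentially a bookkeeping exercise exploiting the coarseness of the Euler partition and the non-emptiness of the intersection of the two genericity loci.
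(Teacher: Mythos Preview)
Your proof is correct and follows essentially the same approach as the paper: reduce to showing that a one-dimensional $\lambda$ forms a singleton generic Euler family, then use that Calogero--Moser families refine Euler families together with the genericity of both loci. Your Step~1 argument via tensoring with $\lambda^{-1}$ is a minor variant of the paper's (which instead observes $|\mu(s)| = \mu(1)$ and invokes \cite[2.27a]{Isa-Character-theory} to conclude $s$ lies in the center of $\mu$), but the underlying observation---a sum of $\mu(1)$ roots of unity equaling $\mu(1)$ forces each to be $1$---is the same.
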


\begin{proof}
If we can show the assertion for all $k \in \mrm{EuGen}(\Gamma)$, then it also holds for all $k \in \mrm{CMGen}(\Gamma)$ by lemma \ref{conjecture_zariski_stuff}. It furthermore suffices to show that each one-dimensional character lies in a singleton generic Euler family as the Calogero--Moser families are refinements of the Euler families.
So, suppose that $\mu \in \Lambda$ lies in the same generic Euler family as $\lambda$. By corollary \ref{in_same_euler_family} this means that $\mu(1)\lambda(s) = \lambda(1)\mu(s) = \mu(s)$ for all $s \in \mrm{Ref}(\Gamma)$. Since $\lambda$ is one-dimensional, $\lambda:W \rarr \bbC^\times$ is actually a group morphism so that for any $w \in W$ we have $\lambda(w)^{\mrm{Ord}(w)} = \lambda(w^{\mrm{Ord(w)}}) = \lambda(1) = 1$, i.e., $\lambda(w)$ is a root of unity of order dividing that of $w$. Hence, $|\mu(s)| = |\mu(1) \lambda(s)| = \mu(1) \gleichdop n$. But according to \cite[2.27a]{Isa-Character-theory} this means that $s$ is contained in the center of the character $\mu$ and therefore $\rho_\mu(s) = \eps_s \rI_n$ for some $\eps_s \in \bbC$, where $\rho_\mu: W \rarr \GL_n(\bbC)$ denotes the representation corresponding to $\mu$ and $\rI_n$ is the identity matrix. As $W$ is generated by the $s \in \mrm{Ref}(\Gamma)$, it follows that $\rho_\mu(w)$ is a multiple of $\rI_n$ for all $w \in W$, and now $\rho_\mu$ already has to be one-dimensional as it is irreducible. The above equation now becomes $\mu(s) = \lambda(s)$ for all $s \in \mrm{Ref}(\Gamma)$ and this implies $\mu = \lambda$ as both are one-dimensional. 
\end{proof}

\begin{remark}
The above result was also proven at about the same time by Bonnaf\'e--Rouquier \cite[9.5.10]{Bonnafe.C;Rouquier.R13Cellules-de-Calogero}.%
\end{remark}

\begin{para}
Now we come to our key argument to be used in \S3. For a closer analysis we first introduce the following notion.
\end{para}

\begin{defn}
We say that an Euler $k$-family $\sF$ is \word{good} if it is of one of the following types:
\begin{enum_thm}
\item $|\sF| = 1$.
\item $|\sF| = 2$ and at least one character in $\sF$ is supersingular.
\item $|\sF| = 3$ and all characters in $\sF$ are supersingular. 
\end{enum_thm}
Otherwise, we say that $\sF$ is \word{bad}. 
\end{defn}

\begin{prop} \label{trick}
Every good Euler $k$-family is already a Calogero--Moser $k$-family.
\end{prop}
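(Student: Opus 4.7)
The plan is to exploit the fact, noted in the paragraph preceding the statement, that the Calogero--Moser $k$-families refine the Euler $k$-families: if $\sF$ is an Euler $k$-family, then $\sF$ is a disjoint union of Calogero--Moser $k$-families. Thus it suffices to show that in each of the three ``good'' cases this refinement is trivial, i.e.\ that $\sF$ consists of a single Calogero--Moser block.

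Case (a) is immediate: a singleton Euler family, being partitioned by Calogero--Moser families, is itself a Calogero--Moser family.

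For case (b), write $\sF = \{\lambda,\mu\}$ and assume (without loss of generality) that $\lambda$ is supersingular. By Proposition \ref{supersingular_nonsingleton}, $\lambda$ lies in a non-singleton Calogero--Moser $k$-family $\sG$. Since $\sG \subs \sF$ and $|\sG| \geq 2 = |\sF|$, we get $\sG = \sF$, so $\sF$ is a Calogero--Moser $k$-family.

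For case (c), write $\sF = \{\lambda,\mu,\nu\}$ with all three characters supersingular. The Calogero--Moser $k$-families contained in $\sF$ partition $\sF$, and by Proposition \ref{supersingular_nonsingleton} each of these blocks has cardinality at least $2$, since each character of $\sF$ lies in a non-singleton Calogero--Moser $k$-family. But $3$ cannot be written as a sum of integers each $\geq 2$, so the only such partition is the trivial one $\{\sF\}$. Hence $\sF$ is itself a Calogero--Moser $k$-family.

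I expect no real obstacle here: the whole argument is a short combinatorial consequence of (i)~the refinement relation $\mrm{CM}_k \preceq \mrm{Eu}_k$ that was established just before the definition of good families, and (ii)~the supersingularity criterion of Proposition \ref{supersingular_nonsingleton}. The only mildly delicate point is remembering that in cases (b) and (c) one needs the non-singleton conclusion for \emph{every} supersingular character simultaneously, which is exactly what Proposition \ref{supersingular_nonsingleton} provides since it holds for all parameters $k$.
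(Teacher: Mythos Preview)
Your proof is correct and follows essentially the same approach as the paper: both arguments combine the refinement $\mrm{CM}_k \preceq \mrm{Eu}_k$ with Proposition~\ref{supersingular_nonsingleton} to rule out any nontrivial splitting of a good Euler family. The paper merges your cases (b) and (c) into a single contradiction argument (if $\sF$ split, some supersingular character would land in a singleton Calogero--Moser family), but this is only a cosmetic difference.
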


\begin{proof}
Let $\sF$ be a good Euler $k$-family. If $|\sF| = 1$, then this must already be a Calogero--Moser family since the Calogero--Moser families are finer than the Euler families. If $\sF$ is of the other two types, suppose it is not a Calogero--Moser family. Again, as the Calogero--Moser families are finer than the Euler families, the family $\sF$ would split into several Calogero--Moser families. But now the assumptions on $\sF$ imply that there would be a supersingular character in $\sF$ which would form a singleton Calogero--Moser family. This is, however, not possible by proposition \ref{supersingular_nonsingleton} and so $\sF$ must be already be a Calogero--Moser family.
\end{proof}

\begin{para}
For larger Euler families we cannot decide by such simple methods if they are already Calogero--Moser families or if they split into several Calogero--Moser families. In some cases, however, this is already enough for generic parameters as we will see in the next paragraph.
\end{para}

\section{Results}

\begin{paran}
Due to the explicit formulas given in \S\ref{supersingular_section} and \S\ref{euler_section} we can easily compute the bad generic Euler families for each exceptional group---all we need is just the character table and the invariant degrees of the group (see \ref{shephard_todd}). The following table \ref{bad_generic_euler_families} summarizes the results. %
\begin{table}[H] 
\footnotesize
\centering
\begin{minipage}[t]{0.215\linewidth}
\centering
\begin{tabular}[t]{|l|p{4em}|}
\hline
Group & Bad fam. \\
\hline \hline
$G_4$ &--\\\hline
$G_5$ &--\\\hline
$G_6$ &--\\\hline
$G_7$ & $3^2$ \\\hline
$G_8$ &--\\\hline
$G_9$ & $4^1$ \\\hline
$G_{10}$ &--\\\hline
$G_{11}$ & $3^8, 4^3$ \\\hline
$G_{12}$ & $4^1$ \\\hline
$G_{13}$ & $6^1$ \\\hline
$G_{14}$ & $3^3$ \\\hline
$G_{15}$ & $4^3$ \\\hline
\end{tabular}
\end{minipage}
\begin{minipage}[t]{0.3\linewidth}
\centering
\begin{tabular}[t]{|l|p{5.8em}|}
\hline
Group & Bad fam. \\
\hline \hline
$G_{16}$ & $5^1$ \\\hline
$G_{17}$ & $4^5, 5^2$ \\\hline
$G_{18}$ & $3^5, 5^3$ \\\hline
$G_{19}$ & $3^{20}, 4^{15}, 5^6, 6^5$ \\\hline
$G_{20}$ & $9^1$ \\\hline 
$G_{21}$ & $4^6, 6^3$ \\\hline
$G_{22}$ & $10^1$ \\\hline
$G_{23} = H_3$ &--\\\hline
$G_{24}$ &--\\\hline
$G_{25}$ &--\\\hline
$G_{26}$ &--\\\hline
$G_{27}$ & $3^2, 4^1, 6^2$ \\\hline
\end{tabular}
\end{minipage}
\begin{minipage}[t]{0.4\linewidth}
\begin{tabular}[t]{|l|p{11.2em}|}
\hline
Group & Bad fam. \\
\hline \hline
$G_{28} = F_4$ & $5^1$ \\\hline
$G_{29}$ & $4^2, 9^1$ \\\hline
$G_{30} = H_4$ & $16^1$ \\\hline
$G_{31}$ & $2^2$, $4^2$, $6^2$, $13^1$ \\\hline
$G_{32}$ & $4^3$ \\\hline
$G_{33}$ & $4^2, 7^2$ \\\hline
$G_{34}$ & $3^2, 4^2, 5^2, 6^2, 7^6, 9^3, 17^2, 19^1$ \\ \hline
$G_{35} = E_6$ & $2^2, 3^2, 5^1$ \\ \hline
$G_{36} = E_7$ & $3^8, 5^4$ \\ \hline
$G_{37} = E_8$ & $2^2, 3^{10}, 4^2, 5^2, 6^4, 18^1$ \\ \hline
\end{tabular}
\end{minipage} \caption{Bad generic Euler families for the exceptional groups.}  \label{bad_generic_euler_families}
\end{table}

\noindent Using \ref{trick}, \ref{conjecture_zariski_stuff}, and \ref{euler_variety} we immediately deduce from this table the following theorem.
\end{paran}

\begin{thm_boxed} \label{bad_generic_euler_families_thm}
For precisely the exceptional groups 
\[
W \in \lbrace G_4,G_5,G_6,G_8,G_{10}, G_{23}=H_3,G_{24},G_{25},G_{26} \rbrace
\]
there are no bad generic Euler families and so
\[
\mrm{Eu}_k = \mrm{CM}_k \quad \tn{for all} \ k \notin \Phi^{-1}\mrm{EuEx}(W) \;.
\] 
In particular, the generic Euler families are equal to the generic Calogero--Moser families and $\mrm{CMEx}(W) \subs \mrm{EuEx}(W)$. \qed
\end{thm_boxed}
 
 \begin{para}
We note that except for the group $G_5$ and $G_{10}$ all groups in \ref{bad_generic_euler_families_thm} are spetsial (see \cite[\S8]{Mal00-On-the-generic-d}).  
\end{para}

\begin{para}
A closer look at the generic Calogero--Moser families of the above groups and a comparison to the generic Rouquier families in the following sections \S\ref{g4} to \S\ref{g26} yields a proof of the following theorem.
 \end{para}
 
 \begin{thm_boxed} \label{main_theorem_2}
 For all the groups 
 \[
 W \in \lbrace G_4,G_5,G_6,G_8,G_{10}, G_{23}=H_3,G_{24},G_{25},G_{26} \rbrace
 \]
 the following holds:
 \begin{enum_thm}
 \item Martino's special parameter conjecture holds for all parameters $k \in \Phi^{-1}\mrm{CMGen}(W) \sups \Phi^{-1}\mrm{EuGen}(W)$.
 \item Martino's generic parameter conjecture holds---except for the group $G_{25}$ where it fails.
 \item $\Phi^{-1}\mrm{EuEx}(W)$ is $\sharp$-stable and 
 \[
\mrm{RouEx}(W) \subs \Phi^{-1}\mrm{EuEx}(W) \sups \Phi^{-1}\mrm{CMEx}(W) \;.
\]
\qed
 \end{enum_thm}
 \end{thm_boxed}

 \begin{paran}

Each of the upcoming sections is devoted to one particular of the above groups---starting with $G_4$ and $G_{25}$ because here something interesting happens. %
As there are no canonical explicit realizations of the exceptional groups, we usually---if nothing else is mentioned---include the realization used in \cite{CHEVIE-JM-4} and we will then also use the labeling of the irreducible characters of the reflection groups as in \cite{CHEVIE-JM-4} because a consistent labeling is crucial for the comparison with the results in \cite{Chlouveraki.MGAP-functions-for-th}. Mostly, an irreducible character $\chi$ can uniquely be labeled by $\phi_{d,b}$, where $d \dopgleich \chi(1)$ is the degree of $\chi$ and $b$ is the trailing degree of the fake degree $f_\chi(t)$ of $\chi$. Unfortunately, this \word{$(d,b)$-pair} does not always yield a unique label. According to more ore less fixed rules those characters with the same $(d,b)$-pair are labeled in \cite{CHEVIE-JM-4} with additional primes attached---and this is where the explicit realizations of the groups will be important. The labelings used in \cite{CHEVIE-JM-4} can be obtained using the command \texttt{CharNames(CharTable(W))}, where $W$ is an exceptional complex reflection group.

To obtain explicit knowledge about generic parameters we will also always provide an explicit presentation of the Euler variety. To simplify notations we have decomposed the Euler variety $\Phi^{-1} \mrm{EuEx}(\Gamma) \subs \bbC^{\ol{\boldsymbol{\Omega}}(\Gamma)}$ into orbits under the action of Young subgroups of the symmetric group $\rS_{|\ol{\boldsymbol{\Omega}}(\Gamma)|}$ on the projective space of hyperplanes
\[
\mrm{Gr}(\bbC^{\ol{\boldsymbol{\Omega}}(\Gamma)}, |\ol{\boldsymbol{\Omega}}(\Gamma)|-1) \cong \bbP(\bbC^{\ol{\boldsymbol{\Omega}}(\Gamma)}) \cong \bbC^{\ol{\boldsymbol{\Omega}}(\Gamma)}/\bbC^\times
\]
by coordinate permutations. So, for example, if $\Gamma$ has two conjugacy classes of reflections of order $2$, and $\Omega_1$ and $\Omega_2$ denote the corresponding reflection hyperplane orbits, then the parameters in $\bbC^{\ol{\boldsymbol{\Omega}}(\Gamma)}$ are of the form $k \dopgleich (k_{1,0}, k_{1,1}, k_{2,0},k_{2,1})$ and the notation
\[
\Sigma_{\lbrace \lbrace 1,2 \rbrace, \lbrace 3,4 \rbrace \rbrace}.(1,2,3,4)
\]
denotes the orbit of the hyperplane $k_{1,0} + 2k_{1,1} + 3k_{2,0} + 4k_{2,1}$ under the Young subgroup  $\Sigma_{\lbrace \lbrace 1,2 \rbrace, \lbrace 3,4 \rbrace \rbrace}$ of $\rS_4$. The hyperplanes in this orbit are
\[
\begin{array}{l}
k_{1,0} + 2k_{1,1} + 3k_{2,0} + 4k_{2,1} \\
2k_{1,0} + k_{1,1} + 3k_{2,0} + 4k_{2,1} \\
k_{1,0} + 2 k_{1,1} + 4k_{2,0} + 3k_{2,1} \\
2k_{1,0} + k_{1,1} + 4k_{2,0} + 3k_{2,1} \;.
\end{array}
\]
\end{paran} %

\vspace{-10pt}
\subsection{$G_4$} \label{g4}

\begin{paran}
The group $G_4$ is the smallest exceptional reflection group and is isomorphic to $\SL_2(\bbF_3)$, so its order is equal to 24. It can be realized as the matrix group over $\bbQ(\zeta)$, where $\zeta \dopgleich \zeta_3$, generated by the reflections
\[
s \dopgleich \begin{pmatrix} 1 & 0 \\ 0 & \zeta \end{pmatrix}, \quad t \dopgleich \frac{1}{3} \begin{pmatrix} 2\zeta+1 & \zeta -1 \\ 2\zeta-2 & \zeta+2 \end{pmatrix} \;. 
\]
There are two conjugacy classes of reflections: the one of $s$ and the one of $s^2$ (both of order $3$ and length $4$).
Hence, there is just one orbit $\Omega_1$ of reflection hyperplanes and so the parameters for the restricted rational Cherednik algebra are $k \dopgleich (k_{1,0},k_{1,1}, k_{1,2})$. We can now compute that
\[ 
\begin{array}{lcl}
c_k(s) & = & (-\zeta-2)k_{1,0} + (-\zeta+1)k_{1,1} + (2 \zeta+1)k_{1,2} \;, \\
c_k(s^2) & = &(\zeta-1)k_{1,0} + (\zeta+2)k_{1,1} + (-2\zeta-1)k_{1,2} \;.
\end{array}
\]
The group $G_4$ has 7 irreducible characters and from the character table we can easily compute the table \ref{g4_data}.
\begin{table}[H]
\centering
\footnotesize
\begin{minipage}[t]{0.45\linewidth}
\centering
\begin{tabular}[t]{|c|c|c|c|} 
\hline
$\lambda$ & $\lambda(s)$ & $\lambda(s^2)$ & $\omega_{\lambda}(\check{\mrm{eu}}_k)$ \\ \hline \hline
$\phi_{1,0}$ & $1$ & $1$ & $12k_{1,0} - 12k_{1,1}$ \\ \hline
$\phi_{1,4}$ & $\zeta$ & $-\zeta - 1$ & $0$ \\ \hline
$\phi_{1,8}$ & $-\zeta - 1$ & $\zeta$ & $12k_{1,0} - 12k_{1,2}$ \\ \hline
$\phi_{2,5}$ & $-1$ & $-1$ & $6k_{1,0} - 6k_{1,2}$ \\ \hline
\end{tabular}
\end{minipage}
\begin{minipage}[t]{0.45\linewidth}
\centering
\begin{tabular}[t]{|c|c|c|c|} 
\hline
$\lambda$ & $\lambda(s)$ & $\lambda(s^2)$ & $\omega_{\lambda}(\check{\mrm{eu}}_k)$ \\ \hline \hline
$\phi_{2,3}$ & $-\zeta$ & $\zeta + 1$ & $12k_{1,0} - 6k_{1,1} - 6k_{1,2}$ \\ \hline
$\phi_{2,1}$ & $\zeta + 1$ & $-\zeta$ & $6k_{1,0} - 6k_{1,1}$ \\ \hline
$\phi_{3,2}$ & $0$ & $0$ & $8k_{1,0} - 4k_{1,1} - 4k_{1,2}$  \\ \hline
\end{tabular} 
\end{minipage}
\caption{Character data for $G_4$.} \label{g4_data}
\end{table} \vspace{-10pt}

\noindent This table immediately reveals that the condition $\chi_\mu(1) \lambda(s) = \lambda(1)\chi_\mu(s)$ is \textit{never} satisfied for $\lambda \neq \mu$, implying that the generic Euler families are singletons. But as $\mrm{CM}_k$ is a refinement of $\mrm{Eu}_k$, we must already have 
\[
\mrm{Eu}_k = \mrm{CM}_k \quad \tn{for all} \quad k \notin \Phi^{-1}\mrm{EuEx}(G_4) \;.
\]

By considering non-zero differences $\omega_\lambda(\check{\mrm{eu}}_k) - \omega_\mu(\check{\mrm{eu}}_k)$ we can furthermore compute from table \ref{g4_data} that $\Phi^{-1}\mrm{EuEx}(G_{4})$ is the union of the following six hyperplanes, which form two orbits under the symmetric group $\rS_3$:
\begin{table}[H]
\centering
\footnotesize
\begin{tabular}{|l|l|l|}
\hline
Label & Orbit & Length \\
\hline \hline
$1$ & $\rS_3.(0,1,-1)$ & $3$ \\
\hline
$2$ & $\rS_3.(1,-1,1)$ & $3$ \\
\hline
\end{tabular}
\caption{$\Phi^{-1}\mrm{EuEx}(G_4)$.}
\end{table}

\noindent The operation $\sharp$ is described by the action of the cycle $(2,3) \in \rS_3$ so that all orbits and thus $\Phi^{-1}\mrm{EuEx}(G_4)$ are stable under $\sharp$. A comparison with the data in \cite{Chlouveraki.MGAP-functions-for-th} shows that the hyperplanes above are precisely the six essential hyperplanes for $G_4$ so that $\mrm{RouEx}(G_4) = \Phi^{-1}\mrm{EuEx}(G_4)$ and that the generic Rouquier families are also singletons. This proves \ref{main_theorem_2} for $G_4$.
\end{paran}

\begin{remark}
We emphasize that Bellamy already proved in \cite{Bel-Singular-CM} that the Calogero--Moser families for $G_4$ are singletons for generic parameters. His proof was more involved however and did not directly give an explicit open subset of \textit{generic} parameters. 
\end{remark}

\begin{remark}
More elaborate computational methods have been used in \cite{Thiel.UOn-restricted-ration} to compute the Calogero--Moser families on each of the hyperplanes of $\Phi^{-1}\mrm{EuEx}(G_4)$. It turns out that $\mrm{RouEx}(G_4) = \Phi^{-1}\mrm{EuEx}(G_4) = \Phi^{-1}\mrm{CMEx}(G_4)$ and that on each of the hyperplanes the Calogero--Moser families coincide with the Rouquier families. This indeed confirms Martino's conjecture in its full form for $G_4$. Due to the additional computational ingredients involved in this approach, we will postpone the discussion to a future article.
\end{remark}

\vspace{-10pt}
\subsection{$G_{25}$}
\begin{paran}
The group $G_{25}$ is of order 648 and can be realized as the matrix group over $\bbQ(\zeta)$, where $\zeta \dopgleich \zeta_3$, generated by the reflections
\[
s \dopgleich \begin{pmatrix} 1 & 0 & 0 \\ 0 & 1 & 0 \\ 0 & 0 & \zeta \end{pmatrix}, \quad t \dopgleich \frac{1}{3} \begin{pmatrix} \zeta+2 & \zeta-1 & \zeta-1 \\ \zeta-1 & \zeta+2 & \zeta-1 \\ \zeta-1 & \zeta-1 & \zeta+2 \end{pmatrix}, \quad u \dopgleich \begin{pmatrix} 1 & 0 & 0 \\ 0 & \zeta & 0 \\ 0 & 0 & 1 \end{pmatrix} \;.
\]
There are two conjugacy classes of reflections: the one of $s$ and the one of $s^2$ (both of order $3$ and length $12$). As in \S\ref{g4} we deduce that 
the parameters for the restricted rational Cherednik algebra are $k \dopgleich (k_{1,0},k_{1,1}, k_{1,2})$ and we can now compute that
\[
\begin{array}{lcl}
c_k(s) &=& (-\zeta-2)k_{1,0} + (-\zeta+1)k_{1,1} + (2\zeta+1)k_{1,2} \;, \\
c_k(s^2) &=& (\zeta-1)k_{1,0} + (\zeta+2)k_{1,1} + (-2\zeta-1)k_{1,2} \;.
\end{array}
\]
The group $G_{25}$ has 24 irreducible characters and from the character table we can compute the table \ref{g25_data} on page \pageref{g25_data}.
\begin{table}[htbp] 
\begin{minipage}[t]{0.49\linewidth}
\scriptsize
\begin{tabular}[t]{|c|c|c|c|c|}
\hline
$\lambda$ & $\lambda(s)$ & $\lambda(s^2)$ & $\omega_{\lambda}(\check{\mrm{eu}}_k)$ & ss \\ \hline \hline
$\phi_{1,0}$ & $1$ & $1$ & $36k_{1,0} - 36k_{1,1}$ & n \\ \hline
$\phi_{1,24}$ & $-\zeta - 1$ & $\zeta$ & $36k_{1,0} - 36k_{1,2}$ & n \\ \hline
$\phi_{1,12}$ & $\zeta$ & $-\zeta - 1$ & $0$ & n \\ \hline
$\phi_{2,15}$ & $-1$ & $-1$ & $18k_{1,0} - 18k_{1,2}$ & n \\ \hline
$\phi_{2,9}$ & $-\zeta$ & $\zeta + 1$ & $36k_{1,0} - 18k_{1,1} - 18k_{1,2}$ & n \\ \hline
$\phi_{2,3}$ & $\zeta + 1$ & $-\zeta$ & $18k_{1,0} - 18k_{1,1}$ & n \\ \hline
$\phi_{3,6}$ & $0$ & $0$ & $24k_{1,0} - 12k_{1,1} - 12k_{1,2}$ & y \\ \hline
$\phi_{3,1}$ & $\zeta + 2$ & $-\zeta + 1$ & $24k_{1,0} - 24k_{1,1}$ & n \\ \hline
$\phi_{3,5}'$ & $-\zeta + 1$ & $\zeta + 2$ & $36k_{1,0} - 24k_{1,1} - 12k_{1,2}$ & n \\ \hline
$\phi_{3,13}''$ & $-2\zeta - 1$ & $2\zeta + 1$ & $36k_{1,0} - 12k_{1,1} - 24k_{1,2}$ & n \\ \hline
$\phi_{3,17}$ & $-\zeta - 2$ & $\zeta - 1$ & $24k_{1,0} - 24k_{1,2}$ & n \\ \hline
$\phi_{3,5}''$ & $2\zeta + 1$ & $-2\zeta - 1$ & $12k_{1,0} - 12k_{1,1}$ & n \\ \hline
\end{tabular}
\end{minipage}
\begin{minipage}[t]{0.48\linewidth}
\scriptsize
\begin{tabular}[t]{|c|c|c|c|c|}
\hline
$\lambda$ & $\lambda(s)$ & $\lambda(s^2)$ & $\omega_{\lambda}(\check{\mrm{eu}}_k)$ & ss \\ \hline \hline
$\phi_{3,13}'$ & $\zeta - 1$ & $-\zeta - 2$ & $12k_{1,0} - 12k_{1,2}$ & n \\ \hline
$\phi_{6,4}''$ & $2\zeta + 1$ & $-2\zeta - 1$ & $18k_{1,0} - 12k_{1,1} - 6k_{1,2}$ & n \\ \hline
$\phi_{6,10}$ & $-\zeta - 2$ & $\zeta - 1$ & $24k_{1,0} - 6k_{1,1} - 18k_{1,2}$ & n \\ \hline
$\phi_{6,8}'$ & $\zeta - 1$ & $-\zeta - 2$ & $18k_{1,0} - 6k_{1,1} - 12k_{1,2}$ & n \\ \hline
$\phi_{6,2}$ & $\zeta + 2$ & $-\zeta + 1$ & $24k_{1,0} - 18k_{1,1} - 6k_{1,2}$ & n \\ \hline
$\phi_{6,8}''$ & $-2\zeta - 1$ & $2\zeta + 1$ & $30k_{1,0} - 12k_{1,1} - 18k_{1,2}$ & n \\ \hline
$\phi_{6,4}'$ & $-\zeta + 1$ & $\zeta + 2$ & $30k_{1,0} - 18k_{1,1} - 12k_{1,2}$ & n \\ \hline
$\phi_{8,3}$ & $2$ & $2$ & $27k_{1,0} - 18k_{1,1} - 9k_{1,2}$ & n \\ \hline
$\phi_{8,9}$ & $-2\zeta - 2$ & $2\zeta$ & $27k_{1,0} - 9k_{1,1} - 18k_{1,2}$ & n \\ \hline
$\phi_{8,6}$ & $2\zeta$ & $-2\zeta - 2$ & $18k_{1,0} - 9k_{1,1} - 9k_{1,2}$ & n \\ \hline
$\phi_{9,7}$ & $0$ & $0$ & $24k_{1,0} - 12k_{1,1} - 12k_{1,2}$ & y \\ \hline
$\phi_{9,5}$ & $0$ & $0$ & $24k_{1,0} - 12k_{1,1} - 12k_{1,2}$ & y \\ \hline
\end{tabular}
\end{minipage} \caption{Character data for $G_{25}$.} \label{g25_data}
\end{table}
We can immediately see in this table that the three characters $\phi_{3,6}$, $\phi_{9,7}$, and $\phi_{9,5}$, lie in a common Euler $k$-family for any $k$ as the character values on reflections is always zero. A consideration of the values $\omega_\lambda(\check{\mrm{eu}}_k)$ listed in this table shows that the family $\lbrace \phi_{3,6}, \phi_{9,7}, \phi_{9,5} \rbrace$ indeed forms a generic Euler family and furthermore that this is the only non-singleton generic Euler family. All characters different from $\phi_{3,6}$, $\phi_{9,7}$, and $\phi_{9,5}$, thus form a singleton Calogero--Moser $k$-family for all $k \notin \Phi^{-1}\mrm{EuEx}(G_{25})$ and it remains to decide whether the generic Euler family $\lbrace \phi_{3,6}, \phi_{9,7}, \phi_{9,5} \rbrace$ is already a Calogero--Moser family or if this family splits into several Calogero--Moser families. The concept of supersingularity resolves this question. Namely, in the column denoted by \textit{ss} in table \ref{g25_data} we have listed if the character is supersingular (symbolized by \textit{y}) or not (symbolized by \textit{n}), and we see that the three characters $\phi_{3,6}$, $\phi_{9,7}$, and $\phi_{9,5}$ are supersingular. So, these three characters form a good generic Euler family and now we know from \ref{trick} that this is indeed a Calogero--Moser $k$-family 
for $k \notin \Phi^{-1}\mrm{EuEx}(G_{25})$. Hence,
\[
\mrm{Eu}_k = \mrm{CM}_k \quad \tn{for all} \quad k \notin \Phi^{-1}\mrm{EuEx}(G_{25}) \;.
\]

From table \ref{g25_data} we can also compute that $\Phi^{-1}\mrm{EuEx}(G_{25})$ is the union of the following 30 hyperplanes, which form six orbits under the group $\rS_3$:
\begin{table}[H]
\centering
\footnotesize
\begin{minipage}[t]{0.35\linewidth}
\centering
\begin{tabular}{|l|l|l|}
\hline
Label & Orbit & Length \\
\hline \hline
$1$ & $\rS_3.(0, 1, -1)$ & $3$ \\\hline
$2$ & $\rS_3.(1, -6, 5)$ & $6$ \\\hline
$3$ & $\rS_3.(1, -4, 3)$ & $6$ \\\hline
\end{tabular}
\end{minipage}
\begin{minipage}[t]{0.35\linewidth}
\centering
\begin{tabular}{|l|l|l|}
\hline
Label & Orbit & Length \\
\hline \hline
$4$ & $\rS_3.(1, -3, 2)$ & $6$ \\\hline
$5$ & $\rS_3.(1, -2, 1)$ & $3$ \\\hline
$6$ & $\rS_3.(2, -5, 3)$ & $6$ \\ \hline
\end{tabular}
\end{minipage} \caption{$\Phi^{-1}\mrm{EuEx}(G_{25})$.}
\end{table}

\noindent Again the operation $\sharp$ is described by the action of the cycle $(2,3) \in \rS_3$ so that all orbits and thus $\Phi^{-1}\mrm{EuEx}(G_{25})$ are stable under $\sharp$. A comparison with the data in \cite{Chlouveraki.MGAP-functions-for-th} shows that the 12 hyperplanes in the orbits $1$, $4$, and $5$ are precisely the essential hyperplanes for $G_{25}$ so that $\mrm{RouEx}(G_{25}) \subs \Phi^{-1}\mrm{EuEx}(G_{25})$. The data in \cite{Chlouveraki.MGAP-functions-for-th} shows however that although there is also just one non-singleton generic Rouquier family, this family is equal to $\lbrace \phi_{9,7},\phi_{9,5} \rbrace$. Hence, in contrast to the generic Calogero--Moser families, the character $\phi_{3,6}$ lies in a singleton generic Rouquier family and this shows that the generic Calogero--Moser families are \textit{unions} of generic Rouquier families but not equal to them. This still proves Martino's special parameter conjecture for all $k \in \Phi^{-1}\mrm{CMGen}(G_{25})$ but it disproves Martino's generic parameter conjecture!
\qed
\end{paran}

\subsection{$G_5$}
\begin{paran}
The group $G_5$ is of order 72 and can be realized as the matrix group over $\bbQ(\zeta)$, where $\zeta \dopgleich \zeta_3$, generated by the reflections
\[
s \dopgleich \begin{pmatrix} 1 & 0 \\ 0 & \zeta \end{pmatrix}, \quad t \dopgleich \frac{1}{3} \begin{pmatrix} \zeta+2 & -\zeta+1 \\ -2\zeta+2 & 2\zeta+1 \end{pmatrix} \;. 
\]
There are four conjugacy classes of reflections: the one of $s$, of $s^2$, of $t$, and of $t^2$ (all of order $3$ and length $4$). 
Hence, there are two orbits of reflection hyperplanes, namely the orbit $\Omega_1$ of the reflection hyperplane of $s$, and the orbit $\Omega_2$ of the reflection hyperplane of $t$. The parameters for the restricted rational Cherednik algebra are therefore $k \dopgleich (k_{1,0},k_{1,1}, k_{1,2}, k_{2,0}, k_{2,1},k_{2,2})$ and we can now compute that
\[ 
\begin{array}{lcl}
c_k(s) &= &(-\zeta - 2)k_{1,0} + (-\zeta + 1)k_{1,1} + (2\zeta + 1)k_{1,2} \;, \\
c_k(s^2) &= &(\zeta - 1)k_{1,0} + (\zeta + 2)k_{1,1} + (-2\zeta - 1)k_{1,2}\;, \\
c_k(t) &=& (-\zeta - 2)k_{2,0} + (-\zeta + 1)k_{2,1} + (2\zeta + 1)k_{2,2} \;, \\
c_k(t^2) &=& (\zeta - 1)k_{2,0} + (\zeta + 2)k_{2,1} + (-2\zeta - 1)k_{2,2} \;.
\end{array}
\]
The group $G_5$ has 21 irreducible characters and from the character table of $G_5$ we can compute the table \ref{g5_data}.
\begin{table}[H] 
\centering
\footnotesize
\begin{tabular}{|c|c|c|c|c|}
\hline
$\lambda$ & $\lambda(s)$ & $\lambda(s^2)$ & $\omega_{\lambda}(\check{\mrm{eu}}_k)$ & ss \\ \hline \hline
$\phi_{1,0}$ & $1$ & $1$ & $12k_{1,0} - 12k_{1,1} + 12k_{2,0} - 12k_{2,1}$ & n \\ \hline
$\phi_{1,12}''$ & $-\zeta - 1$ & $\zeta$ & $12k_{1,0} - 12k_{1,2}$ & n \\ \hline
$\phi_{1,16}$ & $-\zeta - 1$ & $-\zeta - 1$ & $12k_{1,0} - 12k_{1,2} + 12k_{2,0} - 12k_{2,2}$ & n \\ \hline
$\phi_{1,4}'$ & $1$ & $\zeta$ & $12k_{1,0} - 12k_{1,1}$ & n \\ \hline
$\phi_{1,8}''$ & $\zeta$ & $\zeta$ & $0$ & n \\ \hline
$\phi_{1,8}'$ & $1$ & $-\zeta - 1$ & $12k_{1,0} - 12k_{1,1} + 12k_{2,0} - 12k_{2,2}$ & n \\ \hline
$\phi_{1,8}'''$ & $-\zeta - 1$ & $1$ & $12k_{1,0} - 12k_{1,2} + 12k_{2,0} - 12k_{2,1}$ & n \\ \hline
$\phi_{1,4}''$ & $\zeta$ & $1$ & $12k_{2,0} - 12k_{2,1}$ & n \\ \hline
$\phi_{1,12}'$ & $\zeta$ & $-\zeta - 1$ & $12k_{2,0} - 12k_{2,2}$ & n \\ \hline
$\phi_{2,9}$ & $-1$ & $-1$ & $6k_{1,0} - 6k_{1,2} + 6k_{2,0} - 6k_{2,2}$ & n \\ \hline
$\phi_{2,7}''$ & $-\zeta$ & $-1$ & $12k_{1,0} - 6k_{1,1} - 6k_{1,2} + 6k_{2,0} - 6k_{2,2}$ & n \\ \hline
$\phi_{2,3}'$ & $-\zeta$ & $\zeta + 1$ & $12k_{1,0} - 6k_{1,1} - 6k_{1,2} + 6k_{2,0} - 6k_{2,1}$ & n \\ \hline
$\phi_{2,5}'''$ & $\zeta + 1$ & $-1$ & $6k_{1,0} - 6k_{1,1} + 6k_{2,0} - 6k_{2,2}$ & n \\ \hline
$\phi_{2,3}''$ & $\zeta + 1$ & $-\zeta$ & $6k_{1,0} - 6k_{1,1} + 12k_{2,0} - 6k_{2,1} - 6k_{2,2}$ & n \\ \hline
$\phi_{2,5}''$ & $-\zeta$ & $-\zeta$ & $12k_{1,0} - 6k_{1,1} - 6k_{1,2} + 12k_{2,0} - 6k_{2,1} - 6k_{2,2}$ & n \\ \hline
$\phi_{2,1}$ & $\zeta + 1$ & $\zeta + 1$ & $6k_{1,0} - 6k_{1,1} + 6k_{2,0} - 6k_{2,1}$ & n \\ \hline
$\phi_{2,7}'$ & $-1$ & $-\zeta$ & $6k_{1,0} - 6k_{1,2} + 12k_{2,0} - 6k_{2,1} - 6k_{2,2}$ & n \\ \hline
$\phi_{2,5}'$ & $-1$ & $\zeta + 1$ & $6k_{1,0} - 6k_{1,2} + 6k_{2,0} - 6k_{2,1}$ & n \\ \hline
$\phi_{3,6}$ & $0$ & $0$ & $8k_{1,0} - 4k_{1,1} - 4k_{1,2} + 8k_{2,0} - 4k_{2,1} - 4k_{2,2}$ & y \\ \hline
$\phi_{3,4}$ & $0$ & $0$ & $8k_{1,0} - 4k_{1,1} - 4k_{1,2} + 8k_{2,0} - 4k_{2,1} - 4k_{2,2}$ & y \\ \hline
$\phi_{3,2}$ & $0$ & $0$ & $8k_{1,0} - 4k_{1,1} - 4k_{1,2} + 8k_{2,0} - 4k_{2,1} - 4k_{2,2}$ & y \\ \hline
\end{tabular}
\caption{Character data for $G_{5}$.} \label{g5_data}
\end{table}

\noindent From this table we see that there is only one non-singleton generic Euler family, namely $\lbrace \phi_{3,6}, \phi_{3,4}, \phi_{3,2} \rbrace$. Furthermore, we see that these three characters are all supersingular so that this is a good generic Euler family and thus already a Calogero--Moser $k$-family for all $k \notin \Phi^{-1}\mrm{EuEx}(G_5)$. 
Hence,
\[
\mrm{Eu}_k = \mrm{CM}_k \quad \tn{for all} \quad k \notin \Phi^{-1}\mrm{EuEx}(G_5) \;.
\]

From table \ref{g5_data} we can also compute that $\Phi^{-1}\mrm{EuEx}(\Gamma)$ is the union of the following 69 hyperplanes, which form six orbits under the Young subgroup $\Sigma \dopgleich \Sigma_{\lbrace \lbrace 1,2,3 \rbrace, \lbrace 4,5,6 \rbrace \rbrace}$ of $\rS_6$:
\begin{table}[H]
\centering
\footnotesize
\begin{minipage}[t]{0.42\linewidth}
\centering
\begin{tabular}{|l|l|l|}
\hline
Label & Orbit & Length \\
\hline \hline
$1$a & $\Sigma.(0, 1, -1, 0, 0, 0)$ & $3$ \\ \hline
$1$b & $\Sigma.(0, 0, 0, 0, 1, -1)$ & $3$ \\ \hline
$2$ & $\Sigma.(0, 1, -1, 0, -1, 1)$ & $18$ \\ \hline
\end{tabular}
\end{minipage}
\begin{minipage}[t]{0.42\linewidth}
\centering
\begin{tabular}{|l|l|l|}
\hline
Label & Orbit & Length \\
\hline \hline
$3$a & $\Sigma.(0, 1, -1, -2, 1, 1)$ & $18$ \\ \hline
$3$b & $\Sigma.(1, -2, 1, 0, -1, 1)$ & $18$ \\ \hline
$4$ & $\Sigma.(1, -2, 1, -2, 1, 1)$ & $9$ \\ \hline
\end{tabular} 
\end{minipage} \caption{$\Phi^{-1}\mrm{EuEx}(G_5)$.}
\end{table} 

\noindent The operation $\sharp$ is described by the action of the permutation $(2,3)(5,6) \in \Sigma$ so that all orbits and thus $\Phi^{-1}\mrm{EuEx}(G_5)$ are stable under $\sharp$. A comparison with the data in \cite{Chlouveraki.MGAP-functions-for-th} shows that the 24 hyperplanes in the orbits $1$a, $1$b, $4$, and in the suborbit $\langle (1,2,3), (4,5,6) \rangle.(0,1,-1,0,-1,1)$ of orbit $2$ are precisely the essential hyperplanes of $G_5$ so that $\mrm{RouEx}(G_5) \subs \Phi^{-1}\mrm{EuEx}(G_5)$. Furthermore, the data in \cite{Chlouveraki.MGAP-functions-for-th} shows that the generic Rouquier families coincide with the generic Calogero--Moser families just determined. This proves \ref{main_theorem_2} for $G_5$. \qed
\end{paran}

\subsection{$G_6$}

\begin{paran}
The group $G_6$ is of order 48 and can be realized as the matrix group over $\bbQ(\zeta)$, where $\zeta \dopgleich \zeta_{12}$, generated by the reflections
\[
s \dopgleich \frac{1}{3} \begin{pmatrix} -\zeta^3 + 2\zeta & -\zeta^3 + 2\zeta \\ -2\zeta^3 + 4\zeta & \zeta^3 - 2\zeta \end{pmatrix}, \quad t \dopgleich \begin{pmatrix}  1 & 0 \\ 0 & \zeta^2 -1 \end{pmatrix} \;.
\]
There are three conjugacy classes of reflections: the one of $s$ (of order $2$ and length $4$), and the ones of $t$ and $t^2$ (both of order $3$ and length $4$). 
Hence, there are two orbits of reflection hyperplanes, namely the orbit $\Omega_1$ of the reflection hyperplane of $s$, and the orbit $\Omega_2$ of the reflection hyperplane of $t$. The parameters for the restricted rational Cherednik algebra are therefore $k \dopgleich (k_{1,0},k_{1,1}, k_{2,0}, k_{2,1},k_{2,2})$ and we can now compute that
\[ 
\begin{array}{lcl}
c_k(s) & = & -2k_{1,0} + 2k_{1,1} \;, \\
c_k(t) & = & (-\zeta^2 - 1)k_{2,0} + (-\zeta^2 + 2)k_{2,1} + (2\zeta^2 - 1)k_{2,2} \;, \\
c_k(t^2) & = & (\zeta^2 - 2)k_{2,0} + (\zeta^2 + 1)k_{2,1} + (-2\zeta^2 + 1)k_{2,2} \;.
\end{array}
\]
The group $G_6$ has 14 irreducible characters and from the character table we can compute the table \ref{g6_data}.
\begin{table}[H] 
\centering
\scriptsize
\begin{tabular}{|c|c|c|c|c|c|}
\hline
$\lambda$ & $\lambda(s)$ & $\lambda(t)$ & $\lambda(t^2)$ & $\omega_{\lambda}(\check{\mrm{eu}}_k)$ & ss \\ \hline \hline
$\phi_{1,0}$ & $1$ & $1$ & $1$ & $12k_{1,0} - 12k_{1,1} + 12k_{2,0} - 12k_{2,1}$ & n \\ \hline
$\phi_{1,4}$ & $1$ & $\zeta^2 - 1$ & $-\zeta^2$ & $12k_{1,0} - 12k_{1,1}$ & n \\ \hline
$\phi_{1,8}$ & $1$ & $-\zeta^2$ & $\zeta^2 - 1$ & $12k_{1,0} - 12k_{1,1} + 12k_{2,0} - 12k_{2,2}$ & n \\ \hline
$\phi_{1,6}$ & $-1$ & $1$ & $1$ & $12k_{2,0} - 12k_{2,1}$ & n \\ \hline
$\phi_{1,10}$ & $-1$ & $\zeta^2 - 1$ & $-\zeta^2$ & $0$ & n \\ \hline
$\phi_{1,14}$ & $-1$ & $-\zeta^2$ & $\zeta^2 - 1$ & $12k_{2,0} - 12k_{2,2}$ & n \\ \hline
$\phi_{2,5}''$ & $0$ & $-1$ & $-1$ & $6k_{1,0} - 6k_{1,1} + 6k_{2,0} - 6k_{2,2}$ & y \\ \hline
$\phi_{2,3}''$ & $0$ & $-\zeta^2 + 1$ & $\zeta^2$ & $6k_{1,0} - 6k_{1,1} + 12k_{2,0} - 6k_{2,1} - 6k_{2,2}$ & y \\ \hline
$\phi_{2,3}'$ & $0$ & $\zeta^2$ & $-\zeta^2 + 1$ & $6k_{1,0} - 6k_{1,1} + 6k_{2,0} - 6k_{2,1}$ & y \\ \hline
$\phi_{2,7}$ & $0$ & $-1$ & $-1$ & $6k_{1,0} - 6k_{1,1} + 6k_{2,0} - 6k_{2,2}$ & y \\ \hline
$\phi_{2,1}$ & $0$ & $\zeta^2$ & $-\zeta^2 + 1$ & $6k_{1,0} - 6k_{1,1} + 6k_{2,0} - 6k_{2,1}$ & y \\ \hline
$\phi_{2,5}'$ & $0$ & $-\zeta^2 + 1$ & $\zeta^2$ & $6k_{1,0} - 6k_{1,1} + 12k_{2,0} - 6k_{2,1} - 6k_{2,2}$ & y \\ \hline
$\phi_{3,2}$ & $1$ & $0$ & $0$ & $8k_{1,0} - 8k_{1,1} + 8k_{2,0} - 4k_{2,1} - 4k_{2,2}$ & n \\ \hline
$\phi_{3,4}$ & $-1$ & $0$ & $0$ & $4k_{1,0} - 4k_{1,1} + 8k_{2,0} - 4k_{2,1} - 4k_{2,2}$ & n \\ \hline
\end{tabular}
\caption{Character data for $G_{6}$.} \label{g6_data}
\end{table}
\noindent From this table we see that the only non-singleton generic Euler families are
\[
\lbrace \phi_{2,5}'', \phi_{2,7} \rbrace, \; \lbrace \phi_{2,3}'', \phi_{2,5}' \rbrace, \; \lbrace \phi_{2,3}', \phi_{2,1} \rbrace 
\]
and that all these characters are supersingular. Hence, these generic Euler families are good and thus already Calogero--Moser $k$-families for all $k \notin \Phi^{-1}\mrm{EuEx}(G_6)$ by \ref{trick}. This shows that
\[
\mrm{Eu}_k = \mrm{CM}_k \quad \tn{for all} \quad k \notin \Phi^{-1}\mrm{EuEx}(\Gamma) \;.
\]

From table \ref{g6_data} we can also compute that $\Phi^{-1}\mrm{EuEx}(G_6)$ is the union of the following 22 hyperplanes, which form five orbits under the Young subgroup $\Sigma \dopgleich \Sigma_{ \lbrace \lbrace 1,2 \rbrace, \lbrace 3,4,5 \rbrace \rbrace}$ of $\rS_5$: 
\begin{table}[H]
\centering
\footnotesize
\begin{minipage}[t]{0.4\linewidth}
\centering
\begin{tabular}[t]{|l|l|l|}
\hline
Label & Orbit & Length \\
\hline \hline
$1$a & $\Sigma.(0,0,0,1,-1)$ & $3$ \\\hline
$1$b & $\Sigma.(1,-1,0,0,0)$ & $1$ \\\hline
$2$ & $\Sigma.(1,-1,0,-1,1)$ & $6$ \\\hline
\end{tabular}
\end{minipage}
\begin{minipage}[t]{0.4\linewidth}
\centering
\begin{tabular}[t]{|l|l|l|}
\hline
Label & Orbit & Length \\
\hline \hline
$3$ & $\Sigma.(1,-1,-2,1,1)$ & $6$ \\\hline
$4$ & $\Sigma.(2,-2,-2,1,1)$ & $6$ \\ \hline
\end{tabular}
\end{minipage} \caption{$\Phi^{-1}\mrm{EuEx}(G_6)$.}
\end{table} 

\noindent The operation $\sharp$ is described by the action of the cycle $(4,5) \in \Sigma$ so that all orbits and thus $\Phi^{-1}\mrm{EuEx}(G_6)$ are stable under $\sharp$. A comparison with the data in \cite{Chlouveraki.MGAP-functions-for-th} shows that the 16 hyperplanes in orbits $1$a, $1$b, $2$, and $3$ are precisely the essential hyperplanes of $G_6$. Furthermore, the generic Rouquier families coincide with the generic Calogero--Moser families. This proves \ref{main_theorem_2} for $G_6$. \qed
\end{paran}

\subsection{$G_8$}

\begin{paran}
The group $G_8$ is of order 96 and can be realized as the matrix group over $\bbQ(\zeta)$, $\zeta \dopgleich \zeta_4$, generated by the reflections 
\[
s \dopgleich \begin{pmatrix} 1 & 0 \\ 0 & \zeta \end{pmatrix}, \quad t \dopgleich \frac{1}{2} \begin{pmatrix} \zeta+1 & \zeta-1 \\ \zeta-1 & \zeta+1 \end{pmatrix} \;. 
\]
There are three conjugacy classes of reflections: the ones of $s$ and $s^2$ (both of order $4$ and length $6$), and the one of $s^3$ (of order $2$ and length $6$). Hence, there is just one orbit of reflection hyperplanes, namely the orbit $\Omega_1$ of the reflection hyperplane of $s$. The parameters for the restricted rational Cherednik algebra are therefore $(k_{1,j})_{0 \leq j \leq 3}$ and we can now compute that
\[ 
\begin{array}{lcl}
c_k(s) & = & (-\zeta - 1)k_{1,0} + (-\zeta + 1)k_{1,1} + (\zeta + 1)k_{1,2} + (\zeta - 1)k_{1,3} \;, \\
c_k(s^2) & = & -2k_{1,0} + 2k_{1,1} - 2k_{1,2} + 2k_{1,3} \;, \\
c_k(s^3) & = & (\zeta - 1)k_{1,0} + (\zeta + 1)k_{1,1} + (-\zeta + 1)k_{1,2} + (-\zeta - 1)k_{1,3} \;.
\end{array}
\]
The group $G_8$ has 16 irreducible characters and from the character table of $G_8$ we can compute the table \ref{g8_data}.
\begin{table}[H] 
\centering
\footnotesize
\begin{tabular}{|c|c|c|c|c|c|}
\hline
$\lambda$ & $\lambda(s)$ & $\lambda(s^2)$ & $\lambda(s^3)$ & $\omega_{\lambda}(\check{\mrm{eu}}_k)$ & ss \\ \hline \hline
$\phi_{1,0}$ & $1$ & $1$ & $1$ & $24k_{1,0} - 24k_{1,1}$ & n \\ \hline
$\phi_{1,6}$ & $\zeta$ & $-1$ & $-\zeta$ & $0$ & n \\ \hline
$\phi_{1,12}$ & $-1$ & $1$ & $-1$ & $24k_{1,0} - 24k_{1,3}$ & n \\ \hline
$\phi_{1,18}$ & $-\zeta$ & $-1$ & $\zeta$ & $24k_{1,0} - 24k_{1,2}$ & n \\ \hline
$\phi_{2,1}$ & $\zeta + 1$ & $0$ & $-\zeta + 1$ & $12k_{1,0} - 12k_{1,1}$ & n \\ \hline
$\phi_{2,4}$ & $0$ & $2$ & $0$ & $24k_{1,0} - 12k_{1,1} - 12k_{1,3}$ & n \\ \hline
$\phi_{2,7}'$ & $-\zeta + 1$ & $0$ & $\zeta + 1$ & $24k_{1,0} - 12k_{1,1} - 12k_{1,2}$ & n \\ \hline
$\phi_{2,7}''$ & $\zeta - 1$ & $0$ & $-\zeta - 1$ & $12k_{1,0} - 12k_{1,3}$ & n \\ \hline
$\phi_{2,10}$ & $0$ & $-2$ & $0$ & $12k_{1,0} - 12k_{1,2}$ & n \\ \hline
$\phi_{2,13}$ & $-\zeta - 1$ & $0$ & $\zeta - 1$ & $24k_{1,0} - 12k_{1,2} - 12k_{1,3}$ & n \\ \hline
$\phi_{3,8}$ & $-1$ & $-1$ & $-1$ & $16k_{1,0} - 8k_{1,2} - 8k_{1,3}$ & n \\ \hline
$\phi_{3,6}$ & $-\zeta$ & $1$ & $\zeta$ & $24k_{1,0} - 8k_{1,1} - 8k_{1,2} - 8k_{1,3}$ & n \\ \hline
$\phi_{3,4}$ & $1$ & $-1$ & $1$ & $16k_{1,0} - 8k_{1,1} - 8k_{1,2}$ & n \\ \hline
$\phi_{3,2}$ & $\zeta$ & $1$ & $-\zeta$ & $16k_{1,0} - 8k_{1,1} - 8k_{1,3}$ & n \\ \hline
$\phi_{4,5}$ & $0$ & $0$ & $0$ & $18k_{1,0} - 6k_{1,1} - 6k_{1,2} - 6k_{1,3}$ & y \\ \hline
$\phi_{4,3}$ & $0$ & $0$ & $0$ & $18k_{1,0} - 6k_{1,1} - 6k_{1,2} - 6k_{1,3}$ & y \\ \hline
\end{tabular}
\caption{Character data for $G_8$.} \label{g8_data}
\end{table}

\noindent In this table we see that there is only one non-singleton generic Euler family, namely $\lbrace \phi_{4,3}, \phi_{4,5} \rbrace$, and that this family is good. Hence, it is already a Calogero--Moser $k$-family for all $k \notin \Phi^{-1}\mrm{EuEx}(G_8)$ and therefore
\[
\mrm{Eu}_k = \mrm{CM}_k \quad \tn{for all } \quad k \notin \Phi^{-1}\mrm{EuEx}(G_8) \;.
\]

From table \ref{g8_data} we can also compute that $\Phi^{-1}\mrm{EuEx}(G_8)$ is the union of the following 37 hyperplanes, which form five orbits under the group $\rS_4$:
\begin{table}[H]
\centering
\footnotesize
\begin{minipage}[t]{0.38\linewidth}
\centering
\begin{tabular}[t]{|l|l|l|}
\hline
Label & Orbit & Length \\
\hline \hline
$1$ & $\rS_4.(0,0,1,-1)$ & 6 \\ \hline
$2$ & $\rS_4.(0,1,-2,1)$ & 12 \\ \hline
$3$ & $\rS_4.(1,-3,1,1)$ & 4 \\ \hline
\end{tabular}
\end{minipage}
\begin{minipage}[t]{0.38\linewidth}
\centering
\begin{tabular}[t]{|l|l|l|}
\hline
Label & Orbit & Length \\
\hline \hline
$4$ & $\rS_4.(1,-2,-2,3)$ & 12 \\ \hline
$5$ & $\rS_4.(1,-1,-1,1)$ & 3 \\ \hline
\end{tabular}
\end{minipage} \caption{$\Phi^{-1}\mrm{EuEx}(G_8)$.}
\end{table}

\noindent The operation $\sharp$ is described by the action of the cycle $(2,3) \in \rS_4$ so that all orbits and thus $\Phi^{-1}\mrm{EuEx}(G_8)$ are stable under $\sharp$. A comparison with the data in \cite{Chlouveraki.MGAP-functions-for-th} shows that the essential hyperplanes for $G_8$ are precisely the 24 hyperplanes in the orbits $1$, $2$, $3$, and in the suborbit $\Sigma_{ \lbrace \lbrace 1,3 \rbrace, \lbrace 2,4 \rbrace \rbrace}.(1,-1,-1,1)$ of the orbit $5$. Hence, $\mrm{RouEx}(G_8) \subs \Phi^{-1}\mrm{EuEx}(G_8)$. Furthermore, the generic Rouquier families coincide with the generic Calogero--Moser families. This proves \ref{main_theorem_2} for $G_8$. \qed
\end{paran}

\subsection{$G_{10}$}

\begin{paran}
The group $G_{10}$ is of order 288 and can be realized as the matrix group over $\bbQ(\zeta)$, $\zeta \dopgleich \zeta_{12}$, generated by the reflections
\[
s \dopgleich \begin{pmatrix} 1 & 0 \\ 0 & \zeta^4 \end{pmatrix}, \quad t \dopgleich \frac{1}{3} \begin{pmatrix} \zeta^3 - \zeta^2 + \zeta + 2 & \frac{1}{2} ( \zeta^3 + 2\zeta^2 - 2\zeta - 1) \\ \zeta^3 + 2\zeta^2 - 2\zeta - 1 & 2\zeta^3 + \zeta^2 - \zeta + 1 \end{pmatrix}
\]
There are five conjugacy classes of reflections: the ones of $s$ and $s^2$ (both of order $3$ and length $8$), the ones of $t$ and $t^3$ (both of order $4$ and length $6$), and the one of $t^2$ (of order $2$ and length $6$). Hence, there are two orbits of reflection hyperplanes, namely the orbit $\Omega_1$ of the reflection hyperplane of $s$ and the orbit $\Omega_2$ of the reflection hyperplane of $t$. The parameters for the restricted rational Cherednik algebra are therefore $k \dopgleich (k_{1,0},k_{1,1},k_{1,2}, k_{2,0},k_{2,1},k_{2,2},k_{2,3})$ and we can now compute that
\[ 
\begin{array}{lcl}
c_k(s) & = & (-\zeta^2 - 1)k_{1,0} + (-\zeta^2 + 2)k_{1,1} + (2\zeta^2 - 1)k_{1,2}\\
c_k(s^2) & = & (\zeta^2 - 2)k_{1,0} + (\zeta^2 + 1)k_{1,1} + (-2\zeta^2 + 1)k_{1,2} \\
c_k(t) & = & (-\zeta^3 - 1)k_{2,0} + (-\zeta^3 + 1)k_{2,1} + (\zeta^3 + 1)k_{2,2} + (\zeta^3 - 1)k_{2,3} \\
c_k(t^2) & = & -2k_{2,0} + 2k_{2,1} - 2k_{2,2} + 2k_{2,3} \\
c_k(t^3) & = & (\zeta^3 - 1)k_{2,0} + (\zeta^3 + 1)k_{2,1} + (-\zeta^3 + 1)k_{2,2} + (-\zeta^3 - 1)k_{2,3} \;.
\end{array}
\]
The group $G_{10}$ has 48 irreducible characters and from the character table we can compute the table \ref{g10_data} on page \pageref{g10_data}.
\begin{table}[!htbp]
\centering
\scriptsize
\begin{tabular}{|c|c|c|c|c|c|c|c|}
\hline
$\lambda$ & $\lambda(s)$ & $\lambda(s^2)$ & $\lambda(t)$ & $\lambda(t^2)$ & $\lambda(t^3)$ & $\omega_{\lambda}(\check{\mrm{eu}}_k)$ & ss \\ \hline \hline
$\phi_{1,0}$& $1$ & $1$ & $1$ & $1$ & $1$ & $24k_{1,0} - 24k_{1,1} + 24k_{2,0} - 24k_{2,1}$  & n \\ \hline
$\phi_{1,6}$& $1$ & $1$ & $\zeta^{3}$ & $-1$ & $-\zeta^{3}$ & $24k_{1,0} - 24k_{1,1}$  & n \\ \hline
$\phi_{1,12}$& $1$ & $1$ & $-1$ & $1$ & $-1$ & $24k_{1,0} - 24k_{1,1} + 24k_{2,0} - 24k_{2,3}$  & n \\ \hline
$\phi_{1,18}$& $1$ & $1$ & $-\zeta^{3}$ & $-1$ & $\zeta^{3}$ & $24k_{1,0} - 24k_{1,1} + 24k_{2,0} - 24k_{2,2}$  & n \\ \hline
$\phi_{1,8}$& $\zeta^{2} - 1$ & $-\zeta^{2}$ & $1$ & $1$ & $1$ & $24k_{2,0} - 24k_{2,1}$  & n \\ \hline
$\phi_{1,14}$& $\zeta^{2} - 1$ & $-\zeta^{2}$ & $\zeta^{3}$ & $-1$ & $-\zeta^{3}$ & $0$  & n \\ \hline
$\phi_{1,20}$& $\zeta^{2} - 1$ & $-\zeta^{2}$ & $-1$ & $1$ & $-1$ & $24k_{2,0} - 24k_{2,3}$  & n \\ \hline
$\phi_{1,26}$& $\zeta^{2} - 1$ & $-\zeta^{2}$ & $-\zeta^{3}$ & $-1$ & $\zeta^{3}$ & $24k_{2,0} - 24k_{2,2}$  & n \\ \hline
$\phi_{1,16}$& $-\zeta^{2}$ & $\zeta^{2} - 1$ & $1$ & $1$ & $1$ & $24k_{1,0} - 24k_{1,2} + 24k_{2,0} - 24k_{2,1}$  & n \\ \hline
$\phi_{1,22}$& $-\zeta^{2}$ & $\zeta^{2} - 1$ & $\zeta^{3}$ & $-1$ & $-\zeta^{3}$ & $24k_{1,0} - 24k_{1,2}$  & n \\ \hline
$\phi_{1,28}$& $-\zeta^{2}$ & $\zeta^{2} - 1$ & $-1$ & $1$ & $-1$ & $24k_{1,0} - 24k_{1,2} + 24k_{2,0} - 24k_{2,3}$  & n \\ \hline
$\phi_{1,34}$& $-\zeta^{2}$ & $\zeta^{2} - 1$ & $-\zeta^{3}$ & $-1$ & $\zeta^{3}$ & $24k_{1,0} - 24k_{1,2} + 24k_{2,0} - 24k_{2,2}$  & n \\ \hline
$\phi_{2,9}$& $-1$ & $-1$ & $\zeta^{3} + 1$ & $0$ & $-\zeta^{3} + 1$ & $12k_{1,0} - 12k_{1,2} + 12k_{2,0} - 12k_{2,1}$  & n \\ \hline
$\phi_{2,12}$& $-1$ & $-1$ & $0$ & $2$ & $0$ & $12k_{1,0} - 12k_{1,2} + 24k_{2,0} - 12k_{2,1} - 12k_{2,3}$  & n \\ \hline
$\phi_{2,15}'$& $-1$ & $-1$ & $-\zeta^{3} + 1$ & $0$ & $\zeta^{3} + 1$ & $12k_{1,0} - 12k_{1,2} + 24k_{2,0} - 12k_{2,1} - 12k_{2,2}$  & n \\ \hline
$\phi_{2,15}''$& $-1$ & $-1$ & $\zeta^{3} - 1$ & $0$ & $-\zeta^{3} - 1$ & $12k_{1,0} - 12k_{1,2} + 12k_{2,0} - 12k_{2,3}$  & n \\ \hline
$\phi_{2,18}$& $-1$ & $-1$ & $0$ & $-2$ & $0$ & $12k_{1,0} - 12k_{1,2} + 12k_{2,0} - 12k_{2,2}$  & n \\ \hline
$\phi_{2,21}$& $-1$ & $-1$ & $-\zeta^{3} - 1$ & $0$ & $\zeta^{3} - 1$ & $12k_{1,0} - 12k_{1,2} + 24k_{2,0} - 12k_{2,2} - 12k_{2,3}$  & n \\ \hline
$\phi_{2,5}$& $-\zeta^{2} + 1$ & $\zeta^{2}$ & $\zeta^{3} + 1$ & $0$ & $-\zeta^{3} + 1$ & $24k_{1,0} - 12k_{1,1} - 12k_{1,2} + 12k_{2,0} - 12k_{2,1}$  & n \\ \hline
$\phi_{2,8}$& $-\zeta^{2} + 1$ & $\zeta^{2}$ & $0$ & $2$ & $0$ & $24k_{1,0} - 12k_{1,1} - 12k_{1,2} + 24k_{2,0} - 12k_{2,1} - 12k_{2,3}$  & n \\ \hline
$\phi_{2,11}'$& $-\zeta^{2} + 1$ & $\zeta^{2}$ & $-\zeta^{3} + 1$ & $0$ & $\zeta^{3} + 1$ & $24k_{1,0} - 12k_{1,1} - 12k_{1,2} + 24k_{2,0} - 12k_{2,1} - 12k_{2,2}$  & n \\ 
\hline
$\phi_{2,11}''$& $-\zeta^{2} + 1$ & $\zeta^{2}$ & $\zeta^{3} - 1$ & $0$ & $-\zeta^{3} - 1$ & $24k_{1,0} - 12k_{1,1} - 12k_{1,2} + 12k_{2,0} - 12k_{2,3}$  & n \\ \hline
$\phi_{2,14}$& $-\zeta^{2} + 1$ & $\zeta^{2}$ & $0$ & $-2$ & $0$ & $24k_{1,0} - 12k_{1,1} - 12k_{1,2} + 12k_{2,0} - 12k_{2,2}$  & n \\ \hline
$\phi_{2,17}$& $-\zeta^{2} + 1$ & $\zeta^{2}$ & $-\zeta^{3} - 1$ & $0$ & $\zeta^{3} - 1$ & $24k_{1,0} - 12k_{1,1} - 12k_{1,2} + 24k_{2,0} - 12k_{2,2} - 12k_{2,3}$  & n \\ 
\hline
$\phi_{2,1}$& $\zeta^{2}$ & $-\zeta^{2} + 1$ & $\zeta^{3} + 1$ & $0$ & $-\zeta^{3} + 1$ & $12k_{1,0} - 12k_{1,1} + 12k_{2,0} - 12k_{2,1}$  & n \\ \hline
$\phi_{2,4}$& $\zeta^{2}$ & $-\zeta^{2} + 1$ & $0$ & $2$ & $0$ & $12k_{1,0} - 12k_{1,1} + 24k_{2,0} - 12k_{2,1} - 12k_{2,3}$  & n \\ \hline
$\phi_{2,7}'$& $\zeta^{2}$ & $-\zeta^{2} + 1$ & $-\zeta^{3} + 1$ & $0$ & $\zeta^{3} + 1$ & $12k_{1,0} - 12k_{1,1} + 24k_{2,0} - 12k_{2,1} - 12k_{2,2}$  & n \\ \hline
$\phi_{2,7}''$& $\zeta^{2}$ & $-\zeta^{2} + 1$ & $\zeta^{3} - 1$ & $0$ & $-\zeta^{3} - 1$ & $12k_{1,0} - 12k_{1,1} + 12k_{2,0} - 12k_{2,3}$  & n \\ \hline
$\phi_{2,10}$& $\zeta^{2}$ & $-\zeta^{2} + 1$ & $0$ & $-2$ & $0$ & $12k_{1,0} - 12k_{1,1} + 12k_{2,0} - 12k_{2,2}$  & n \\ \hline
$\phi_{2,13}$& $\zeta^{2}$ & $-\zeta^{2} + 1$ & $-\zeta^{3} - 1$ & $0$ & $\zeta^{3} - 1$ & $12k_{1,0} - 12k_{1,1} + 24k_{2,0} - 12k_{2,2} - 12k_{2,3}$  & n \\ \hline
$\phi_{3,8}''$& $0$ & $0$ & $-1$ & $-1$ & $-1$ & $16k_{1,0} - 8k_{1,1} - 8k_{1,2} + 16k_{2,0} - 8k_{2,2} - 8k_{2,3}$  & y \\ \hline
$\phi_{3,14}$& $0$ & $0$ & $-\zeta^{3}$ & $1$ & $\zeta^{3}$ & $16k_{1,0} - 8k_{1,1} - 8k_{1,2} + 24k_{2,0} - 8k_{2,1} - 8k_{2,2} - 8k_{2,3}$  & y \\ \hline
$\phi_{3,8}'$& $0$ & $0$ & $1$ & $-1$ & $1$ & $16k_{1,0} - 8k_{1,1} - 8k_{1,2} + 16k_{2,0} - 8k_{2,1} - 8k_{2,2}$  & y \\ \hline
$\phi_{3,2}$& $0$ & $0$ & $\zeta^{3}$ & $1$ & $-\zeta^{3}$ & $16k_{1,0} - 8k_{1,1} - 8k_{1,2} + 16k_{2,0} - 8k_{2,1} - 8k_{2,3}$  & y \\ \hline
$\phi_{3,16}$& $0$ & $0$ & $-1$ & $-1$ & $-1$ & $16k_{1,0} - 8k_{1,1} - 8k_{1,2} + 16k_{2,0} - 8k_{2,2} - 8k_{2,3}$  & y \\ \hline
$\phi_{3,10}'$& $0$ & $0$ & $-\zeta^{3}$ & $1$ & $\zeta^{3}$ & $16k_{1,0} - 8k_{1,1} - 8k_{1,2} + 24k_{2,0} - 8k_{2,1} - 8k_{2,2} - 8k_{2,3}$  & y \\ \hline
$\phi_{3,4}$& $0$ & $0$ & $1$ & $-1$ & $1$ & $16k_{1,0} - 8k_{1,1} - 8k_{1,2} + 16k_{2,0} - 8k_{2,1} - 8k_{2,2}$  & y \\ \hline
$\phi_{3,10}''$& $0$ & $0$ & $\zeta^{3}$ & $1$ & $-\zeta^{3}$ & $16k_{1,0} - 8k_{1,1} - 8k_{1,2} + 16k_{2,0} - 8k_{2,1} - 8k_{2,3}$  & y \\ \hline
$\phi_{3,12}'$& $0$ & $0$ & $-1$ & $-1$ & $-1$ & $16k_{1,0} - 8k_{1,1} - 8k_{1,2} + 16k_{2,0} - 8k_{2,2} - 8k_{2,3}$  & y \\ \hline
$\phi_{3,6}''$& $0$ & $0$ & $-\zeta^{3}$ & $1$ & $\zeta^{3}$ & $16k_{1,0} - 8k_{1,1} - 8k_{1,2} + 24k_{2,0} - 8k_{2,1} - 8k_{2,2} - 8k_{2,3}$  & y \\ \hline
$\phi_{3,12}''$& $0$ & $0$ & $1$ & $-1$ & $1$ & $16k_{1,0} - 8k_{1,1} - 8k_{1,2} + 16k_{2,0} - 8k_{2,1} - 8k_{2,2}$  & y \\ \hline
$\phi_{3,6}'$& $0$ & $0$ & $\zeta^{3}$ & $1$ & $-\zeta^{3}$ & $16k_{1,0} - 8k_{1,1} - 8k_{1,2} + 16k_{2,0} - 8k_{2,1} - 8k_{2,3}$  & y \\ \hline
$\phi_{4,9}$& $1$ & $1$ & $0$ & $0$ & $0$ & $18k_{1,0} - 12k_{1,1} - 6k_{1,2} + 18k_{2,0} - 6k_{2,1} - 6k_{2,2} - 6k_{2,3}$  & n \\ \hline
$\phi_{4,11}$& $\zeta^{2} - 1$ & $-\zeta^{2}$ & $0$ & $0$ & $0$ & $12k_{1,0} - 6k_{1,1} - 6k_{1,2} + 18k_{2,0} - 6k_{2,1} - 6k_{2,2} - 6k_{2,3}$  & n \\ \hline
$\phi_{4,7}$& $-\zeta^{2}$ & $\zeta^{2} - 1$ & $0$ & $0$ & $0$ & $18k_{1,0} - 6k_{1,1} - 12k_{1,2} + 18k_{2,0} - 6k_{2,1} - 6k_{2,2} - 6k_{2,3}$  & y \\ \hline
$\phi_{4,3}$& $1$ & $1$ & $0$ & $0$ & $0$ & $18k_{1,0} - 12k_{1,1} - 6k_{1,2} + 18k_{2,0} - 6k_{2,1} - 6k_{2,2} - 6k_{2,3}$  & y \\ \hline
$\phi_{4,5}$& $\zeta^{2} - 1$ & $-\zeta^{2}$ & $0$ & $0$ & $0$ & $12k_{1,0} - 6k_{1,1} - 6k_{1,2} + 18k_{2,0} - 6k_{2,1} - 6k_{2,2} - 6k_{2,3}$  & y \\ \hline
$\phi_{4,13}$& $-\zeta^{2}$ & $\zeta^{2} - 1$ & $0$ & $0$ & $0$ & $18k_{1,0} - 6k_{1,1} - 12k_{1,2} + 18k_{2,0} - 6k_{2,1} - 6k_{2,2} - 6k_{2,3}$  & n \\ \hline
\end{tabular} \caption{Character data for $G_{10}$.} \label{g10_data}
\end{table}
From this table we can deduce that the only non-singleton generic Euler families are
\[
\begin{array}{c}
\lbrace \phi_{3,8}'',\phi_{3,16},\phi_{3,12}'\rbrace, \lbrace \phi_{3,14},\phi_{3,10}',\phi_{3,6}''\rbrace, \lbrace \phi_{3,8}',\phi_{3,4},\phi_{3,12}''\rbrace, \lbrace 
\phi_{3,2},\phi_{3,10}'',\phi_{3,6}'\rbrace, \\
\lbrace \phi_{4,9},\phi_{4,3}\rbrace, \lbrace \phi_{4,11},\phi_{4,5}\rbrace, \lbrace \phi_{4,7},\phi_{4,13}\rbrace \;.
\end{array}
\]
A close look at the supersingularity column in table \ref{g10_data} now shows that all these families are good and therefore already Calogero--Moser $k$-families for all $k \notin \Phi^{-1}\mrm{EuEx}(G_{10})$ by \ref{trick}. Hence,
\[
\mrm{Eu}_k = \mrm{CM}_k \quad \tn{for all } \quad k \notin \Phi^{-1}\mrm{EuEx}(G_{10}) \;.
\]

From \ref{g10_data} we can also compute that $\Phi^{-1}\mrm{EuEx}(G_{10}$) is the union of the following 300 hyperplanes, which form 12 orbits under the Young subgroup $\Sigma \dopgleich \Sigma_{\lbrace \lbrace 1,2,3 \rbrace, \lbrace 4,5,6,7 \rbrace \rbrace}$ of $\rS_7$:

\begin{table}[H]
\centering
\footnotesize
\begin{minipage}[b]{0.45\linewidth}
\centering
\begin{tabular}{|l|l|l|}
\hline
Label & Orbit & Length \\
\hline \hline
$1$a & $\Sigma.(0, 0, 0, 0, 0, 1, -1)$ & $6$ \\
\hline
$1$b & $\Sigma.(0, 1, -1, 0, 0, 0, 0)$ & $3$ \\
\hline
$2$a & $\Sigma.(0, 1, -1, 0, 0, -1, 1)$ & $36$ \\
\hline
$2$b & $\Sigma.(0, 0, 0, 1, -1, -1, 1)$ & $3$ \\
\hline
$3$a & $\Sigma.(0, 1, -1, 0, -2, 1, 1)$ & $72$ \\
\hline
$3$b & $\Sigma.(1, -2, 1, 0, 0, -1, 1)$ & $36$ \\
\hline
\end{tabular}
\end{minipage}
\begin{minipage}[b]{0.45\linewidth}
\centering
\begin{tabular}{|l|l|l|}
\hline
Label & Orbit & Length \\
\hline \hline

$4$ & $\Sigma.(0, 1, -1, -1, -1, 1, 1)$ & $18$ \\
\hline
$5$ & $\Sigma.(1, -3, 2, -3, 1, 1, 1)$ & $24$ \\
\hline
$6$ & $\Sigma.(1, -2, 1, 0, -2, 1, 1)$ & $36$ \\
\hline
$7$ & $\Sigma.(1, -2, 1, -3, 1, 1, 1)$ & $12$ \\
\hline
$8$ & $\Sigma.(1, -2, 1, -2, -2, 1, 3)$ & $36$ \\
\hline
$9$ & $\Sigma.(1, -2, 1, -1, -1, 1, 1)$ & $18$ \\
\hline
\end{tabular} 
\end{minipage}
\caption{$\Phi^{-1}\mrm{EuEx}(G_{10})$.} 
\end{table}

\noindent The operation $\sharp$ is described by the action of the permutation $(2,3)(5,6) \in \Sigma$ so that all orbits and thus $\Phi^{-1}\mrm{EuEx}(G_{10})$ are stable under $\sharp$. A comparison with the data in \cite{Chlouveraki.MGAP-functions-for-th} shows that the essential hyperplanes of $G_{10}$ are precisely the 81 hyperplanes in the orbits $1$a, $1$b, $6$, $7$, the suborbit $\wt{\Sigma}.(0, 1, -1, 0, 0, -1, 1)$ of orbit $2$a, and the suborbit $\wt{\Sigma}.((0, 1, -1, -1, -1, 1, 1)$ of orbit $4$, where 
\[
\wt{\Sigma} \dopgleich \langle (1,3),(5,7), (1,3,2), (4,6)(5,7) \rangle \leq \Sigma \;.
\]
This shows that $\mrm{RouEx}(G_{10}) \subs \Phi^{-1}\mrm{EuEx}(G_{10})$. Furthermore, the generic Rouquier families coincide with the generic Calogero--Moser families. This proves \ref{main_theorem_2} for $G_{10}$.
\end{paran}

\subsection{$G_{23}$}

\begin{paran}
The group $G_{23}$ is of order 120 and is actually the Coxeter group of type $H_3$. It can be realized as the matrix group over $\bbQ(\zeta)$, where $\zeta \dopgleich \zeta_5$, generated by the reflections
\[
s \dopgleich \begin{pmatrix} -1 & 0 & 0 \\ \tau & 1 & 0 \\ 0 & 0 & 1 \end{pmatrix}, \quad t \dopgleich \begin{pmatrix} 1 & \tau & 0 \\ 0 & -1 & 0 \\ 0 & 1 & 1 \end{pmatrix}, \quad u \dopgleich \begin{pmatrix} 1 & 0 & 0 \\ 0 & 1 & 1 \\ 0 & 0 & -1 \end{pmatrix} \;,
\]
where $\tau \dopgleich -\zeta^3 - \zeta^2$. Note that $(2\tau - 1)^2 = 5$ and so $\tau \in \bbR$. There is just one conjugacy class of reflections, namely the one of $s$ (of order $2$ and length $15$). Hence, there is just one orbit $\Omega_1$ of reflection hyperplanes and so the parameters for the restricted rational Cherednik algebra are $k \dopgleich (k_{1,0}, k_{1,1})$. We can now compute that
\[ %
c_k(s) = -2k_{1,0} + 2k_{1,1} \;.
\]
The group $G_{23}$ has 10 irreducible characters and from the character table we can compute table \ref{g23_data}. 
\begin{table}[H]   
\centering 
\footnotesize
\begin{minipage}[b]{0.36\linewidth}
\centering 
\begin{tabular}{|c|c|c|c|}
\hline
$\lambda$ & $\lambda(s)$ & $\omega_{\lambda}(\check{\mrm{eu}}_k)$ & ss \\ \hline \hline
$\phi_{1,15}$& $-1$ & $0$  & n \\ \hline
$\phi_{1,0}$& $1$ & $30k_{1,0} - 30k_{1,1}$  & n \\ \hline
$\phi_{5,5}$& $-1$ & $12k_{1,0} - 12k_{1,1}$  & n \\ \hline
$\phi_{5,2}$& $1$ & $18k_{1,0} - 18k_{1,1}$  & n \\ \hline
$\phi_{3,6}$& $-1$ & $10k_{1,0} - 10k_{1,1}$  & y \\ \hline
\end{tabular}
\end{minipage}
\begin{minipage}[b]{0.36\linewidth}
\centering 
\begin{tabular}{|c|c|c|c|}
\hline
$\lambda$ & $\lambda(s)$ & $\omega_{\lambda}(\check{\mrm{eu}}_k)$ & ss \\ \hline \hline
$\phi_{3,8}$& $-1$ & $10k_{1,0} - 10k_{1,1}$  & n \\ \hline
$\phi_{3,1}$& $1$ & $20k_{1,0} - 20k_{1,1}$  & y \\ \hline
$\phi_{3,3}$& $1$ & $20k_{1,0} - 20k_{1,1}$  & n \\ \hline
$\phi_{4,3}$& $0$ & $15k_{1,0} - 15k_{1,1}$  & y \\ \hline
$\phi_{4,4}$& $0$ & $15k_{1,0} - 15k_{1,1}$  & y \\ \hline
\end{tabular} 
\end{minipage} \caption{Character data for $G_{23}$.} \label{g23_data}
\end{table}
\noindent From this table we deduce that the the non-singleton generic Euler families are
\[
\lbrace \phi_{3,6},\phi_{3,8}\rbrace, \lbrace \phi_{3,1},\phi_{3,3}\rbrace, \lbrace \phi_{4,3},\phi_{4,4}\rbrace 
\]
and that all these families are good so that they are already Calogero--Moser $k$-families for all $k \notin \Phi^{-1}\mrm{EuEx}(G_{23})$. Hence,
\[
\mrm{Eu}_k = \mrm{CM}_k \quad \tn{for all } \quad k \notin \Phi^{-1}\mrm{EuEx}(G_{23}) \;.
\]

From table \ref{g23_data} we can also compute that $\Phi^{-1}\mrm{EuEx}(G_{23}$) just consists of the single hyperplane defined by 
\[
k_{1,0} - k_{1,1} \;.
\]
This hyperplane is clearly stable under the operation $\sharp$. A comparison with the data in \cite{Chlouveraki.MGAP-functions-for-th} shows that this is also precisely the essential hyperplane for $G_{23}$ and that the generic Rouquier families coincide with the the generic Calogero--Moser families. This proves \ref{main_theorem_2} for $G_{23}$. 
\end{paran}

\subsection{$G_{24}$}

\begin{paran}
The group $G_{24}$ is of order 336 and can be realized as the matrix group over $\bbQ(\zeta)$, where $\zeta \dopgleich \zeta_7$, generated by the reflections

\[ 
s \dopgleich \begin{pmatrix} -1 & 1 & \tau \\ 0 & 1 & 0 \\ 0 & 0 & 1 \end{pmatrix}, \quad t \dopgleich \begin{pmatrix} 1 & 0 & 0 \\ 1 & -1 & 1 \\ 0 & 0 & 1 \end{pmatrix}, \quad u \dopgleich \begin{pmatrix} 1 & 0 & 0 \\ 0 & 1 & 0 \\ -\tau-1 & 1 & -1 \end{pmatrix} \;,
\]
where $\tau \dopgleich \zeta^4 + \zeta^2 + \zeta$.
We note that we have not given the same realization as in \cite{CHEVIE-JM-4} as this one contains a lot of denominators and is thus not nice to write down. This different choice causes no troubles however as all irreducible characters of $G_{24}$ are already uniquely determined by their $(d,b)$-pair. 
 
There is just one conjugacy class of reflections, namely the one of $s$ (of order $2$ and length $21$) and so there is just one orbit $\Omega_1$ of reflection hyperplanes. The parameters for the restricted rational Cherednik algebra are therefore $k \dopgleich (k_{1,0},k_{1,1})$ and we can now compute that
\[
c_k(s) = -2k_{1,0} + 2k_{1,1} \;.
\]
The group $G_{26}$ has 12 irreducible characters and from the character table we can compute the table \ref{g24_data}.
\begin{table}[H] 
\centering
\footnotesize
\begin{minipage}[b]{0.36\linewidth}
\centering
\begin{tabular}{|c|c|c|c|}
\hline
$\lambda$ & $\lambda(s)$ & $\omega_{\lambda}(\check{\mrm{eu}}_k)$ & ss\\ \hline \hline
$\phi_{1,0}$& $1$ & $42k_{1,0} - 42k_{1,1}$  & n \\ \hline
$\phi_{1,21}$& $-1$ & $0$  & n \\ \hline
$\phi_{3,10}$& $-1$ & $14k_{1,0} - 14k_{1,1}$  & y \\ \hline
$\phi_{3,3}$& $1$ & $28k_{1,0} - 28k_{1,1}$  & y \\ \hline
$\phi_{3,1}$& $1$ & $28k_{1,0} - 28k_{1,1}$  & y \\ \hline
$\phi_{3,8}$& $-1$ & $14k_{1,0} - 14k_{1,1}$  & y \\ \hline
\end{tabular}
\end{minipage}
\begin{minipage}[b]{0.36\linewidth}
\centering
\begin{tabular}{|c|c|c|c|}
\hline
$\lambda$ & $\lambda(s)$ & $\omega_{\lambda}(\check{\mrm{eu}}_k)$ & ss\\ \hline \hline
$\phi_{6,2}$& $2$ & $28k_{1,0} - 28k_{1,1}$  & y \\ \hline
$\phi_{6,9}$& $-2$ & $14k_{1,0} - 14k_{1,1}$  & y \\ \hline
$\phi_{7,3}$& $1$ & $24k_{1,0} - 24k_{1,1}$  & n \\ \hline
$\phi_{7,6}$& $-1$ & $18k_{1,0} - 18k_{1,1}$  & n \\ \hline
$\phi_{8,5}$& $0$ & $21k_{1,0} - 21k_{1,1}$  & y \\ \hline
$\phi_{8,4}$& $0$ & $21k_{1,0} - 21k_{1,1}$  & y \\ \hline
\end{tabular}
\end{minipage}
\caption{Character data for $G_{24}$.} \label{g24_data}
\end{table}
\noindent From this table we see that the non-singleton generic Euler families are
\[
\lbrace \phi_{3,8},\phi_{3,10},\phi_{6,9} \rbrace, \lbrace \phi_{3,1},\phi_{3,3}, \phi_{6,2} \rbrace,  \lbrace \phi_{8,4} , \phi_{8,5} \rbrace 
\]
and that all these families are good. Hence, 
\[
\mrm{Eu}_k = \mrm{CM}_k \quad \tn{for all} \quad k \notin \Phi^{-1}\mrm{EuEx}(G_{24}) \;.
\]

From table \ref{g24_data} we can also easily compute that $\Phi^{-1}\mrm{EuEx}(G_{24})$ just consists of the single hyperplane defined by
\[
k_{1,0} - k_{1,1} \;.
\]
This hyperplane is clearly stable under the operation $\sharp$. A comparison with the data in \cite{Chlouveraki.MGAP-functions-for-th} shows that this is also the unique essential hyperplane for $G_{24}$ and that the generic Rouquier families coincide with the just determined generic Calogero--Moser families. This proves \ref{main_theorem_2} for $G_{24}$. \qed
\end{paran}

\subsection{$G_{26}$} \label{g26}

\begin{paran}
The group $G_{26}$ is of order 1296 and can be realized as the matrix group over $\bbQ(\zeta)$, where $\zeta \dopgleich \zeta_3$, generated by the reflections

\[
s \dopgleich \begin{pmatrix} 
1 & 0 & 0 \\ 
0 & 0 & 1 \\ 
0 & 1 & 0
\end{pmatrix}, \quad
t \dopgleich \begin{pmatrix} 
1 & 0 & 0 \\ 
0 & 1 & 0 \\ 
0 & 0 & \zeta
\end{pmatrix}, \quad
u \dopgleich \frac{1}{3} \begin{pmatrix} 
\zeta + 2 & \zeta-1 & \zeta-1 \\
\zeta-1 & \zeta+2 & \zeta-1 \\
\zeta-1 & \zeta-1 & \zeta+2
\end{pmatrix} \;.
\]
There are three conjugacy classes of reflections: the one of $s$ (of order $2$ and length $9$), and the ones of $t$ and $t^2$ (both of order $3$ and length $12$). 
Hence, there are two orbits of reflection hyperplanes, namely the orbit $\Omega_1$ of the reflection hyperplane of $s$ and the orbit $\Omega_2$ of the reflection hyperplane of $t$. The parameters for the restricted rational Cherednik algebra are therefore $k\dopgleich(k_{1,0},k_{1,1},k_{2,0},k_{2,1},k_{2,2})$ and we can now compute that
\[ 
\begin{array}{lcl}
c_k(s) & = & 2k_{1,0} + 2k_{1,1} \;, \\
c_k(t) & = & (-\zeta - 2)k_{2,0} + (-\zeta + 1)k_{2,1} + (2\zeta + 1)k_{2,2} \;, \\
c_k(t^2) & = & (\zeta - 1)k_{2,0} + (\zeta + 2)k_{2,1} + (-2\zeta - 1)k_{2,2} \;.
\end{array}
\]
The group $G_{26}$ has 48 irreducible characters and from the character table we can compute the table \ref{g26_data} on page \pageref{g26_data}.
\begin{table}[!htbp] 
\centering
\footnotesize
\begin{tabular}{|c|c|c|c|c|c|}
\hline
$\lambda$ & $\lambda(s)$ & $\lambda(t)$ & $\lambda(t^2)$ & $\omega_{\lambda}(\check{\mrm{eu}}_k)$ & ss \\ \hline \hline
$\phi_{1,0}$& $1$ & $1$ & $1$ & $18k_{1,0} - 18k_{1,1} + 36k_{2,0} - 36k_{2,1}$  & n \\ \hline
$\phi_{1,9}$& $-1$ & $1$ & $1$ & $36k_{2,0} - 36k_{2,1}$  & n \\ \hline
$\phi_{1,33}$& $-1$ & $-\zeta - 1$ & $\zeta$ & $36k_{2,0} - 36k_{2,2}$  & n \\ \hline
$\phi_{1,21}$& $-1$ & $\zeta$ & $-\zeta - 1$ & $0$  & n \\ \hline
$\phi_{1,24}$& $1$ & $-\zeta - 1$ & $\zeta$ & $18k_{1,0} - 18k_{1,1} + 36k_{2,0} - 36k_{2,2}$  & n \\ \hline
$\phi_{1,12}$& $1$ & $\zeta$ & $-\zeta - 1$ & $18k_{1,0} - 18k_{1,1}$  & n \\ \hline
$\phi_{2,24}$& $-2$ & $-1$ & $-1$ & $18k_{2,0} - 18k_{2,2}$  & n \\ \hline
$\phi_{2,15}$& $2$ & $-1$ & $-1$ & $18k_{1,0} - 18k_{1,1} + 18k_{2,0} - 18k_{2,2}$  & n \\ \hline
$\phi_{2,12}$& $-2$ & $\zeta + 1$ & $-\zeta$ & $18k_{2,0} - 18k_{2,1}$  & n \\ \hline
$\phi_{2,3}$& $2$ & $\zeta + 1$ & $-\zeta$ & $18k_{1,0} - 18k_{1,1} + 18k_{2,0} - 18k_{2,1}$  & n \\ \hline
$\phi_{2,18}$& $-2$ & $-\zeta$ & $\zeta + 1$ & $36k_{2,0} - 18k_{2,1} - 18k_{2,2}$  & n \\ \hline
$\phi_{2,9}$& $2$ & $-\zeta$ & $\zeta + 1$ & $18k_{1,0} - 18k_{1,1} + 36k_{2,0} - 18k_{2,1} - 18k_{2,2}$  & n \\ \hline
$\phi_{3,6}$& $3$ & $0$ & $0$ & $18k_{1,0} - 18k_{1,1} + 24k_{2,0} - 12k_{2,1} - 12k_{2,2}$  & n \\ \hline
$\phi_{3,15}$& $-3$ & $0$ & $0$ & $24k_{2,0} - 12k_{2,1} - 12k_{2,2}$  & n \\ \hline
$\phi_{3,8}''$& $-1$ & $2\zeta + 1$ & $-2\zeta - 1$ & $6k_{1,0} - 6k_{1,1} + 12k_{2,0} - 12k_{2,1}$  & n \\ \hline
$\phi_{3,5}''$& $1$ & $2\zeta + 1$ & $-2\zeta - 1$ & $12k_{1,0} - 12k_{1,1} + 12k_{2,0} - 12k_{2,1}$  & n \\ \hline
$\phi_{3,8}'$& $-1$ & $-\zeta + 1$ & $\zeta + 2$ & $6k_{1,0} - 6k_{1,1} + 36k_{2,0} - 24k_{2,1} - 12k_{2,2}$  & n \\ \hline
$\phi_{3,5}'$& $1$ & $-\zeta + 1$ & $\zeta + 2$ & $12k_{1,0} - 12k_{1,1} + 36k_{2,0} - 24k_{2,1} - 12k_{2,2}$  & n \\ \hline
$\phi_{3,20}$& $-1$ & $-\zeta - 2$ & $\zeta - 1$ & $6k_{1,0} - 6k_{1,1} + 24k_{2,0} - 24k_{2,2}$  & n \\ \hline
$\phi_{3,17}$& $1$ & $-\zeta - 2$ & $\zeta - 1$ & $12k_{1,0} - 12k_{1,1} + 24k_{2,0} - 24k_{2,2}$  & n \\ \hline
$\phi_{3,16}''$& $-1$ & $-2\zeta - 1$ & $2\zeta + 1$ & $6k_{1,0} - 6k_{1,1} + 36k_{2,0} - 12k_{2,1} - 24k_{2,2}$  & n \\ \hline
$\phi_{3,13}''$& $1$ & $-2\zeta - 1$ & $2\zeta + 1$ & $12k_{1,0} - 12k_{1,1} + 36k_{2,0} - 12k_{2,1} - 24k_{2,2}$  & n \\ \hline
$\phi_{3,4}$& $-1$ & $\zeta + 2$ & $-\zeta + 1$ & $6k_{1,0} - 6k_{1,1} + 24k_{2,0} - 24k_{2,1}$  & n \\ \hline
$\phi_{3,1}$& $1$ & $\zeta + 2$ & $-\zeta + 1$ & $12k_{1,0} - 12k_{1,1} + 24k_{2,0} - 24k_{2,1}$  & n \\ \hline
$\phi_{3,16}'$& $-1$ & $\zeta - 1$ & $-\zeta - 2$ & $6k_{1,0} - 6k_{1,1} + 12k_{2,0} - 12k_{2,2}$  & n \\ \hline
$\phi_{3,13}'$& $1$ & $\zeta - 1$ & $-\zeta - 2$ & $12k_{1,0} - 12k_{1,1} + 12k_{2,0} - 12k_{2,2}$  & n \\ \hline
$\phi_{6,8}''$& $2$ & $-2\zeta - 1$ & $2\zeta + 1$ & $12k_{1,0} - 12k_{1,1} + 30k_{2,0} - 12k_{2,1} - 18k_{2,2}$  & n \\ \hline
$\phi_{6,11}''$& $-2$ & $-2\zeta - 1$ & $2\zeta + 1$ & $6k_{1,0} - 6k_{1,1} + 30k_{2,0} - 12k_{2,1} - 18k_{2,2}$  & n \\ \hline
$\phi_{6,8}'$& $2$ & $\zeta - 1$ & $-\zeta - 2$ & $12k_{1,0} - 12k_{1,1} + 18k_{2,0} - 6k_{2,1} - 12k_{2,2}$  & n \\ \hline
$\phi_{6,11}'$& $-2$ & $\zeta - 1$ & $-\zeta - 2$ & $6k_{1,0} - 6k_{1,1} + 18k_{2,0} - 6k_{2,1} - 12k_{2,2}$  & n \\ \hline
$\phi_{6,2}$& $2$ & $\zeta + 2$ & $-\zeta + 1$ & $12k_{1,0} - 12k_{1,1} + 24k_{2,0} - 18k_{2,1} - 6k_{2,2}$  & n \\ \hline
$\phi_{6,5}$& $-2$ & $\zeta + 2$ & $-\zeta + 1$ & $6k_{1,0} - 6k_{1,1} + 24k_{2,0} - 18k_{2,1} - 6k_{2,2}$  & n \\ \hline
$\phi_{6,4}''$& $2$ & $2\zeta + 1$ & $-2\zeta - 1$ & $12k_{1,0} - 12k_{1,1} + 18k_{2,0} - 12k_{2,1} - 6k_{2,2}$  & n \\ \hline
$\phi_{6,7}''$& $-2$ & $2\zeta + 1$ & $-2\zeta - 1$ & $6k_{1,0} - 6k_{1,1} + 18k_{2,0} - 12k_{2,1} - 6k_{2,2}$  & n \\ \hline
$\phi_{6,10}$& $2$ & $-\zeta - 2$ & $\zeta - 1$ & $12k_{1,0} - 12k_{1,1} + 24k_{2,0} - 6k_{2,1} - 18k_{2,2}$  & n \\ \hline
$\phi_{6,13}$& $-2$ & $-\zeta - 2$ & $\zeta - 1$ & $6k_{1,0} - 6k_{1,1} + 24k_{2,0} - 6k_{2,1} - 18k_{2,2}$  & n \\ \hline
$\phi_{6,4}'$& $2$ & $-\zeta + 1$ & $\zeta + 2$ & $12k_{1,0} - 12k_{1,1} + 30k_{2,0} - 18k_{2,1} - 12k_{2,2}$  & n \\ \hline
$\phi_{6,7}'$& $-2$ & $-\zeta + 1$ & $\zeta + 2$ & $6k_{1,0} - 6k_{1,1} + 30k_{2,0} - 18k_{2,1} - 12k_{2,2}$  & n \\ \hline
$\phi_{8,6}'$& $0$ & $2$ & $2$ & $9k_{1,0} - 9k_{1,1} + 27k_{2,0} - 18k_{2,1} - 9k_{2,2}$  & y \\ \hline
$\phi_{8,3}$& $0$ & $2$ & $2$ & $9k_{1,0} - 9k_{1,1} + 27k_{2,0} - 18k_{2,1} - 9k_{2,2}$  & y \\ \hline
$\phi_{8,9}''$& $0$ & $-2\zeta - 2$ & $2\zeta$ & $9k_{1,0} - 9k_{1,1} + 27k_{2,0} - 9k_{2,1} - 18k_{2,2}$  & y \\ \hline
$\phi_{8,12}$& $0$ & $-2\zeta - 2$ & $2\zeta$ & $9k_{1,0} - 9k_{1,1} + 27k_{2,0} - 9k_{2,1} - 18k_{2,2}$  & y \\ \hline
$\phi_{8,6}''$& $0$ & $2\zeta$ & $-2\zeta - 2$ & $9k_{1,0} - 9k_{1,1} + 18k_{2,0} - 9k_{2,1} - 9k_{2,2}$  & y \\ \hline
$\phi_{8,9}'$& $0$ & $2\zeta$ & $-2\zeta - 2$ & $9k_{1,0} - 9k_{1,1} + 18k_{2,0} - 9k_{2,1} - 9k_{2,2}$  & y \\ \hline
$\phi_{9,8}$& $-3$ & $0$ & $0$ & $6k_{1,0} - 6k_{1,1} + 24k_{2,0} - 12k_{2,1} - 12k_{2,2}$  & y \\ \hline
$\phi_{9,5}$& $3$ & $0$ & $0$ & $12k_{1,0} - 12k_{1,1} + 24k_{2,0} - 12k_{2,1} - 12k_{2,2}$  & y \\ \hline
$\phi_{9,10}$& $-3$ & $0$ & $0$ & $6k_{1,0} - 6k_{1,1} + 24k_{2,0} - 12k_{2,1} - 12k_{2,2}$  & y \\ \hline
$\phi_{9,7}$& $3$ & $0$ & $0$ & $12k_{1,0} - 12k_{1,1} + 24k_{2,0} - 12k_{2,1} - 12k_{2,2}$  & y \\ \hline
\end{tabular}
\caption{Character data for $G_{26}$.} \label{g26_data}
\end{table}
From this table we deduce that the non-singleton generic Euler families are
\[
\lbrace \phi_{8,6}',\phi_{8,3}\rbrace, \lbrace 
\phi_{8,9}'',\phi_{8,12}\rbrace, \lbrace \phi_{8,6}'',\phi_{8,9}'\rbrace, \lbrace \phi_{9,8},\phi_{9,10}\rbrace, \lbrace \phi_{9,5},\phi_{9,7}\rbrace 
\]
and that all these families are good. Hence,
\[
\mrm{Eu}_k = \mrm{CM}_k \quad \tn{for all } \quad k \notin \Phi^{-1}\mrm{EuEx}(G_{26}) \;.
\]

From table \ref{g26_data} we can furthermore compute that $\Phi^{-1}\mrm{EuEx}(G_{26})$ is the union of the 169 hyperplanes listed in table \ref{g26_planes} on page \pageref{g26_planes}, which form 22 orbits under the Young subgroup $\Sigma \dopgleich \Sigma_{ \lbrace \lbrace 1,2 \rbrace, \lbrace 3,4,5\rbrace \rbrace}$ of $\rS_5$.
\begin{table}[H] \footnotesize
\centering
\begin{minipage}[b]{0.38\linewidth}
\centering
\begin{tabular}{|l|l|l|}
\hline
Label & Orbit & Length \\
\hline \hline
1a & $\Sigma.(0, 0, 0, 1, -1)$ & $3$ \\ \hline
1b & $\Sigma.(1, -1, 0, 0, 0)$ & $1$ \\ \hline
2 & $\Sigma.(0, 0, 1, -3, 2)$ & $6$ \\ \hline
3& $\Sigma.(0, 0, 1, -2, 1)$ & $3$ \\ \hline
4 & $\Sigma.(1, -1, 0, -4, 4)$ & $6$ \\ \hline
5 & $\Sigma.(1, -1, 0, -3, 3)$ & $6$ \\ \hline
6a & $\Sigma.(1, -1, 0, -2, 2)$ & $6$ \\ \hline
6b & $\Sigma.(2, -2, 0, -1, 1)$ & $6$ \\ \hline
7 & $\Sigma.(1, -1, 0, -1, 1)$ & $6$ \\ \hline
8 & $\Sigma.(1, -1, -6, 1, 5)$ & $12$ \\ \hline
9 & $\Sigma.(1, -1, -6, 2, 4)$ & $12$ \\ \hline
\end{tabular}
\end{minipage}
\begin{minipage}[b]{0.38\linewidth}
\centering
\begin{tabular}{|l|l|l|}
\hline
Label & Orbit & Length \\
\hline \hline
10 & $\Sigma.(1, -1, -5, 2, 3)$ & $12$ \\ \hline
11 & $\Sigma.(1, -1, -4, 1, 3)$ & $12$ \\ \hline
12 & $\Sigma.(1, -1, -4, 2, 2)$ & $6$ \\ \hline
13 & $\Sigma.(1, -1, -3, 1, 2)$ & $12$ \\ \hline
14 & $\Sigma.(1, -1, -2, 1, 1)$ & $6$ \\ \hline
15 & $\Sigma.(2, -2, -5, 2, 3)$ & $12$ \\ \hline
16 & $\Sigma.(2, -2, -4, 1, 3)$ & $12$ \\ \hline
17 & $\Sigma.(2, -2, -3, 1, 2)$ & $12$ \\ \hline
18 & $\Sigma.(2, -2, -2, 1, 1)$ & $6$ \\ \hline
19 & $\Sigma.(3, -3, -4, 2, 2)$ & $6$ \\ \hline
20 & $\Sigma.(3, -3, -2, 1, 1)$ & $6$ \\ \hline
\end{tabular}
\end{minipage} \caption{$\Phi^{-1}\mrm{EuEx}(G_{26})$. Here, $\Sigma \dopgleich \Sigma_{ \lbrace \lbrace 1,2 \rbrace, \lbrace 3,4,5\rbrace \rbrace}$.} \label{g26_planes}
\end{table}
The operation $\sharp$ is described by the action of the cycle $(4,5) \in \Sigma$ so that all orbits and thus $\Phi^{-1}(\mrm{EuEx}(G_{26})$ are stable under $\sharp$. 
A comparison with the data in \cite{Chlouveraki.MGAP-functions-for-th} shows that the 31 hyperplanes in the orbits $1$a, $1$b, $3$, $7$, $13$, and $14$, are precisely the essential hyperplanes of $G_{26}$ and that the generic Rouquier families coincide with the generic Calogero--Moser families. This proves \ref{main_theorem_2} for $G_{26}$. \qed\end{paran}

\section{Conclusions}

\begin{paran} \label{euler_coarser}
For all the exceptional groups 
\[
W \in \lbrace G_4,G_5,G_6,G_8,G_{10}, G_{23}=H_3, G_{24},G_{25},G_{26} \rbrace
\]
considered in \S3 we have proven that 
\[
\mrm{Eu}_k = \mrm{CM}_k \quad \tn{for all} \quad k \notin \Phi^{-1}\mrm{EuEx}(W) \;.
\]
As we covered quite a variety of groups, this might---if we are very optimistic---suggest that this equality always holds. This is (un)fortunately not true however as we will see for the symmetric group $W \dopgleich \rS_6$.\footnote{I would like to thank Maria Chlouveraki for pointing me towards this example.} To be able to write down the critical characters also in terms of partitions (and not only in the form $\phi_{d,b})$ we have to choose one of the two reflection representations of $W$ corresponding to the partitions $(5,1)$ and $(2,2,2)$ of $6$, which are interchanged by the outer automorphism of $\rS_6$. We choose  the first one and now consider the reflection group $\Gamma$ defined in this way. The reflections in $\Gamma$ are then precisely the transpositions in $W$. These form a single conjugacy class of order $2$ and so there is just one orbit $\Omega_1$ of reflection hyperplanes. Therefore the variables for the Cherednik algebra of $\Gamma$ are $k \dopgleich (k_{1,0}, k_{1,1})$ and we can now compute that
\[
c_k(s) = -2k_{1,0} + 2k_{1,1} \;,
\]
where $s$ is any reflection in $\Gamma$. The group $W$ has 11 irreducible characters and from the character table of $W$ we can easily compute table \ref{s6_data}. 
\begin{table}[H] 
\centering
\footnotesize
\begin{minipage}[t]{0.50\linewidth}
\centering
\begin{tabular}[t]{|c|c|c|}
\hline
$\lambda$ & $\lambda(s)$ & $\omega_{\lambda}(\check{\mrm{eu}}_k)$ \\ \hline \hline
$\phi_{1,1} = (6)$ & $1$ & $30k_{1,0} - 30k_{1,1}$ \\ \hline
$\phi_{1,15} = (1,1,1,1,1,1)$ & $-1$ & $0$  \\ \hline
$\phi_{5,3} = (3,3)$ & $1$ & $18k_{1,0} - 18k_{1,1}$ \\\hline
$\phi_{5,1} = (5,1)$ & $3$ & $24k_{1,0} - 24k_{1,1}$ \\\hline
$\phi_{5,6} = (2,2,2)$ & $-1$ & $12k_{1,0} - 12k_{1,1}$ \\\hline
$\phi_{5,10} = (2,1,1,1,1)$ & $-3$ & $6k_{1,0} - 6k_{1,1}$ \\\hline
\end{tabular}
\end{minipage}
\begin{minipage}[t]{0.45\linewidth}
\centering
\begin{tabular}[t]{|c|c|c|}
\hline
$\lambda$ & $\lambda(s)$ & $\omega_{\lambda}(\check{\mrm{eu}}_k)$ \\ \hline \hline
$\phi_{9,2} = (4,2)$ & $3$ & $20k_{1,0} - 20k_{1,1}$ \\\hline
$\phi_{9,7} = (2,2,1,1)$ & $-3$ & $10k_{1,0} - 10k_{1,1}$ \\\hline
$\phi_{10,3} = (4,1,1)$ & $2$ & $18k_{1,0} - 18k_{1,1}$ \\\hline
$\phi_{10,6} = (3,1,1,1)$ & $-2$ & $12k_{1,0} - 12k_{1,1}$ \\\hline
$\phi_{10,4}  =(3,2,1)$ & $0$ & $15k_{1,0} - 15k_{1,1}$ \\ \hline
\end{tabular}
\end{minipage}
\caption{Data for $\rS_6$.} \label{s6_data}
\end{table}
\noindent From this table we see that there are two non-singleton generic Euler families, namely 
$\lbrace \phi_{5,3}, \phi_{10,3} \rbrace$ and $\lbrace \phi_{5,6}, \phi_{10,6} \rbrace$. However, it follows from \cite{GorMar-Calogero-Moser-space-rest-0} that the generic Calogero--Moser families for $\rS_6$ are singletons and so $\mrm{CM}_k$ is strictly finer than $\mrm{Eu}_k$ for generic parameters $k$.

We note that the smallest example in the series $G(m,1,n)$ where this occurs is actually the group $G(2,1,4) = B_4$, which is of order 384. 
\end{paran}

\begin{question}
Our discussion leads to the following questions:
\begin{enum_thm}
\item What is the actual relationship between generic Calogero--Moser families, generic Rouquier families and generic Euler families?
\item What is the abstract explanation for the failure of Martino's generic parameter conjecture for the group $G_{25}$? What is so special about this group?
\item What is the relationship between $\mrm{EuEx}(\Gamma)$ and $\mrm{CMEx}(\Gamma)$. Is $\mrm{CMEx}(\Gamma)$ contained in $\mrm{EuEx}(\Gamma)$?
\item What is the relationship between $(\Phi^{-1}\mrm{CMGen}(\Gamma))^\sharp$ and $\mrm{RouGen}(\Gamma)$? Are they equal?
\item \textit{All} results so far about Martino's conjecture (including those for the infinite series) are obtained by determining the Calogero--Moser families and then comparing them with the Rouquier families. Is there a \textit{structural} explanation for the positive results on Martino's conjecture?
\end{enum_thm} 
\end{question}

\thispagestyle{pagenumonly}

\ifthenelse{\boolean{printbib}}
{
    \ifthenelse{\boolean{book}}
    {
        \cleardoublepage
        \phantomsection
    }
    {}
    \ifthenelse{\boolean{lang_ger}}
    {
        \ifthenelse{\boolean{book}}
        {
            \addcontentsline{toc}{chapter}{Literatur}
        }
        {}
    }
    {}
    \ifthenelse{\boolean{lang_eng}}
    {
        \ifthenelse{\boolean{book}}
        {
            \addcontentsline{toc}{chapter}{Bibliography}
        }
        {}
    }
    {}
    \pagestyle{pagenumonly}
    \printbibliography
    \ifthenelse{\boolean{book}}
    {
        \pagestyle{chapteronly}
    }
    {}
   
}
{}
\ifthenelse{\boolean{printbiburlremark}}
{
    \noindent \small \textbf{Remark.} Whenever possible, URLs to electronic versions (preferably freely available preprints) of the references are given. Note however, that these might differ from the published versions.
}
{}

\ifthenelse{\boolean{makeidx}}
{
    \pagestyle{empty}
    \ifthenelse{\boolean{book}}
    {
    	\cleardoublepage
        \phantomsection
        \addcontentsline{toc}{chapter}{Index}
    }
    {}
    \ifthenelse{\boolean{article}}
    {
    	\clearpage
    	\phantomsection
        \addcontentsline{toc}{section}{\textbf{Index}}
    }
    {}
    \printindex
    \pagestyle{chapteronly}
}{}

\thispagestyle{pagenumonly}

\end{document}